\newtheorem{thm}{Theorem}
\newtheorem{cor}[thm]{Corollary}
\newtheorem{lem}[thm]{Lemma}
\newtheorem{prop}[thm]{Proposition}
\newtheorem*{defn}{Definition}
\theoremstyle{remark}
\newtheorem*{remark}{Remark}
\numberwithin{equation}{section}
\numberwithin{thm}{section}
\newenvironment{packed_item}{
\begin{itemize}
  \setlength{\itemsep}{5pt}
  \setlength{\parskip}{0pt}
  \setlength{\parsep}{0pt}
}{\end{itemize}}
\DeclareMathOperator{\spc}{ }
\DeclareMathOperator{\Gal}{Gal}
\DeclareMathOperator{\Hom}{Hom}
\DeclareMathOperator{\End}{End}
\DeclareMathOperator{\Aut}{Aut}
\newcommand{\To}{\longrightarrow}
\newcommand{\tab}{\spc\spc\spc\spc\spc\spc\spc\spc}
\newcommand{\C}{\mathbb{C}}
\newcommand{\Q}{\mathbb{Q}}
\newcommand{\Z}{\mathbb{Z}}
\newcommand{\set}[1]{\left\{#1\right\}}
\newcommand{\setst}[2]{\left\{\begin{array}{l|r} #1 & #2 \end{array}\right\}}
\newcommand{\inn}[1]{\left\langle#1\right\rangle}
\newcommand{\symE}{{E^{(3)}}}
\newcommand{\symEone}{{E_1^{(3)}}}
\newcommand{\symbigE}{{\mathcal E^{(3)}}}
\newcommand{\ps}{\mathbb{P}}
\newcommand{\Dsys}{|\mathfrak D|}
\newcommand{\lss}[2]{{}^{#1}\textrm{$#2$}}
\DeclareMathOperator{\bigO}{O}
\DeclareMathOperator{\Bl}{Bl}
\DeclareMathOperator{\rank}{rank}
\DeclareMathOperator{\AJ}{AJ}
\DeclareMathOperator{\Alb}{Alb}
\DeclareMathOperator{\SO}{SO}
\DeclareMathOperator{\Sing}{Sing}
\DeclareMathOperator{\alg}{alg}
\DeclareMathOperator{\Spin}{Spin}
\DeclareMathOperator{\GSpin}{GSpin}
\DeclareMathOperator{\CH}{CH}
\DeclareMathOperator{\an}{an}
\DeclareMathOperator{\KS}{KS}
\DeclareMathOperator{\pr}{pr}
\newcommand{\enmot}{\mathscr{E}nd}
\newcommand{\cc}{CC\textsubscript{3} }
\title[The Tate Conjecture for a family of surfaces of general type]{The Tate Conjecture for a family of surfaces of general type with $p_g=q=1$ and $K^2=3$}
\author{Christopher Lyons}
\address{California State University, Fullerton \\
Department of Mathematics \\
800 N. State College Blvd \\
Fullerton, CA 92834 }
\email{clyons@fullerton.edu}
\begin{document}

\begin{abstract}
We prove a big monodromy result for a smooth family of complex algebraic surfaces of general type, with invariants $p_g=q=1$ and $K^2=3$, that has been introduced by Catanese and Ciliberto.  This is accomplished via a careful study of degenerations.  As corollaries, when a surface in this family is defined over a finitely generated extension of $\Q$, we verify the semisimplicity and Tate conjectures for the Galois representation on the middle $\ell$-adic cohomology of the surface.
\end{abstract}

\maketitle

\setcounter{tocdepth}{1}
\tableofcontents

\medskip

% 1111111111111111111111111

\section{Introduction}

\noindent In this paper, all fields under consideration will be subfields of $\C$.

In the Enriques classification of algebraic surfaces, those of general type are far less understood than their counterparts of nonmaximal Kodaira dimension.  This is not only true geometrically, but arithmetically as well.  In particular, given that surfaces of general type are, in some sense, the most common among all surfaces, they offer an important testing ground for a number of well-known, wide open conjectures in arithmetic geometry.

A second class of arithmetically interesting surfaces are those with geometric genus one.  Via the Hodge structure on their middle singular cohomology groups, these surfaces are related to objects that have traditionally received more attention in arithmetic geometry, namely abelian varieties, K3 surfaces, and Shimura varieties.  In particular, the relation with abelian varieties, first discovered by Kuga and Satake \cite{KS}, is \emph{a priori} only of a transcendental nature, but one expects it to be algebraic in light of the Hodge Conjecture.  This expectation suggests the possibility of transferring known arithmetic results for abelian varieties to surfaces of geometric genus one, an idea first explored by Deligne \cite{Del-K3}.

With this in mind, we focus on a class of surfaces of general type that also have geometric genus one.  More specifically, they are minimal algebraic surfaces with geometric genus $p_g=1$, irregularity $q=1$, self-intersection number $K^2=3$ of the canonical divisor, and Albanese fiber genus $g=3$.   These surfaces have been introduced and classified by Catanese and Ciliberto \cite{CC}.  For this reason, they have been called \emph{Catanese-Ciliberto surfaces of fiber genus three} by Ishida \cite{Ish}, but for brevity they will be referred to in this paper simply as \emph{\cc surfaces}.
  
When its canonical divisor $K$ is ample, we will call a \cc surface \emph{admissible}.  Catanese and Ciliberto construct a smooth projective family $\pi:\mathcal X\to\mathcal S$ over $\Q$ containing all admissible \cc surfaces, where $\mathcal S$ is a smooth irreducible variety of dimension $5$.  Our first theorem concerns the monodromy representation of the topological fundamental group $\pi_1(\mathcal S(\C),\sigma)$ on the second singular cohomology of the fiber $\mathcal X_\sigma$; to state it, we need some notation.

Every \cc surface has two numerically independent curves, one being the canonical divisor $K$ and the other a smooth Albanese fiber $F$.  In the family $\pi_\C:\mathcal X_\C\to\mathcal S_\C$, the cycle classes of $K$ and $F$ in $H^2(\mathcal X_\sigma,\Q)(1)$ both come from global sections of the local system $\mathbb H:= R^2 (\pi_\C^{{\an}})_\ast \Q(1)$.  Moreover, the first of these two global sections provides a polarization for the family.  Denote by $\mathbb V$ the polarized variation of rational Hodge structure over $\mathcal S_\C$ that one obtains by taking the orthogonal complement in $\mathbb H$ of these two global sections with respect to cup product.  Let $\phi_\sigma$ denote the polarization on $\mathbb V_\sigma$.  Then the Hodge structure $\mathbb V_\sigma$ is of dimension 9 and type $\set{(-1,1),(0,0),(1,-1)}$, with a polarization of signature $(2,7)$.  

Our first theorem is a big monodromy result for the family $\pi_\C:\mathcal X_\C\to\mathcal S_\C$:

\medskip

\noindent\textbf{Theorem A.} \emph{
For $\sigma\in\mathcal S(\C)$, the image of the monodromy representation
\[
\Lambda: \pi_1(\mathcal S(\C),\sigma) \to \bigO(\mathbb V_\sigma,\phi_\sigma)
\]
is Zariski-dense.
}
\medskip

Now suppose one has an admissible \cc surface $X$ defined over a finitely generated subfield $k_0$ of $\C$.  The aforementioned work of Kuga and Satake gives a Hodge correspondence between $X_\C$ and a certain complex abelian variety, called its Kuga-Satake variety.  Using Theorem A, one can show that this abelian variety has a model $A$ over a finite extension $k_0'$ of $k_0$.  Moreover, one can show that the Hodge correspondence between $X_\C$ and its Kuga-Satake variety arises from a \emph{motivated} correspondence in the sense of Andr\'e \cite{Andre-Mot} (and hence an \emph{absolute Hodge} correspondence in the sense of Deligne \cite{Del-AH}) between $X_{k_0'}$ and $A$.  Using this and work of Faltings \cite{Fal, Fal-Wus}, one obtains the following:

\medskip

\noindent\textbf{Theorem B.} \emph{Let $k_0$ be a finitely generated subfield of $\C$, let $k$ be its algebraic closure, and let $X$ be an admissible \cc surface defined over $k_0$.  For a prime number $\ell$, consider the $\ell$-adic representation
\[
r_\ell:\Gal(k/k_0)\to \Aut\bigl( H^2(X_{k},\Q_\ell)(1) \bigr).
\]
Then the following hold:
\begin{itemize}
\item[(i)] The representation $r_\ell$ is semisimple.
\item[(ii)]  (Tate Conjecture) Let $V_{\alg}$ be the $\Q_\ell$-subspace generated by the image of the cycle class map
\[
c_\ell:\CH^1(X_{k})\to H^2(X_{k},\Q_\ell)(1).
\]
Then $V_{\alg}$ is exactly the subspace of elements in $H^2(X_{k},\Q_\ell)(1)$ that are fixed by an open subgroup of $\Gal(k/k_0)$.
\end{itemize}
}

\medskip

We remark that the method described above to obtain Theorem B from Theorem A has been known to experts for some time, see \cite[p.80]{Tate-Motives}.  The prototype is the case of K3 surfaces, which follows from work of Deligne \cite{Del-K3}.  Andr\'e \cite{Andre-KS} axiomatizes this strategy and applies it not only to K3 surfaces, but also to abelian surfaces, a class of surfaces of general type appearing in \cite{Cat, Tod}, and cubic fourfolds (where one is concerned with the cohomology group $H^4$ rather than $H^2$).  In deducing Theorem B from Theorem A, we follow the proof laid out in \cite{Andre-KS}, but at certain steps we must account for one key difference.  In each of the aforementioned cases, the proof of the big monodromy theorem is obtained as a corollary of the following fact: the image of the period map of the family in question contains a Euclidean open ball in the period domain.  Such a proof is unavailable in the case of \cc surfaces, as the dimension of moduli is $5$ and the dimension of the period domain is $7$.

Instead, we proceed by an analysis of the degenerations of admissible \cc surfaces.  In their classification of \cc surfaces, Catanese and Ciliberto show that if one fixes an elliptic curve $E$, the admissible \cc surfaces $X$ with $\Alb(X)=E$ are exactly the smooth divisors in a certain complete linear system $|\mathfrak D|$ on the symmetric cube $\symE$ of $E$.  From the construction of the family $\pi:\mathcal X\to \mathcal S$, the proof of Theorem A will follow if there exists one elliptic curve $E$ for which one can prove that the monodromy of the family of all smooth divisors in $|\mathfrak D|$ has dense image.

Since $\mathfrak D$ is not very ample on $\symE$, classical Lefschetz theory does not apply directly, but a mild generalization results in a number of hypotheses that, if satisfied, will give the proof.  The most difficult hypotheses to verify concern the structure of the collection of singular elements in $|\mathfrak D|$: they assume that this collection has exactly one irreducible component of codimension one in $|\mathfrak D|$ and that the general singular element has a singular locus of just one ordinary double point.    We use equations for \'etale covers of elements of $|\mathfrak D|$ given by Ishida \cite{Ish} to show that this holds whenever a suitably nice pencil exists in $\Dsys$.  Using the computer program \textsc{Singular}, we then verify the existence of such a pencil when $E$ is the elliptic curve is
\[
y^2=x^3+x^2-59x-783/4.
\]

\medskip

As a final remark, let us mention one simple consequence of Theorem A.  The Picard number of any \cc surface lies between 2 and 9.  Examples of Xiao \cite{Xiao} realize the upper bound.  Since the family $\pi:\mathcal X\to\mathcal S$ is defined over $\Q$, one may use Theorem A and a general result of Andr\'e \cite[Thm 5.2(3)]{Andre-Mot} to conclude:  

\medskip

\noindent\textbf{Corollary.} \emph{There exist \cc surfaces over $\bar\Q$ with minimal Picard number $2$.}

\medskip

Here is a brief outline of the paper. In \S\ref{prelim_sec} we give background on \cc surfaces.  We carry out our analysis of the singular elements of $\Dsys$ on $\symEone$ in \S\ref{degen_sec} and use it in \S\ref{mon_sec} to prove Theorem A.  Finally, Theorem B is proved within a broader axiomatic framework in \S\ref{Tate_sec}.

We end by noting that the source code and data files for all computer calculations used in this paper (specifically, in Proposition \ref{computations}) are available online at \cite{LyData}.

\medskip

\noindent\textbf{Acknowledgments.} This paper is based upon my Ph.D.\ thesis.  I thank my advisor, Dinakar Ramakrishnan, for suggesting this topic and for his guidance throughout the project.  Thanks are also due to Tom Graber, Kapil Paranjape, and Chad Schoen for helpful suggestions and  observations, to Don Blasius, Jordan Ellenberg and Kartik Prasanna for comments on earlier versions of this paper, and to Bhargav Bhatt, Amin Gholampour, and Baptiste Morin for answering questions.  Finally, I thank the referee for the corrections and improvements they have suggested.

This work has been partially supported by NSF RTG  grant DMS 0943832.

\medskip

\noindent\textbf{Conventions and notations:}

All fields considered will be subfields of $\C$.  We will use $k$ to denote an algebraically closed field.  Other conventions about fields are found at the beginning of \S3 and \S5.

Let $V$ be a vector space over $k$.  If $v\in V\setminus\set{0}$, we let $\bar v = k^\times \cdot v\in \mathbb P(V)$ denote its equivalence class in the projectivization of $V$.

For a variety $X$ with invertible sheaf $\mathcal L$ and global section $\sigma\in H^0(X,\mathcal L)$, let $Z(\sigma)$ denote the divisor of zeros of $\sigma$.  Since $Z(\sigma)$ depends only upon the class $\bar\sigma\in \mathbb P H^0(X,\mathcal L)$, we write $Z(\bar \sigma)=Z(\sigma)$.

In reference to a collection of objects parametrized by an algebraic variety, the adjective \emph{general} is used to refer to any member of the collection that lies within a sufficiently small Zariski-dense open subset of the parameter space (e.g., a general fiber of a fibration or a general pencil in a linear system).

% 2222222222222222222222222

\section{Preliminaries on \cc surfaces}\label{prelim_sec}

Recall that $k$ denotes an algebraically closed subfield of $\C$.

\begin{defn}
A minimal smooth projective connected algebraic surface $X$ over $k$ will be called a \emph{Catanese--Ciliberto surface of fiber genus three}, or more succinctly a \emph{\cc surface}, if all of the following hold:
\begin{packed_item}
\item $X$ has geometric genus $p_g=h^0(X,\Omega_X^2)=1$,
\item $X$ has irregularity $q=h^1(X,\mathcal O_X)=1$,
\item the canonical divisor $K$ of $X$ has self-intersection number $K^2=3$, and
\item the generic fiber of an Albanese fibration $\Alb:X\to\Alb(X)$ has genus $g=3$.
\end{packed_item}
We call $X$ \emph{admissible} if, additionally, $K$ is ample (or equivalently, if its canonical model is smooth).

\end{defn}

These invariants imply that a \cc surface is of general type (e.g., see Table 10 in \cite{BPV}).

If $E$ is an elliptic curve over $k$, let $\bigoplus$ denote the addition law, and let $0\in E(k)$ denote the identity.  Let $\symE$ denote the symmetric cube of $E$, which is the quotient of $E^3$ by the $S_3$-action permuting the factors; let $q:E^3\to \symE$ denote this quotient.  The summation map $\bigoplus: E^3\to E$ is $S_3$-invariant, and hence induces the \emph{Abel-Jacobi map} $\AJ: \symE\to E$.  One may define two divisors on $\symE$.  The first is denoted by $D_0$ and defined as the scheme-theoretic image $q(\set{0}\times E\times E)$ in $\symE$, and the second is $G_0 := \AJ^{-1}(0)$.  More intuitively, the points of $\symE$ correspond to effective divisors $\sum n_P P$ of degree 3 on $E$ and the Abel-Jacobi map
\begin{eqnarray*}
\AJ &:& \symE \To E \\
&& \sum n_P P\mapsto \bigoplus n_P P,
\end{eqnarray*}
sends an effective divisor of degree 3 on $E$ to the sum of its components.  The divisor $D_0$ represents those effective divisors of degree 3 whose support contains the point $0$.

The Abel-Jacobi map $\AJ: \symE\to E$ gives $\symE$ the structure of a $\mathbb P^2$-bundle over $E$.  The tautological invertible sheaf of this $\mathbb P^2$-bundle is $\mathcal O_{\symE}(D_0)$, and $G_0$ is a fiber.  We set 
\begin{equation}\label{D_defn}
\mathfrak D:=4D_0-G_0
\end{equation}
and
\begin{equation}\label{L_defn}
L:=\mathcal O_{\symE}(\mathfrak D).
\end{equation}

\begin{thm}[Catanese--Ciliberto \cite{CC}]\label{classification}
Let $X$ be a \cc surface over $k$ and let $E=\Alb(X)$.  Then the canonical model of $X$ is isomorphic to a divisor in the linear system $|\mathfrak D|$ on $\symE$ with at most rational double points as singularities.

Conversely, if $E$ is any elliptic curve over $k$, then any element of the linear system $|\mathfrak D|$ on $\symE$ with at most rational double points is the canonical model of a \cc surface with Albanese variety $E$.  Moreover, a general element $X\in |\mathfrak D|$ is smooth.  For such $X$, the restrictions from $\symE$ to $X$ of the divisors $D_0$ and $G_0$ are, respectively, numerically equivalent to the canonical divisor $K$ and an Albanese fiber $F$.
\end{thm}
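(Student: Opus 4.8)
The plan is to prove both directions by means of the Albanese fibration together with the relative canonical map. For the forward implication, I would start from a \cc surface $X$, set $E=\Alb(X)$, and let $f\colon X\to E$ be the Albanese fibration, whose general fibre $F$ has genus $3$. Since $\omega_E\cong\mathcal O_E$ the relative dualizing sheaf $\omega_{X/E}$ coincides with $\omega_X$, so $\mathcal V:=f_\ast\omega_X$ is a rank-$3$ bundle on $E$; the Leray spectral sequence, relative duality ($R^1 f_\ast\omega_{X/E}\cong\mathcal O_E$), and the invariants $p_g=q=1$ give $h^0(E,\mathcal V)=1$, $h^1(E,\mathcal V)=0$, and $\deg\mathcal V=\chi(\mathcal O_X)=1$. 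Next I would show that $\mathcal V$ is semistable: Fujita's semipositivity theorem makes it nef, and the nefness and bigness of $K_X$ together with $K_X^2=3$ constrain its subsheaves (e.g.\ a sub-line-bundle $\mathcal L\subset\mathcal V$ of positive degree would make $K_X-f^\ast\mathcal L$ effective with $(K_X-f^\ast\mathcal L)\cdot K_X<0$), enough to force semistability; Atiyah's classification of semistable bundles on an elliptic curve then identifies $\mathcal V$ with the bundle for which $\mathbb P(\mathcal V)$ is the $\mathbb P^2$-bundle $\AJ\colon\symE\to E$ and $\mathcal O_{\mathbb P(\mathcal V)}(1)=\mathcal O_{\symE}(D_0)$. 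One then checks $F$ is non-hyperelliptic --- otherwise the relative canonical map would exhibit $X$ birationally as a double cover of a conic bundle in $\mathbb P(\mathcal V)$, and computing $\chi(\mathcal O_X)$ and $K_X^2$ from the branch divisor contradicts the hypotheses --- so $F$ is a plane quartic and the relative canonical map realizes the canonical model $X_{\mathrm{can}}$ (which carries at worst rational double points, being the canonical model of a surface of general type) as a divisor on $\symE$ meeting each fibre of $\AJ$ in a quartic, hence of class $4D_0+\AJ^\ast\mathfrak b$ for some line bundle $\mathfrak b$ on $E$. Using $D_0^3=1$ and $D_0^2\cdot G_0=1$ (from the Grothendieck relation on $\mathbb P(\mathcal V)$, or from pulling $D_0$ back along the degree-six quotient $E^3\to\symE$) and imposing $K_{X_{\mathrm{can}}}^2=3$ forces $\deg\mathfrak b=-1$, so $X_{\mathrm{can}}\in|4D_0-G_0|=\Dsys$ by \eqref{D_defn}.

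For the converse, I would fix $E$, take $Y\in\Dsys$ with at worst rational double points, and compute $\omega_{\symE}\cong\mathcal O_{\symE}(-3D_0)\otimes\AJ^\ast N$ with $\deg N=1$ from the $\mathbb P^2$-bundle structure, so adjunction gives $\omega_Y\cong\mathcal O_{\symE}(D_0)|_Y$ up to a degree-zero twist pulled back from $E$. Feeding the structure sequences of $Y$ in $\symE$, twisted by $\mathcal O_{\symE}$ and by $\mathcal O_{\symE}(D_0)$, into the cohomology of line bundles on the $\mathbb P^2$-bundle (computed through $\AJ$) yields $p_g(Y)=q(Y)=1$; the intersection numbers above give $K_Y^2=D_0^2\cdot\mathfrak D=4D_0^3-D_0^2\cdot G_0=3$; and $\AJ|_Y\colon Y\to E$ is a fibration whose general fibre is a smooth plane quartic of genus $3$ and which, since $q(Y)=1$, is necessarily the Albanese fibration, with $G_0|_Y$ numerically an Albanese fibre and $K_Y\equiv D_0|_Y$. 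Since $D_0$ is nef and $(D_0|_Y)^2=3>0$, $K_Y$ is nef and big, so $Y$ is the canonical model of a minimal surface of general type with these invariants, i.e.\ of a \cc surface with Albanese variety $E$. That the general element of $\Dsys$ is smooth would then follow from Bertini after checking that $\mathfrak D$ has no fixed part and that its base locus imposes no singularity on a general member --- or, more cheaply, from the existence of even one smooth \cc surface, whose canonical model is automatically a smooth member of $\Dsys$.

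I expect the real difficulty to lie in the forward implication, and within it in the two linked points flagged above: pinning down the bundle $\mathcal V=f_\ast\omega_X$ --- equivalently, identifying $\mathbb P(\mathcal V)$ with $\symE$ --- via semipositivity, the geometry of $K_X$, and Atiyah's classification rather than by numerical invariants alone, and ruling out hyperelliptic general Albanese fibres. The converse, by contrast, is mostly bookkeeping: adjunction on $\symE$, the Leray spectral sequence for $\AJ$, and intersection theory on the $\mathbb P^2$-bundle $\symE$.
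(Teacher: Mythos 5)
The paper itself gives no proof of Theorem \ref{classification}: it is imported wholesale from Catanese--Ciliberto \cite{CC}, so your proposal has to be measured against their argument. In outline you have reproduced its architecture correctly (relative canonical map into $\mathbb P(f_\ast\omega_X)$, identification of that $\mathbb P^2$-bundle with $\symE$, computation of the divisor class, and adjunction/Leray bookkeeping for the converse), and your converse direction, as well as the Bertini argument for smoothness of the general member, are essentially sound as sketched (though note $\Dsys$ has a nonempty base locus, cf.\ Proposition \ref{bp}, so Bertini needs the local supplement you mention, and the ``cheaper'' route via the existence of a smooth \cc surface with Albanese $E$ is uncomfortably close to circular).

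The genuine gap is the phrase ``enough to force semistability.'' Fujita semipositivity only says every quotient bundle of $\mathcal V=f_\ast\omega_X$ has nonnegative degree, and your effective-divisor trick only excludes destabilizing \emph{line} subbundles. What remains is a rank-two subsheaf $W\subset\mathcal V$ of degree $1$, equivalently a quotient $\mathcal V\twoheadrightarrow Q$ with $Q$ a nontrivial line bundle of degree $0$: this is compatible with nefness, with $h^0(\mathcal V)=1$, $h^1(\mathcal V)=0$, $\deg\mathcal V=1$, and with the absence of positive-degree sub-line-bundles (e.g.\ $\mathcal V=W\oplus Q$ with $W$ stable of rank $2$ and degree $1$ passes all your tests), yet it makes $\mathcal V$ unstable, hence not a twist of the Atiyah bundle, hence $\mathbb P(\mathcal V)\not\simeq\symE$. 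Since rank and degree are coprime, stability (equivalently indecomposability) is exactly what the identification with $\symE$ requires, so this case cannot be waved away. Unwinding it via relative duality ($\mathcal V^{\vee}\simeq R^1f_\ast\mathcal O_X$) and Leray, excluding it is equivalent to showing $h^0(X,K_X+\tau)=1$ for every nontrivial $\tau\in\operatorname{Pic}^0(E)$ --- a statement about the paracanonical systems of these surfaces that is a substantive part of Catanese--Ciliberto's analysis and is not touched by the numerics you invoke. Two smaller soft spots in the same direction: the non-hyperellipticity of the general Albanese fibre is dismissed in one line, but it is a real Horikawa-type computation, not a remark; and one must still check that the relative canonical map is defined and birational onto a divisor with only rational double points along the special (possibly hyperelliptic or singular) fibres, which you pass over.
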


Catanese and Ciliberto show that
\begin{equation}\label{dim_D_sys}
h^0(\symE,L)=5.
\end{equation}
Thus the canonical models of all \cc surfaces $X$ over $k$ with $\Alb(X)=E$ belong to a family over an open subset of $\ps H^0(\symE,L) \simeq \ps^4$.  The total space of this family is an open subset of the incidence correspondence
\[
\setst{ (q,\sigma) \in \symE\times \ps H^0(\symE,L) }{ q\in Z(\sigma) }.
\]

By relativizing this picture, Catanese and Ciliberto construct a single connected family containing the canonical models of all \cc surfaces, and hence a subfamily containing all admissible \cc surfaces, as follows.  For a fixed $N>3$, let $Y:=Y_1(N)$ be the (open) modular curve classifying elliptic curves with $\Gamma_1(N)$-level structure.  Then $Y$ is a connected fine moduli space with universal family $\mathcal E\to Y$, which is defined over $\Q$ using Shimura's canonical model \cite{Shim}.  Let $\mathcal Y\stackrel{O}{\to} \mathcal E$ denote the identity section.

Then $S_3$ acts upon $\mathcal E\times_Y \mathcal E\times_Y \mathcal E$ as a $Y$-scheme by permuting the factors, and the resulting quotient is the relative symmetric cube $p:\mathcal E^{(3)}\to Y$ of $\mathcal E\to Y$; from the $S_3$-invariance of the threefold addition map $\bigoplus: \mathcal E\times_Y \mathcal E\times_Y \mathcal E \to \mathcal E$ one obtains the relative Abel-Jacobi map $\mathcal A\mathcal J: \mathcal E^{(3)}\to \mathcal E$.  One may then define on $\mathcal E^{(3)}$ the divisor $\mathcal D_0$ as the scheme-theoretic image of the composition
\[
Y\times_Y \mathcal E\times_Y\mathcal E \To \mathcal E\times_Y \mathcal E\times_Y \mathcal E \To \mathcal E^{(3)},
\]
where the first map arises from base changing the identity section $Y\stackrel{O}{\to} \mathcal E$ via the projection $\pr_1:  \mathcal E\times_Y \mathcal E\times_Y \mathcal E \to \mathcal E$, and the second map is the quotient by $S_3$.  One may also define on $\mathcal E^{(3)}$ the divisor $\mathcal G_0$ as the fiber product
\[
\xymatrix{
\mathcal G_0 \ar[r] \ar[d] & \mathcal E^{(3)} \ar[d]^{\mathcal A\mathcal J} \\
Y \ar[r]^{O} & \mathcal E.
}
\]
Note that if $y\in Y(k)$ is such that $\mathcal E_y\simeq E$, then $\mathcal D_0$ and $\mathcal G_0$ restrict to $D_0$ and $G_0$ on on the fiber $\mathcal E_y^{(3)}\simeq \symE$. One may form the invertible sheaf
\[
\mathcal L:=\mathcal O_{\mathcal E^{(3)}}(4\mathcal D_0-\mathcal G_0)
\]
on $\mathcal E^{(3)}$ and the projective bundle $\mathcal S_0:=\ps(p_\ast\mathcal L)$ over $Y$.  Then $\mathcal S_0$ is a $\ps^4$-bundle over $Y$, and one may identify its fiber over the point $y$ as
\[
(\mathcal S_0)_y \simeq \ps H^0(\symE,L).
\]
By forming the incidence correspondence
\[
\mathcal X_0:=\set{(q,s) \in \mathcal E^{(3)}\times_Y \mathcal S_0 \ | \ q\in Z(s)}
\]
and projecting onto the second factor, one obtains a flat projective family $\mathcal X_0\to \mathcal S_0$ whose fibers consist exactly of the divisors in $|\mathfrak D|$ on $\symE$, for all possible choices of $E$.  By restricting to the locus of smooth fibers, one obtains in this way a smooth geometrically connected variety $\mathcal S$ of dimension $5$ and a smooth projective family $\pi:\mathcal X\to\mathcal S$ over $\Q$.

By Theorem \ref{classification}, one has:

\begin{cor}\label{CCfamily}
Over any point of $\mathcal S(k)$, the fiber of the smooth projective family $\pi:\mathcal X\to\mathcal S$ is an admissible \cc surface over $k$.  Conversely, every admissible \cc surface over $k$ is isomorphic to the fiber over a point of $\mathcal S(k)$.
\end{cor}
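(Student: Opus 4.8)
The plan is to deduce this corollary directly from Theorem~\ref{classification} by unwinding the construction of $\pi:\mathcal X\to\mathcal S$. The crucial bookkeeping observation is that, by the very definition of $\mathcal X_0\to\mathcal S_0$, a $k$-point $s$ of $\mathcal S_0$ lying over $y\in Y(k)$ corresponds --- via the identification $(\mathcal S_0)_y\simeq \ps H^0(\mathcal E_y^{(3)},L)$, which rests on the fact that $\mathcal D_0$ and $\mathcal G_0$ restrict to $D_0$ and $G_0$ on $\mathcal E_y^{(3)}\simeq\symE$ --- to an element $Z(s)$ of the linear system $|\mathfrak D|$ on $\symE$ (with $E:=\mathcal E_y$), and the fiber of $\mathcal X_0$ over $s$ is precisely this divisor $Z(s)$. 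Since $\mathcal S\subset\mathcal S_0$ is the open locus over which the fibers of the flat projective family $\mathcal X_0\to\mathcal S_0$ are smooth, a $k$-point of $\mathcal S$ corresponds to a \emph{smooth} element of $|\mathfrak D|$ on some $\symE$, and conversely. Finally, because $k$ is algebraically closed and $Y=Y_1(N)$ is a fine moduli space, every elliptic curve over $k$ is isomorphic to $\mathcal E_y$ for some (not necessarily unique) $y\in Y(k)$.

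First I would prove the forward direction. Given $s\in\mathcal S(k)$, let $y$ be its image in $Y$ and $E=\mathcal E_y$; then $\mathcal X_s\cong Z(s)$ is a smooth element of $|\mathfrak D|$ on $\symE$. By Theorem~\ref{classification}, $Z(s)$ is the canonical model of a \cc surface $X$ with $\Alb(X)=E$. As $Z(s)$ is smooth, the canonical model of $X$ is smooth, so by definition $X$ is admissible; moreover the birational morphism from the minimal surface $X$ to its smooth canonical model $Z(s)$ is an isomorphism (it is a composition of blow-downs of $(-1)$-curves, of which a minimal surface has none), so $\mathcal X_s\cong X$ is an admissible \cc surface. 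For the converse, let $X$ be an admissible \cc surface over $k$, so that $X$ coincides with its own (smooth) canonical model; set $E=\Alb(X)$. By Theorem~\ref{classification}, $X$ is isomorphic to an element $D\in|\mathfrak D|$ on $\symE$ having at most rational double points, and since $X$ is smooth so is $D$. Choosing $y\in Y(k)$ with $\mathcal E_y\simeq E$ and transporting this isomorphism, $D$ corresponds to a point $s\in(\mathcal S_0)_y$ whose fiber in $\mathcal X_0$ is isomorphic to $D$; as $D$ is smooth, $s\in\mathcal S(k)$, and $\mathcal X_s\cong D\cong X$.

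The argument is short, being essentially a restatement of Theorem~\ref{classification}, so I do not anticipate a serious obstacle; the one point deserving care --- and the only place the minimality hypothesis enters --- is the passage, in the forward direction, from a smooth element of $|\mathfrak D|$ to an \emph{admissible} \cc surface. Theorem~\ref{classification} only directly identifies such an element with the canonical model of a \cc surface, so one must add the elementary observation that a minimal surface of general type is isomorphic to its canonical model exactly when that model is smooth, which happens exactly when $K$ is ample, i.e.\ when the surface is admissible. Granting this, smoothness of the divisor is precisely the condition that simultaneously places it in the base $\mathcal S$ and renders the corresponding surface admissible, so the two directions match up with no loss of information.
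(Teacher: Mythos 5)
Your proposal is correct and matches the paper's treatment: the paper derives this corollary directly from Theorem~\ref{classification} together with the construction of $\pi:\mathcal X\to\mathcal S$ as the smooth locus of the incidence family over the $\ps^4$-bundle $\mathcal S_0$, which is exactly the unwinding you carry out. Your added observation that smoothness of the canonical model forces the minimal surface to be isomorphic to it (and to be admissible) is the same elementary point implicit in the paper's definition of admissibility, so there is nothing essentially different here.
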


We finish this section by recording the following useful result.

\begin{thm}[Xiao, Polizzi]\label{CM_exists}
There exist admissible complex \cc surfaces $X$ having maximal Picard number $9$.  
\end{thm}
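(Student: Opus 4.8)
I would begin by recasting the assertion in terms of Hodge structures. Since $\chi(\mathcal O_X)=1-q+p_g=1$ and $K^2=3$, Noether's formula $12\,\chi(\mathcal O_X)=K^2+c_2(X)$ gives $c_2(X)=9$, hence $b_2(X)=c_2(X)-2+4q=11$ and $h^{1,1}(X)=b_2(X)-2p_g=9$. By the Lefschetz $(1,1)$-theorem the Picard number of any \cc surface is thus at most $9$. Let $T(X)\subset H^2(X,\Q)$ be the transcendental lattice, i.e.\ the smallest sub-Hodge structure whose complexification contains the line $H^{2,0}(X)$; since $H^2(X,\Q)$ is the orthogonal direct sum of $T(X)$ and the (rational) N\'eron--Severi group, one has $\rho(X)=9$ if and only if $\rank T(X)=2$. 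In that case the Hodge index theorem forces the cup-product form on $T(X)_\R$ to be positive definite, so the Hodge group of $T(X)$ lies in $\SO(2)$ and $T(X)$ is automatically of CM type. Hence it is enough to produce a single admissible \cc surface whose transcendental motive has rank $2$ (equivalently, a single \cc surface with complex multiplication).

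To do this I would realise a \cc surface as a \emph{standard isotrivial fibration}: take $X$ to be the minimal resolution of a quotient $(C\times E)/G$, where $E$ is an elliptic curve, $C$ is a smooth projective curve of genus $3$ with $C/G\cong\ps^1$, and the finite group $G$ acts diagonally. The required invariants $p_g=q=1$, $K^2=3$ and Albanese fibre genus $3$, together with the ampleness of $K$, would be extracted from the ramification data of the two quotient maps $C\to C/G$ and $E\to E/G$ in the standard way; by Corollary~\ref{CCfamily} any such $X$ then appears in the family $\pi:\mathcal X\to\mathcal S$. A $G$-equivariant K\"unneth analysis shows that, modulo the classes of exceptional curves of the resolution, $H^2(X,\Q)$ consists of $G$-invariant Tate classes together with a sub-Hodge structure of $H^1(C,\Q)\otimes H^1(E,\Q)$; in particular $T(X)$ embeds into $H^1(C)\otimes H^1(E)$.

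Finally I would choose $E$ to have complex multiplication and $C$ to have Jacobian of CM type — for instance $C$ the Fermat or the Klein quartic, whose Jacobian is isogenous to a product of CM elliptic curves — together with a group $G$ acting on both curves as above. Then $H^1(C)\otimes H^1(E)$, being a tensor product of Hodge structures of CM type, is itself of CM type, and therefore so is every sub-Hodge structure of it; in particular $T(X)$ is of CM type, and its rank is then automatically even and at least $2$. The main obstacle, and the heart of the construction, is to arrange that this rank equals exactly $2$: equivalently, that the $G$-invariant part of $H^1(C)\otimes H^1(E)$ contributes a full $h^{1,1}(X)-2=7$ independent algebraic classes beyond the classes of $K$ and of the Albanese fibre. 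Since everything in sight is of CM type, this last point is a finite computation — counting the Hodge (hence algebraic) eigenlines in the decomposition of $H^1(C)\otimes H^1(E)$ under $G$ and the relevant CM tori — which one carries out by hand on a carefully chosen example, after also checking in that example that $K_X$ is ample. This is precisely the explicit work contained in the constructions of Xiao \cite{Xiao} and of Polizzi.
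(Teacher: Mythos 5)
Your opening reduction is fine ($c_2=9$, $b_2=11$, $h^{1,1}=9$, so $\rho\le 9$ with equality iff the transcendental part has rank $2$), but the construction you build the rest of the argument on cannot exist. If $X$ is the minimal resolution of $(C\times E)/G$ with $G$ acting diagonally, then $q(X)=g(C/G)+g(E/G)$; with $C/G\cong\ps^1$ the requirement $q=1$ forces $g(E/G)=1$, and by Riemann--Hurwitz a faithful action of $G$ on the elliptic curve $E$ with elliptic quotient is unramified, hence given by translations and free. The diagonal action on $C\times E$ is then free, so $(C\times E)/G$ is already smooth with $\chi(\mathcal O)=\chi(\mathcal O_C)\chi(\mathcal O_E)/|G|=0$ (and it is an isotrivial elliptic surface over $\ps^1$, not of general type). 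So no surface with $p_g=q=1$, $K^2=3$ arises this way: a quotient construction of this kind would need \emph{both} factors of genus at least $2$ (the genus-$3$ Albanese fibre and a second curve whose $G$-quotient is the elliptic Albanese), and your subsequent K\"unneth/CM analysis, which tensors $H^1(C)$ with $H^1$ of the elliptic curve $E$ itself, collapses with it. A smaller point: ``transcendental rank $2$'' is \emph{not} equivalent to ``CM''; rank $2$ implies CM, but a CM transcendental lattice can have any even rank, so producing a CM example does not by itself bound the rank, as you in fact concede.

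Beyond this, the step you yourself identify as ``the heart'' --- exhibiting one example where the rank is exactly $2$ and $K$ is ample --- is not carried out but deferred to ``the explicit work of Xiao and Polizzi,'' which is precisely what the paper's proof consists of: it cites Xiao \cite{Xiao}, who for \emph{every} elliptic curve $E$ constructs a \cc surface with a genus-two pencil and $\Alb(X)=E$, and Polizzi \cite{Pol}, who proves these are admissible and have $\rho=9$. Note that this holds with no CM hypothesis on $E$, which already signals that the mechanism behind the maximal Picard number in the cited works is not complex multiplication; so your proposal both mischaracterizes the cited construction and does not supply an independent route to the one fact that needs proving.
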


\begin{proof}
For any $E$, Xiao \cite[p.51, Cor.\ 4]{Xiao} constructs a \cc surface $X$ with a genus two pencil and $\Alb(X)=E$, and Polizzi \cite[Props.\ 6.3, 6.18]{Pol} shows it is admissible and has maximal Picard number.
\end{proof}

% 3333333333333333333333333

\section{Singular elements in $\Dsys$}\label{degen_sec}

Throughout \S\ref{degen_sec}, the base field will always be $\C$.

\subsection{}\label{Ishida_subsec}

Let $E$ be a complex elliptic curve and let $\Dsys$ be the complete linear system on $\symE$ defined in (\ref{D_defn}).  We describe here certain \'etale threefold covers of the elements of $\Dsys$ that are  amenable to computation, which in turn allow one to deduce certain facts about $\Dsys$.  These techniques are used by Ishida in \cite{Ish} to study Albanese fibrations of \cc surfaces, and draw from more general situations considered in \cite{Tak}.  We refer to either of these sources for further details and references.

The two key motivating facts are:
\begin{enumerate}

\item The Abel-Jacobi map $\AJ:\symE\to E$ makes $\symE$ into a $\ps^2$-bundle $\mathbb P(B)\to E$, where $B$ is an indecomposable locally free sheaf of rank 3 and degree 1.
\item Let $\tilde E$ be an elliptic curve with identity $\tilde 0\in\tilde E$ such that there exists an isogeny $\varphi:\tilde E\to E$ of degree 3.  Then $B':=\varphi_\ast \mathcal O_{\tilde E}(\tilde 0)$ is an indecomposable locally free sheaf of rank 3 and degree 1 with the property that $\varphi^\ast B'$ is a direct sum of three invertible sheaves on $\tilde E$.
\end{enumerate}
As $\symE=\mathbb P(B)$ and $\mathbb P(B')$ are isomorphic \cite{Ati}, we may fix an identification between the two.  Defining the $\ps^2$-bundle
\begin{equation}\label{tildeP_defn}
\tilde P:=\mathbb P(\varphi^\ast B'),
\end{equation}
with projection $\tilde \AJ:\tilde P\to \tilde E$, one obtains a commutative diagram
\[
\xymatrix{
\tilde P \ar[r]^{\Phi} \ar[d]^{\tilde \AJ} & \symE \ar[d]^{\AJ} \\
\tilde E \ar[r]^{\varphi} & E
}
\]
in which $\tilde P$ is the fiber product of $\tilde E$ and $\symE$ over $E$.  Thus, if we let $G=\ker\varphi=\set{\tilde 0,C_1,C_2}$, then both $\varphi$ and $\Phi$ are Galois coverings with group $G$.  More specifically, if $Q\in\tilde E$ and $\tau_Q\in\Aut(\tilde E)$ denotes translation by $Q$, then $\gamma\in G$ acts on $\tilde E$ by $\tau_\gamma$ and on $\tilde P$ by the base change $\tilde\tau_\gamma$ of $\tau_\gamma$.

The locally free sheaf $\varphi^\ast B'$ splits as
\begin{equation}\label{splitting}
\varphi^\ast B' \simeq \mathcal O_{\tilde E}(\tilde 0)\oplus \mathcal O_{\tilde E}(C_1)\oplus \mathcal O_{\tilde E}(C_2),
\end{equation}
from which one obtains
\begin{eqnarray*}
H^0(\tilde P,\mathcal O_{\tilde P}(1)) &\simeq& H^0(\tilde E,\mathcal O_{\tilde E}(\tilde 0))\oplus H^0(\tilde E,\mathcal O_{\tilde E}(C_1))\oplus H^0(\tilde E,\mathcal O_{\tilde E}(C_2)).
\end{eqnarray*}
Let $Z_0$ (resp., $Z_1,Z_2$) denote the section in $H^0(\tilde P,\mathcal O_{\tilde P}(1))$ that corresponds to the constant function $1$ in $H^0(\tilde E,\mathcal O_{\tilde E}(\tilde 0))$ (resp., $H^0(\tilde E,\mathcal O_{\tilde E}(C_1))$, $H^0(\tilde E,\mathcal O_{\tilde E}(C_2))$).  The splitting  (\ref{splitting}) allows one to easily obtain local trivializations of $\tilde P$ over $\tilde E$, two of which we make explict:
\begin{itemize}
\item Consider the open subset $U = \tilde E\setminus G = \tilde E\setminus\set{\tilde 0,C_1,C_2}$.  Each of the three invertible sheaves $\mathcal O_{\tilde E}(\tilde 0)$, $\mathcal O_{\tilde E}(C_1)$, $\mathcal O_{\tilde E}(C_2)$ are isomorphic over $U$ to the constant sheaf $\mathcal O_{\tilde E}$ via the identity map.  It follows from (\ref{tildeP_defn}) and (\ref{splitting}) that $(Z_0\colon Z_1\colon Z_2)$ serve as relative homogenous coordinates for the restriction $\tilde P|_{U}$ of the $\mathbb P^2$-bundle $\tilde P$ to $U$.
\item The rational function $t=x/y$ on $\tilde E$ is a local parameter at $\tilde 0$.  Let $U'\subseteq \tilde E$ denote the complement of $C_1, C_2$, and the three nontrivial 2-torsion points of $\tilde E$; that is, $U'$ is the open subset of $\tilde E$ obtained by removing all points in the support of $\text{div}(t)$ except for $\tilde 0$.  Over $U'$, the sheaves $\mathcal O_{\tilde E}(C_1)$ and $\mathcal O_{\tilde E}(C_2)$ are still isomorphic to $\mathcal O_{\tilde E}$ via the identity map.  For $\mathcal O_{\tilde E}(\tilde 0)$ one may use the isomorphism $\mathcal O_{\tilde E}(\tilde 0)|_{U'}\tilde\To \mathcal O_{\tilde E}|_{U'}$ given by multiplication by $t$.  Setting $Z_0':=t^{-1}Z_0$, it follows that $(Z_0'\colon Z_1 \colon Z_2)$ serve as relative homogenous coordinates for $\tilde P|_{U'}$ over $U'$
\end{itemize}

Note that the action of $G$ on $Z_0,Z_1,Z_2$ is described by
\begin{equation}\label{Gaction1}
\tilde \tau_{C_1}^*Z_0=Z_2, \tab \tilde \tau_{C_2}^*Z_0=Z_1.
\end{equation}
Together with the open set $\tilde P\vert_{U}$ (which is stabilized by $G$), the three $G$-translates of $\tilde P\vert_{U'}$ form a trivializing open cover of $\tilde P$.  As it happens, we will only concern ourselves with local properties of $G$-stable divisors on $\tilde P$, and so will only need to utilize the two coordinate charts described above.

Choose an affine equation $y^2=w(x)$ for $\tilde E$, where $w$ is a monic cubic polynomial with nonzero discriminant.  If $C_1=(\alpha,\beta)$ (and thus $C_2=(\alpha,-\beta)$), define three rational functions on $\tilde E$ by
\begin{eqnarray*}
f &:=& x-\alpha, \\
g &:=& x\circ \tau_{C_2}-\alpha, \\
h &:=& x\circ \tau_{C_1}-\alpha.
\end{eqnarray*}
From their definitions, $G$ permutes these three functions:
\begin{equation}\label{Gaction2}
g=\tau_{C_2}^*f, \tab h=\tau_{C_1}^*f.
\end{equation}

With the invertible sheaf $L$ as in (\ref{L_defn}), one has isomorphisms
\[
\Phi^\ast L \simeq \mathcal O_{\tilde P}(4)\otimes \mathcal O_{\tilde P}(-\tilde P_{\tilde 0}-\tilde P_{C_1}-\tilde P_{C_2})
\]
and
\begin{equation}\label{Phi_isom}
\Phi^*:H^0(\symE,L)\tilde \To H^0(\tilde P,\Phi^*L)^G.
\end{equation}

\begin{prop}[\cite{Ish,Tak}] \label{IshidaEqns}
A basis for the space $H^0(\tilde P,\Phi^*L)^G$ is given by the following sections:
\begin{eqnarray*}
\Psi_1 &:=& fZ_0^4+gZ_1^4+hZ_2^4, \\
\Psi_2 &:=& Z_0Z_1Z_2(Z_0+Z_1+Z_2), \\
\Psi_3 &:=& fZ_0^3Z_2+gZ_1^3Z_0+hZ_2^3Z_1, \\
\Psi_4 &:=& fZ_0^3Z_1+gZ_1^3Z_2+hZ_2^3Z_0, \\
\Psi_5 &:=& ghZ_1^2Z_2^2+fhZ_0^2Z_2^2+fgZ_0^2Z_1^2.
\end{eqnarray*}
\end{prop}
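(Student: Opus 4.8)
The plan is to compute $H^0(\tilde P,\Phi^\ast L)$ directly, by pushing it forward along the $\ps^2$-bundle $\tilde\AJ:\tilde P\to\tilde E$, and then extracting the $G$-invariants by inspection. First, since $\mathcal O_{\tilde P}(\tilde P_Q)\simeq\tilde\AJ^\ast\mathcal O_{\tilde E}(Q)$ for a point $Q\in\tilde E$, the isomorphism $\Phi^\ast L\simeq\mathcal O_{\tilde P}(4)\otimes\mathcal O_{\tilde P}(-\tilde P_{\tilde 0}-\tilde P_{C_1}-\tilde P_{C_2})$ recorded above becomes $\Phi^\ast L\simeq\mathcal O_{\tilde P}(4)\otimes\tilde\AJ^\ast\mathcal O_{\tilde E}(-\tilde 0-C_1-C_2)$. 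The projection formula together with $\tilde\AJ_\ast\mathcal O_{\tilde P}(4)\simeq\operatorname{Sym}^4(\varphi^\ast B')$ (the $n=1$ case of which underlies the description of $H^0(\tilde P,\mathcal O_{\tilde P}(1))$ in \S\ref{Ishida_subsec}) then gives $\tilde\AJ_\ast\Phi^\ast L\simeq\operatorname{Sym}^4(\varphi^\ast B')\otimes\mathcal O_{\tilde E}(-\tilde 0-C_1-C_2)$, and substituting the splitting (\ref{splitting}) yields
\[
H^0(\tilde P,\Phi^\ast L)\ \simeq\ \bigoplus_{i+j+k=4}H^0\bigl(\tilde E,\ \mathcal O_{\tilde E}((i-1)\tilde 0+(j-1)C_1+(k-1)C_2)\bigr),
\]
the $(i,j,k)$-summand being spanned by the sections $c\cdot Z_0^iZ_1^jZ_2^k$ with $c$ a rational function on $\tilde E$ such that $\text{div}(c)\ge-\bigl((i-1)\tilde 0+(j-1)C_1+(k-1)C_2\bigr)$. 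Each of the $15$ line bundles appearing has degree $(i+j+k)-3=1$, hence each summand is one-dimensional and $\dim H^0(\tilde P,\Phi^\ast L)=15$.

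Next I would record how $G$ acts. By (\ref{Gaction1}) the generators $\tilde\tau_{C_1}^\ast,\tilde\tau_{C_2}^\ast$ act on $(Z_0,Z_1,Z_2)$ as the two $3$-cycles, so they permute the $15$ monomials $Z_0^iZ_1^jZ_2^k$ in five orbits of size three (none is fixed, as $3\nmid4$), while pullback by $\tau_\gamma$ carries the coefficient line of a monomial isomorphically onto that of its image. Thus, as a module over $G\simeq\Z/3\Z$, the space $H^0(\tilde P,\Phi^\ast L)$ is a sum of five copies of the regular representation, so $\dim H^0(\tilde P,\Phi^\ast L)^G=5$ -- as it must be by (\ref{Phi_isom}) and (\ref{dim_D_sys}) -- and a basis of the invariants is produced by applying the averaging operator $N:=\operatorname{id}+\tilde\tau_{C_1}^\ast+\tilde\tau_{C_2}^\ast$ to one generator of a coefficient line in each orbit.

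It then remains to run the five cases. From $f=x-\alpha$ one gets $\text{div}(f)=C_1+C_2-2\tilde 0$, and (\ref{Gaction2}) gives $\text{div}(g),\text{div}(h)$ by translation; a short comparison of pole orders then shows that $f$ generates the coefficient line of each of $(4,0,0),(3,0,1),(3,1,0)$, that the constant function $1$ generates that of $(2,1,1)$, and that $fg$ generates that of $(2,2,0)$. Applying $N$ to the sections $fZ_0^4$, $fZ_0^3Z_2$, $fZ_0^3Z_1$, $Z_0^2Z_1Z_2$, $fgZ_0^2Z_1^2$ -- using (\ref{Gaction1}) and (\ref{Gaction2}) to track the effect of $\tilde\tau_\gamma^\ast$ on $Z_i$ and on $f,g,h$ -- gives exactly $\Psi_1,\Psi_3,\Psi_4,\Psi_2,\Psi_5$. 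These lie in the five distinct orbits, are $G$-invariant, and are visibly independent (their monomial supports are pairwise disjoint), so they form a basis of the $5$-dimensional space $H^0(\tilde P,\Phi^\ast L)^G$, which by (\ref{Phi_isom}) is identified with $H^0(\symE,L)$.

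The one place where real care is needed -- the main obstacle -- is the pole-order bookkeeping: one must confirm that each $\Psi_i$ extends to a global section of $\Phi^\ast L$ over all of $\tilde P$, and not just over $\tilde P|_U$. Over $U=\tilde E\setminus G$ this is the divisor inequality above for $f,g,h$ and their products; over the remaining fibers -- those above $G$ and above the nontrivial $2$-torsion of $\tilde E$ -- one should rewrite each $\Psi_i$ in the chart $\tilde P|_{U'}$ of \S\ref{Ishida_subsec} using the coordinate $Z_0'=t^{-1}Z_0$ (and its $G$-translates) and check regularity there. Alternatively, one may bypass the identification of generators altogether: verify directly from (\ref{Gaction1}) and (\ref{Gaction2}) that the five displayed expressions are $G$-invariant and everywhere regular, observe that they are independent, and conclude with the dimension count $\dim H^0(\tilde P,\Phi^\ast L)^G=5$. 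Either way the analytic content reduces to the same elementary divisor computation on $\tilde E$, so this is more bookkeeping than difficulty.
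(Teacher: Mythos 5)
Your argument is correct, and it is worth noting that the paper itself offers no proof of this proposition: it is simply cited from Ishida and Takahashi, where the computation is carried out in essentially the same spirit (using the splitting of $\varphi^\ast B'$ and $G$-invariance). Your write-up is therefore a self-contained verification along the natural lines: pushing forward by the projection formula to get $\tilde\AJ_\ast\Phi^\ast L\simeq\operatorname{Sym}^4(\varphi^\ast B')\otimes\mathcal O_{\tilde E}(-\tilde 0-C_1-C_2)$, splitting into the fifteen degree-one summands indexed by monomials in $Z_0,Z_1,Z_2$, observing that $G$ permutes these in five free orbits (so the invariants are $5$-dimensional, matching (\ref{dim_D_sys}) via (\ref{Phi_isom})), and averaging explicit generators; your divisor computations for $f$, $g$, $h$, $fg$ and the resulting identifications with $\Psi_1,\ldots,\Psi_5$ all check out. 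One small remark: the ``main obstacle'' you flag at the end is not actually an obstacle. Because you identify $H^0(\tilde P,\Phi^\ast L)$ with $H^0(\tilde E,\tilde\AJ_\ast\Phi^\ast L)$ from the outset, the divisor inequality on the coefficient function $c$ \emph{is} the global regularity condition; no separate check in the chart $\tilde P|_{U'}$ over the fibers above $G$ or the $2$-torsion is needed (such a check is only relevant if one insists on interpreting the $\Psi_i$ as naive polynomial expressions in the trivialization over $U$, as the paper does later when forming the $\chi_i$). So the last paragraph can be compressed to the observation that the pushforward description already settles global regularity.
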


Consider a global section $s\in H^0(\tilde P,\Phi^*L)^G$ and the $G$-stable divisor $Z(s)$ that it determines on $\tilde P$.  As remarked above, in order to study local properties of $Z(s)$, it suffices to study it only on the two open sets $\tilde P|_U$ and $\tilde P|_{U'}$.  Given that $(Z_0\colon Z_1\colon Z_2)$ form relative homogenous coordinates on the open set $\tilde P|_{U}$, the equations given in Proposition \ref{IshidaEqns} are well-suited for analyzing the divisor $Z(s)$ on $\tilde P|_{U}$.  On the other hand, making the substitution $Z_0=t Z_0'$ and dividing by $t$ allows one to analyze  $Z(s)$ on the open set $\tilde P|_{U'}$, where relative homogenous coordinates are $(Z_0':Z_1:Z_2)$; define
\[
\chi_i\bigl(t,Z_0',Z_1,Z_2\bigr):=t^{-1}\Psi_i(tZ_0',Z_1,Z_2)
\]
for $1\leq i\leq 5$.  If one expands the rational functions $f,g,h$ in terms of the local parameter $t$ (and defines $\mu:=w'(\alpha)$), one obtains the following expressions for $\chi_i$:
\begin{eqnarray*}
\chi_1 &=& 2\beta(Z_1^4-Z_2^4)+t(Z_0'^4+\mu Z_1^4+\mu Z_2^4)+O(t^2), \\
\chi_2 &=& Z_0'Z_1Z_2(Z_1+Z_2)+tZ_0^2Z_1Z_2, \\
\chi_3 &=& Z_0'^3Z_2-2\beta Z_1Z_2^3+t(\mu Z_1Z_2^3+2\beta Z_0'Z_1^3)+O(t^2),\\
\chi_4 &=& Z_0'^3Z_1+2\beta Z_1^3Z_2+t(\mu Z_1^3Z_2-2\beta Z_0'Z_2^3)+O(t^2),\\
\chi_5 &=& 2\beta Z_0'^2(Z_1^2-Z_2^2)+t(\mu Z_0'^2Z_2^2+\mu Z_0'^2Z_1^2-4\beta^2Z_1^2Z_2^2)+O(t^2).
\end{eqnarray*}
These expressions reveal the behavior of $Z(s)$ in a neighborhood of the fiber $\tilde P_{\tilde 0}$, which is the key information about $Z(s)$ on the open set $\tilde P|_{U'}$ that cannot be obtained by looking on $\tilde P|_{U}$.

Recall that our primary focus is the divisors in the linear system $\Dsys$ on $\symE$.  By (\ref{Phi_isom}), the pullback of any divisor in $\Dsys$ to $\tilde P$ is an \'etale threefold cover of the form $Z(s)$, for some $s\in H^0(\tilde P,\Phi^*L)^G$; thus local properties of elements of $\Dsys$ may be determined by instead looking at such $Z(s)$ on $\tilde P$.  This method is adopted to carry out several computations below.  In order to label sections of $H^0(\symE,L)$ in what follows, we use the isomorphism (\ref{Phi_isom}) to define
\begin{equation}\label{psi_defn}
\psi_i:=(\Phi^\ast)^{-1}(\Psi_i)\in H^0(\symE,L).
\end{equation}

Before proceeding, we demonstrate the utility of the equations of Ishida given in Proposition \ref{IshidaEqns} by resolving a question about the set of base points of $\Dsys$, which is conjectured in \cite{CC} to be empty.

\begin{prop}\label{bp}
There are exactly four base points of $|\mathfrak D|$, each of which is simple and belongs to the fiber $G_0\subseteq\symE$.
\end{prop}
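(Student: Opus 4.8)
The plan is to compute the base locus of $\Dsys$ by instead computing the base locus of the pulled-back linear system $\Phi^\ast\Dsys$ on the $\ps^2$-bundle $\tilde P$, which by (\ref{Phi_isom}) is spanned by the five sections $\Psi_1,\dots,\Psi_5$ of Proposition \ref{IshidaEqns}. Since $\Phi$ is finite \'etale of degree $3$, a point $\bar s\in\symE$ is a base point of $\Dsys$ if and only if each of its three preimages in $\tilde P$ is a base point of $\Phi^\ast\Dsys$, and the multiplicity of $\bar s$ as a base point equals the multiplicity of any one preimage. So it suffices to locate the common zeros of $\Psi_1,\dots,\Psi_5$ on $\tilde P$, to verify there are twelve of them (forming four $G$-orbits of size three, since the base locus is $G$-stable and contains no fixed points), and to check that each is a \emph{simple} base point, i.e.\ that the five sections cut out a reduced length-one scheme there.

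First I would work on the chart $\tilde P|_U$ with relative homogeneous coordinates $(Z_0\colon Z_1\colon Z_2)$, where the $\Psi_i$ are literally the polynomials displayed in Proposition \ref{IshidaEqns} (with $f,g,h$ regarded as regular functions on $U=\tilde E\setminus G$). Setting $\Psi_2 = Z_0Z_1Z_2(Z_0+Z_1+Z_2)=0$ forces one of the four factors to vanish; I would run through these cases and solve the resulting systems. For instance, on $Z_0=0$ the sections $\Psi_1,\Psi_3,\Psi_4,\Psi_5$ restrict to $gZ_1^4+hZ_2^4$, $hZ_2^3Z_1$, $gZ_1^3Z_2$, and $ghZ_1^2Z_2^2$; since $g,h$ are nonvanishing on $U$, the last three force $Z_1Z_2=0$, and then $\Psi_1$ forces the remaining coordinate to vanish too — so there are no base points with $Z_0=0$ lying over $U$. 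The symmetric cases $Z_1=0$ and $Z_2=0$ behave identically by the $G$-symmetry (\ref{Gaction1})--(\ref{Gaction2}). The only surviving case is $Z_0+Z_1+Z_2=0$ with all $Z_i\neq 0$, which I expect to reduce, after eliminating one coordinate, to a system in the ratio $Z_1/Z_0$ (say) whose coefficients are rational functions of the base point $Q\in\tilde E$ through $f(Q),g(Q),h(Q)$; solving it should pin down finitely many fibers, and in each such fiber a unique point. I anticipate the count comes out to exactly twelve points on $\tilde P$, none of them in any fiber $\tilde P_{\tilde 0},\tilde P_{C_1},\tilde P_{C_2}$ — so the second chart $\tilde P|_{U'}$, which is only needed to see the divisors near $\tilde P_{\tilde 0}$, contributes nothing new, though I would double-check using the expansions of $\chi_1,\dots,\chi_5$ that no base point escapes onto the removed fibers.

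Having located the twelve points, the next step is to show each is a simple base point. Concretely, at a base point $p\in\tilde P$ I would choose local analytic (or \'etale) coordinates and show that the linear parts of $\Psi_1,\dots,\Psi_5$ at $p$ span a $2$-dimensional space — equivalently, that the Jacobian of $(\Psi_1,\dots,\Psi_5)$ at $p$ has rank $2$ (one less than $\dim\tilde P=3$, reflecting that through each base point there is a one-parameter family of members of $\Phi^\ast\Dsys$, namely the whole pencil, but the base scheme is zero-dimensional and reduced). This transversality check at the twelve points will establish that the base locus of $\Phi^\ast\Dsys$ is reduced of length $12$, hence that $\Dsys$ has exactly four base points, each simple. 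Finally, to see that all four base points lie on $G_0=\AJ^{-1}(0)\subseteq\symE$, I would observe that each of the twelve points on $\tilde P$ lies over $\tilde 0\in\tilde E$ in the fibration $\tilde\AJ$ — which I expect to fall out of the elimination in the $Z_0+Z_1+Z_2=0$ case — and then use the commutative diagram relating $\tilde\AJ$, $\AJ$, $\varphi$, $\Phi$, together with $\varphi(\tilde 0)=0$, to conclude that their images under $\Phi$ lie in $\AJ^{-1}(0)=G_0$.

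The main obstacle is the explicit elimination in the surviving case $Z_0+Z_1+Z_2=0$: one must solve a system whose coefficients are the rational functions $f,g,h$ on $\tilde E$, keep careful track of which solutions actually lie in the open locus $U$ (versus being spurious solutions forced onto the deleted fibers), and confirm the count is exactly twelve with the reducedness holding at each point. It is conceivable that some of this is cleanest to verify with a symbolic computation, parallel to how Proposition \ref{computations} is handled; in any case the bookkeeping of the $G$-action, which divides the twelve points into four orbits and transports the simplicity check from one point to another, is what makes the final count of \emph{four simple} base points come out cleanly.
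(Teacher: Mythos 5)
Your overall strategy (compute the common zero locus of $\Psi_1,\dots,\Psi_5$ on $\tilde P$ and descend along the free $G$-action) is genuinely different from the paper's: the paper does not compute the full base scheme at all, but quotes \cite[Lemma 3.3]{CC} for the fact that any base point is simple and \cite[Thm.\ 3.8]{Pol} for the fact that there are at most four base points, all lying on $G_0$, and then only has to exhibit four common zeros, which it does in the single fiber $\tilde P_{\tilde 0}$ by setting $t=0$ in the $\chi_i$ (the points $(1:0:0)$ and $(r:1:-1)$ with $r^3=2\beta$). Your self-contained route is viable in principle, but as written it has a real gap: your anticipated outcome of the elimination is backwards. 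The twelve upstairs base points lie precisely on the three fibers $\tilde P_{\tilde 0},\tilde P_{C_1},\tilde P_{C_2}$ over $\ker\varphi$ --- that is exactly how they map into $G_0=\AJ^{-1}(0)$ downstairs --- so the chart $\tilde P|_U$ contains none of them. Your surviving case $Z_0+Z_1+Z_2=0$ over $U$ has no solutions, and the entire content of the existence and location statement is the $t=0$ computation with $\chi_1,\dots,\chi_5$ on $\tilde P|_{U'}$, which you relegate to a ``double-check that nothing escapes.'' Since the decisive computations are only anticipated, not carried out, and the anticipation singles out the wrong chart, the argument does not yet establish the proposition; to repair it you must actually do both computations (the empty elimination over $U$ is still needed if you want the upper bound of four without citing Polizzi, and you would also need it, or a separate argument, to rule out base points away from $G_0$).

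There is also a local error in your simplicity criterion. You ask that the Jacobian of $(\Psi_1,\dots,\Psi_5)$ at a base point have rank exactly $2$, while simultaneously asserting that the base scheme there is zero-dimensional and reduced; these are incompatible, since rank $2$ in a threefold leaves a one-dimensional Zariski tangent space, whereas a reduced isolated point forces rank $3$. The notion of simple base point used in the paper is that two general members of $\Dsys$ are smooth and meet transversally at the point, which corresponds to the differentials of the local equations spanning a space of dimension at least $2$, not exactly $2$. In fact at the four points the rank is $3$: for instance at $(Z_0':Z_1:Z_2)=(1:0:0)$, $t=0$, the linear parts of $\chi_1,\chi_3,\chi_4$ in the local coordinates $(t,Z_1/Z_0',Z_2/Z_0')$ are already independent. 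So the transversality check you propose should be stated as ``rank at least $2$'' (or carried out to exhibit rank $3$), and the parenthetical justification should be dropped.
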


\begin{proof}
It is shown in \cite[Lemma 3.3]{CC} that any possible base points of $|\mathfrak D|$ must be simple, while in \cite[Theorem 3.8]{Pol} it is shown that there are at most four base points, each of which lies in the fiber $G_0$.  Hence it suffices to show that the divisors $Z(s)$, for $s\in H^0(\tilde P,\Phi^\ast L)^G$, all pass through four common points in the fiber $\tilde P_{\tilde 0}$.   If ${r_1,r_2,r_3}$ are the roots of $X^3-2\beta$ (note that $\beta\neq 0$ since $C_1$ is 3-torsion), one finds that the four values
\[
(1:0:0), \ (r_1:1:-1), \ (r_2:1:-1), \ (r_3:1:-1)
\]
for $(Z_0':Z_1:Z_2)$ are zeros of each $\chi_i$ when $t=0$.
\end{proof}

%%%%%%%%%%%%%%

\subsection{}\label{pencil_subsec}
We now analyze the collection of singular divisors in $\Dsys$.  To start, define
\[
S:=\ps H^0(\symE,L)\simeq\ps^4
\]
and let
\begin{equation}\label{R_defn}
R:=\set{s\in S \ | \ Z(s)\text{ is singular}} \subseteq S,
\end{equation}
which parametrizes the singular divisors in $\Dsys$.  As the general element of $\Dsys$ is smooth by Theorem \ref{classification}, $R$ is a proper Zariski-closed subset of $S$.  Endow $R$ with its unique structure as a reduced subscheme of $S$.

\begin{lem}\label{smooth_axis_lemma}
Choose any $s_0\in S$ such that $Z(s_0)$ is smooth, and let $J\subseteq S$ be a general pencil (i.e., a general one-dimensional projective subspace of $S$) that contains $s_0$.  Then the base locus of $J$ is smooth.
\end{lem}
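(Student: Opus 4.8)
The plan is to apply a Bertini-type argument relative to the base locus of a pencil, using the fact that $Z(s_0)$ is already smooth. Recall that $|\mathfrak D|$ is the complete linear system attached to $L$, so its base locus is the set of four simple base points described in Proposition \ref{bp}, all lying in the fiber $G_0$. A pencil $J$ through $s_0$ is spanned by $s_0$ and some other section $s_1\in S$; its base locus is $Z(s_0)\cap Z(s_1)$, which always contains those four base points of $|\mathfrak D|$ but otherwise consists of the scheme-theoretic intersection of the smooth surface $Z(s_0)$ with a varying member $Z(s_1)$. So the statement reduces to the following: for $s_1$ in a dense open subset of $S$, the divisor $Z(s_1)|_{Z(s_0)}$ cut out on the smooth surface $Z(s_0)$ is a smooth curve, \emph{including} at the four fixed base points.

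First I would handle the locus away from the four base points of $|\mathfrak D|$. There $L$ is base-point-free, so the sections of $L$ restrict to a base-point-free linear system on the open set $Z(s_0)\setminus\{\text{4 points}\}$; by the classical Bertini theorem over $\C$ (characteristic zero), the general divisor in this restricted system is smooth on that open locus. This already shows the base locus of a general $J$ is smooth except possibly at the four points of $G_0\cap Z(s_0)$. Second, and this is where I expect the real work to lie, I would analyze smoothness of $Z(s_1)|_{Z(s_0)}$ at each of the four base points $P$. The natural tool is exactly the \'etale cover machinery of \S\ref{Ishida_subsec}: pull everything back along $\Phi:\tilde P\to\symE$, where the four base points of $|\mathfrak D|$ become (the $G$-orbits of) the explicit points $(1:0:0),(r_i:1:-1)$ in the fiber $\tilde P_{\tilde 0}$ computed in the proof of Proposition \ref{bp}, and where the sections $\psi_i$ lift to the explicit polynomials $\Psi_i$ (equivalently the $\chi_i$ near $\tilde P_{\tilde 0}$). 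Since $\Phi$ is \'etale, smoothness of the base locus of $J$ at $P$ is equivalent to smoothness of the pulled-back base locus at the corresponding point of $\tilde P_{\tilde 0}$, and the latter is a question about five explicit polynomials. Concretely, I would compute, at each of the four points, the $2\times 2$ matrix of partial derivatives (in the two transverse directions inside $\tilde P$, say the coordinate $t$ on $\tilde E$ and an affine coordinate on the fiber) of a general linear combination $\sum c_i\chi_i$, and check that it has rank $2$ for $(c_1,\dots,c_5)$ outside a proper subvariety; because $Z(s_0)$ is smooth there, one of these two directions can be taken tangent to $Z(s_0)$, and the condition becomes that $d(\sum c_i\chi_i)$ restricted to $T_P Z(s_0)$ is nonzero for general $c$. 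Equivalently: for a general pencil, i.e. general $s_1$, the curve $Z(s_1)$ meets $Z(s_0)$ transversally at $P$.

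The key point making this go through is that no single $\chi_i$ need be smooth at $P$; one only needs that the \emph{span} of their differentials (restricted to $T_P Z(s_0)$) is all of the $1$-dimensional cotangent space of $Z(s_0)$ at $P$, which fails only if \emph{every} $\psi_i$ is tangent to $Z(s_0)$ at $P$ — and this is impossible precisely because $s_0$ itself is one of the $\psi_i$-combinations and $Z(s_0)$ is smooth, so its own differential is nonzero in that direction, contradicting tangency of all sections. Thus the genericity of $J$ (equivalently of $s_1$) together with smoothness of $Z(s_0)$ forces transversality at each $P$. The main obstacle is bookkeeping: correctly identifying the two transverse directions at each of the four points in the charts $\tilde P|_U$ and $\tilde P|_{U'}$ (the point $(1:0:0)$ lives in $\tilde P|_{U'}$ with the $Z_0'$-normalization, while the three points $(r_i:1:-1)$ also lie in the fiber $\tilde P_{\tilde 0}$ and are best treated with the $\chi_i$), and then verifying that the differentials of the $\chi_i$ at $t=0$ are not all simultaneously proportional along $T_P Z(s_0)$. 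If the reader prefers a cleaner packaging, this last verification can be stated once and for all as: the rational map $Z(s_0)\dashrightarrow \ps^3$ defined by the pencils through $s_0$ (i.e. by $s/s_0$ for $s$ spanning a complement of $s_0$) is, after resolving the four base points, a morphism whose general fiber is smooth — but the \'etale-cover computation is the concrete route I would actually carry out.
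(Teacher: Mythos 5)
Your overall decomposition matches the paper's: Bertini handles the base locus away from the four base points of $\Dsys$ (the paper phrases this via the trace of $\Dsys$ on the smooth surface $Z(s_0)$), and the real issue is transversality at those four points. But your treatment of that crucial step has a genuine flaw. You correctly reduce it to: for general $s_1$, the restriction of $ds_1(q)$ to $T_q Z(s_0)$ is nonzero, and you correctly observe that this fails for all $s_1$ only if \emph{every} section of $L$ has differential at $q$ vanishing on $T_q Z(s_0)$. Your claimed reason this cannot happen, however, is wrong: you argue that $s_0$ itself is a linear combination of the $\psi_i$ and that smoothness of $Z(s_0)$ makes "its own differential nonzero in that direction.'' In fact $T_q Z(s_0)=\ker ds_0(q)$, so $ds_0(q)$ restricted to $T_q Z(s_0)$ is identically zero; $s_0$ is tangent to its own divisor at $q$ by definition, and so it contributes nothing toward ruling out the bad scenario. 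A priori it is entirely possible for a linear system to have a base point at which all members are mutually tangent, and then every pencil through $s_0$ would have singular base locus there — so some genuinely additional input is needed.

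That input is exactly the \emph{simplicity} of the four base points, which you quote from Proposition \ref{bp} but never use. Simplicity means the base scheme of $\Dsys$ is reduced of length one at $q$ (equivalently, two general members are smooth at $q$ and meet transversally there), so the differentials $d\psi_i(q)$ span the full cotangent space of $\symE$ at $q$; hence the locus of $s_1$ with $ds_1(q)$ proportional to $ds_0(q)$ is a proper linear subspace, and a general $s_1$ meets the fixed smooth $Z(s_0)$ transversally at $q$. This is precisely how the paper concludes, leaning on \cite[Lemma 3.3]{CC} through Proposition \ref{bp}. Your alternative plan — an explicit rank computation for the $\chi_i$ on the \'etale cover at the four points — would also suffice (it would in effect re-verify simplicity), but as written it is only a sketch, and the "soft'' argument you offer in its place does not close the gap.
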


\begin{proof}
The base locus of such a pencil may be regarded as an element of the trace $\mathfrak t$ on $Z(s_0)$ of the linear system $\Dsys$.  Since $Z(s_0)\in\Dsys$, the linear system $\mathfrak t$ on $Z(s_0)$ has exactly the same four base points as does $\Dsys$ on $\symE$.  By Bertini's theorem applied to $\mathfrak t$, we conclude that the base locus of a general pencil through $s_0$ is smooth away from the four base points in Proposition \ref{bp}.

Now consider one of these base points $q\in \symE$.  By the aforementioned proposition, $q$ is a simple base point of $\Dsys$, which is equivalent to saying that two general elements of $\Dsys$ are smooth at $q$ and intersect transversally there.  It follows that, for general $s_1\in S\setminus\set{s_0}$, $Z(s_1)$ is smooth at $q$ and intersects $Z(s_0)$ transversally, which implies that the base locus of the pencil generated by $s_0$ and $s_1$ is smooth at $q$.

Thus the base locus of a general pencil $J$ through $s_0$ is smooth both at and away from the four base points.
\end{proof}

\begin{lem}\label{euler_comp_lem}
Let $J\subseteq S$ be a pencil with smooth base locus $\Gamma$, and consider the blow-up
\[
\mathcal Y:=\Bl_{\Gamma}(\symE)=\set{(q,s)\in\symE\times J \ | \ q\in Z(s)}.
\]
of $\symE$ along $\Gamma$.  If $e$ denotes the topological Euler characteristic, then we have
\[
e(J)e(Z(s))-e(\mathcal Y)=42
\]
for general $s\in J$.
\end{lem}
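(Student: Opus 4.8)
The plan is to compute the three Euler characteristics $e(J)$, $e(Z(s))$ and $e(\mathcal Y)$ separately and then combine them; all three will be completely forced by the numerical invariants of \cc surfaces, so no information about the singular members of the pencil is needed.

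I would first handle $e(J)\,e(Z(s))$. Since $\Gamma$ is smooth, $\mathcal Y=\Bl_\Gamma(\symE)$ is a smooth irreducible threefold and the projection $\mathcal Y\to J$ is surjective; working over $\C$, generic smoothness shows that $Z(s)$ is a smooth surface for general $s\in J$, hence (Theorem~\ref{classification}) an admissible \cc surface. Combining Noether's formula $\chi(\mathcal O_{Z(s)})=\tfrac1{12}\bigl(K^2+e(Z(s))\bigr)$ with $\chi(\mathcal O_{Z(s)})=1-q+p_g=1$ and $K^2=3$ forces $e(Z(s))=9$; and $e(J)=e(\ps^1)=2$, so this term equals $18$.

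Next I would compute $e(\mathcal Y)$ from the stratification of $\mathcal Y$ into $\symE\setminus\Gamma$ (over which $\mathcal Y\to\symE$ is an isomorphism) and the exceptional divisor, which is a $\ps^1$-bundle over $\Gamma$ because $\Gamma$ has codimension $2$ in $\symE$; additivity of $e$ gives $e(\mathcal Y)=\bigl(e(\symE)-e(\Gamma)\bigr)+2\,e(\Gamma)=e(\symE)+e(\Gamma)$, and since $\AJ\colon\symE\to E$ is a $\ps^2$-bundle with $e(E)=0$, multiplicativity gives $e(\symE)=0$, so $e(\mathcal Y)=e(\Gamma)$. It remains to compute $e(\Gamma)$, and this is the one genuine calculation. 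Here I would fix a general $s_0\in J$, so that $X:=Z(s_0)$ is a smooth \cc surface containing the smooth curve $\Gamma$ cut out on $X$ by a second section of $L$; by Theorem~\ref{classification} and (\ref{D_defn}), $\Gamma$ is numerically equivalent on $X$ to $4K-F$. On $X$ one has $K^2=3$, $F^2=0$ (as $F$ is an Albanese fibre), and $K\cdot F=4$ (by adjunction on $X$ together with fibre genus $3$), so adjunction for $\Gamma\subseteq X$ gives
\[
2g_a(\Gamma)-2=\Gamma\cdot(\Gamma+K)=(4K-F)\cdot(5K-F)=20K^2-9\,K\cdot F+F^2=24,
\]
whence $e(\Gamma)=2-2g_a(\Gamma)=-24$; the equality $e(\Gamma)=2-2g_a(\Gamma)$ holds for any smooth curve, connected or not. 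Assembling everything, $e(J)\,e(Z(s))-e(\mathcal Y)=18-(-24)=42$.

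I do not expect a serious obstacle. The only points needing care are the use of generic smoothness to know that the general fibre $Z(s)$ really is an admissible \cc surface (so that Noether's formula applies with the stated invariants), and tracking the numerical equivalences $D_0|_X\equiv K$ and $G_0|_X\equiv F$ from Theorem~\ref{classification} carefully enough to identify the class of $\Gamma$ on $X$; the remaining manipulations are routine properties of the Euler characteristic.
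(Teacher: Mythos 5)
Your proof is correct and follows essentially the same route as the paper: $e(\mathcal Y)=e(\symE)+e(\Gamma)$ for the blow-up, $e(\symE)=0$, adjunction to get $e(\Gamma)=-24$, and Noether's formula for $e(Z(s))=9$. The only (harmless) difference is that you evaluate $\Gamma.(\Gamma+K)$ on the surface $X$ via $K^2=3$, $K\cdot F=4$, $F^2=0$, whereas the paper pushes the computation to $\symE$ and uses intersection numbers of $D_0$ and $G_0$ there; both give $24$, and your explicit appeal to generic smoothness for the general member of $J$ is a fine way to justify a step the paper leaves implicit.
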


\begin{proof}
Since $\mathcal Y=\Bl_{\Gamma}(\symE)$, it follows from \cite[pp.605--606]{GH} that $e(\mathcal Y)=e(\symE)+e(\Gamma)$.  We have $e(\symE)=0$ \cite{Mac} and, regarding $\Gamma$ as a smooth curve on $Z(s)\simeq \mathcal Y_s$ for a general $s\in J$, the adjunction formula gives
\[
-e(\Gamma)=2g(\Gamma)-2=\Gamma.(\Gamma+K),
\]
where $K$ is the canonical divisor on $Z(s)$.  If $\iota:Z(s)\hookrightarrow\symE$ denotes the embedding then, up to numerical equivalence, we have $\Gamma=\iota^\ast \mathfrak D=\iota^*(4D_0-G_0)$ and $K=\iota^*(D_0)$ \cite[p.395]{CC}.  Since $Z(s)$ is numerically equivalent to $4D_0-G_0$ in $\symE$, we calculate
\begin{eqnarray*}
\Gamma.(\Gamma+K) &=& \iota^*\bigl((4D_0-G_0).(5D_0-G_0)\bigr) \\
&=& \iota^*(20D_0^2-9D_0.G_0) \\
&=& (4D_0-G_0).(20D_0^2-9D_0.G_0) \\
&=& 24.
\end{eqnarray*}
Thus $e(\mathcal Y)=-24$.

Finally, we have $e(J) = e(\ps^1)=2$ and, as $Z(s)$ is an admissible \cc surface by Theorem \ref{classification}, Noether's formula gives $e(Z(s))=9$.  Therefore
\[
e(J)e(Z(s))-e(\mathcal Y)=2\cdot 9-(-24)=42.
\]
\end{proof}

\begin{cor}\label{dimR_cor}
We have $\dim R=3$.
\end{cor}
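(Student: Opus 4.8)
The upper bound is immediate: by Theorem~\ref{classification} the general divisor in $\Dsys$ is smooth, so $R$ is a proper Zariski-closed subset of $S\simeq\ps^4$ and hence $\dim R\le 3$. The plan for the reverse inequality $\dim R\ge 3$ is to combine Lemmas~\ref{smooth_axis_lemma} and~\ref{euler_comp_lem} with a dimension count, arguing by contradiction: if $R$ had dimension at most $2$, then a sufficiently general pencil through a fixed smooth member of $\Dsys$ would miss $R$ entirely, turning the associated blow-up $\mathcal Y\to J$ into a smooth fibration and forcing $e(\mathcal Y)=e(J)\,e(Z(s))$, in violation of the identity $e(J)\,e(Z(s))-e(\mathcal Y)=42$ of Lemma~\ref{euler_comp_lem}.

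In detail, I would first fix some $s_0\in S$ with $Z(s_0)$ smooth, so that $s_0\notin R$, and suppose for contradiction that $\dim R\le 2$. The pencils in $S$ through $s_0$ are parametrized by a $\ps^3$, and linear projection away from $s_0$ defines a morphism $R\to\ps^3$ (well defined precisely because $s_0\notin R$) whose image has dimension at most $\dim R\le 2<3$; the image is exactly the locus of pencils through $s_0$ that meet $R$, so a general pencil through $s_0$ is disjoint from $R$. On the other hand, by Lemma~\ref{smooth_axis_lemma} a general pencil $J$ through $s_0$ has smooth base locus $\Gamma$, so I may choose $J$ general enough to have both properties at once, the relevant conditions each cutting out a nonempty Zariski-open subset of the $\ps^3$ of pencils through $s_0$. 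For such a $J$, Lemma~\ref{euler_comp_lem} applies to $\mathcal Y=\Bl_\Gamma(\symE)$, whose projection to $J$ has fiber $Z(s)$ over each $s\in J$; since $J\cap R=\emptyset$, every one of these fibers is smooth. As $\mathcal Y$ is smooth (a blow-up of a smooth variety along a smooth center) and $J$ is smooth, the projection $\mathcal Y\to J$ is a proper submersion, hence a $C^\infty$ fiber bundle by Ehresmann's theorem, and therefore $e(\mathcal Y)=e(J)\,e(Z(s))$. This contradicts the equality $e(J)\,e(Z(s))-e(\mathcal Y)=42\ne 0$ of Lemma~\ref{euler_comp_lem}. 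Hence $\dim R\ge 3$, and together with the upper bound this gives $\dim R=3$.

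I do not expect a serious obstacle at this stage, since the genuine content has already been absorbed into the two preceding lemmas; the only points needing a little care are formal ones — verifying that the two ``general pencil'' conditions can be satisfied simultaneously (both are nonempty open conditions on the $\ps^3$ of pencils through $s_0$, hence so is their intersection) and invoking the fact that a proper submersion of smooth manifolds is a locally trivial fiber bundle, so that topological Euler characteristics multiply. The delicate inputs, by contrast, are the smoothness of the base locus coming from Lemma~\ref{smooth_axis_lemma} (which rests on the base-point analysis of Proposition~\ref{bp}) and the Euler-characteristic bookkeeping carried out in Lemma~\ref{euler_comp_lem}.
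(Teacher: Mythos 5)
Your proposal is correct and follows essentially the same route as the paper: assume $\dim R\le 2$, take a general pencil with smooth base locus (Lemma~\ref{smooth_axis_lemma}) that misses $R$, and contradict the Euler characteristic identity of Lemma~\ref{euler_comp_lem} via multiplicativity of $e$ for the resulting smooth fibration. The only difference is that you spell out the details the paper leaves implicit (the projection-from-$s_0$ argument showing a general pencil avoids a codimension-$\ge 2$ locus, and Ehresmann's theorem), which are exactly the right justifications.
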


\begin{proof}
Let $J\subseteq S$ be a general pencil with base locus $\Gamma\subseteq \symE$.  Then the total space of $J$ is the blow-up $\mathcal Y$ defined in Lemma \ref{euler_comp_lem}, whose second projection we denote by $\rho: \mathcal Y\to J$.

Clearly $\dim R\leq 3$, so suppose for contradiction that $\dim R\leq 2$.  Then for a general pencil $J$, the total space $\mathcal Y$ is smooth by Lemma \ref{smooth_axis_lemma} and $\rho:\mathcal Y\to J$ is a smooth fibration.  This implies
\[
e(\mathcal Y) = e(J)e(Z(s))
\]
for any $s\in J$, contradicting Lemma \ref{euler_comp_lem}.
\end{proof}

Given a point $q\in\symE$, consider the collection of $s\in R$ such that $q\in\Sing Z(s)$.  This collection is a projective subspace of $S$, which is seen as follows.  Let $(x_1,x_2,x_3)$ be local parameters of $\symE$ at $q$ and let $f_i$ be the local equation for $\psi_i$ in a neighborhood of $q$.  Then the divisor $Z(a_1\psi_1+\ldots+a_5\psi_5)$ has a singular point at $q$ if and only if the column vector $(a_1,\ldots,a_5)^t$ belongs to the kernel of the matrix
\begin{equation}\label{sing_matrix}
\left[
\begin{array}{ccc}
f_1(q) & \ldots & f_5(q) \\
(\partial f_1/\partial x_1)(q) & \ldots & (\partial f_5 /\partial x_1)(q) \\
(\partial f_1/\partial x_2)(q) & \ldots & (\partial f_5 /\partial x_2)(q) \\
(\partial f_1/\partial x_3)(q) & \ldots & (\partial f_5 /\partial x_3)(q) \\
\end{array}
\right].
\end{equation}
Note that the kernel of this matrix is independent of the choice of local parameters $x_i$.  The projectivization of this kernel is then the projective subspace of $S$ parametrizing elements with a singularity at $q$.

\begin{lem}
There exists a point $q_0\in \symE$ for which there is exactly one $s\in R$ satisfying $q_0\in\Sing Z(s)$.
\end{lem}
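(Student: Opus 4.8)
The plan is to recast the lemma as the statement that the matrix (\ref{sing_matrix}) attains its maximal possible rank $4$ at some point of $\symE$, and then to produce such a point explicitly using the \'etale cover $\tilde P$ and Ishida's equations. For $q\in\symE$ write $M_q$ for the $4\times 5$ matrix (\ref{sing_matrix}). As explained just above, $\ker M_q\neq 0$ and its projectivization $\ps(\ker M_q)\subseteq S$ is exactly the locus of $s$ with $q\in\Sing Z(s)$; since any such $s$ lies in $R$, one has $\set{s\in R\ |\ q\in\Sing Z(s)}=\ps(\ker M_q)$, and this is a single point precisely when $\dim\ker M_q=1$, i.e.\ when $\rank M_q=4$. (Equivalently, the $1$-jet evaluation map at $q$, with values in the $4$-dimensional space of $1$-jets of sections of $L$ at $q$, is surjective.) Since $L$ is not very ample this surjectivity cannot be invoked abstractly, so a specific $q_0$ must be exhibited.

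I would then transport the problem to $\tilde P$. Fix a degree-$3$ isogeny $\varphi\colon\tilde E\to E$ --- one exists for every $E$ --- and the notation of \S\ref{Ishida_subsec}. Because $\Phi\colon\tilde P\to\symE$ is \'etale, it identifies the local ring of $\symE$ at a point $q_0=\Phi(\tilde q_0)$ with that of $\tilde P$ at $\tilde q_0$, while under (\ref{Phi_isom}) the basis $\psi_1,\dots,\psi_5$ of $H^0(\symE,L)$ pulls back to $\Psi_1,\dots,\Psi_5$; hence $\rank M_{q_0}$ equals the rank of the $4\times 5$ matrix of $1$-jets of $\Psi_1,\dots,\Psi_5$ at $\tilde q_0$, computed in any local coordinates on $\tilde P$ and trivialization of $\Phi^\ast L$. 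So it suffices to find one $\tilde q_0\in\tilde P$ at which these $1$-jets span the $4$-dimensional $1$-jet space.

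For the computation I would take $\tilde q_0$ in the fiber $\tilde P_{\tilde 0}$ and work in the chart $\tilde P|_{U'}$ with local coordinates $(t,z_1,z_2)$, $z_i=Z_i/Z_0'$, so that the local equations of $\psi_1,\dots,\psi_5$ are the functions $\chi_1,\dots,\chi_5$ whose Taylor expansions appear in \S\ref{Ishida_subsec} (one trivializes $\Phi^\ast L$ near $\tilde q_0$ by $t\,Z_0'^4$ to make this precise). Differentiating those expansions and evaluating at the point with $t=0$ and $(z_1,z_2)=(a,0)$, $a\neq 0$ --- which, as one checks (and as the computation confirms a posteriori), is none of the four base points $(1\colon0\colon0)$, $(r_1\colon1\colon-1)$, $(r_2\colon1\colon-1)$, $(r_3\colon1\colon-1)$ of $\Dsys$ from Proposition \ref{bp} --- one forms the $4\times 5$ matrix whose rows are the value and the three partial derivatives of the $\chi_i$ and computes the $4\times 4$ minor on the first four columns. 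Since $z_2=0$ annihilates most of the entries, the determinant collapses, up to sign, to $12\,\beta^2 a^{9}$, which is nonzero because $\beta\neq 0$ (as in Proposition \ref{bp}, $C_1$ being $3$-torsion). Hence $\rank M_{q_0}=4$ at $q_0=\Phi(\tilde q_0)$, which proves the lemma; note the argument is uniform in $E$. The only genuine obstacle is organizational: keeping the dehomogenization by $Z_0'$, the trivialization of $\Phi^\ast L$ and the choice of chart consistent, and checking that $\tilde q_0$ avoids the base locus --- once this is done the $4\times 4$ determinant is small and its nonvanishing transparent.
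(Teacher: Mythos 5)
Your proposal is correct and follows essentially the same route as the paper: lift the question to the \'etale cover $\tilde P$ via (\ref{Phi_isom}), use the expansions of the $\chi_i$ near the fiber $\tilde P_{\tilde 0}$, and verify by an explicit linear-algebra computation that the $4\times 5$ jet matrix has one-dimensional kernel at a chosen point. The only (immaterial) difference is the choice of point and that you certify rank $4$ by the nonvanishing minor $\pm 12\beta^2 a^9$, whereas the paper evaluates at $(0,(0\colon 1\colon 1))$ and computes the kernel directly; your determinant and derivative evaluations check out.
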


\begin{proof}
Lifting the problem to the covering space $\tilde P$, it suffices to prove that there is a point $\tilde q_0\in \tilde P$ for which there is a unique $\tilde s\in \ps (H^0(\tilde P,\Phi^\ast L)^G)$ such that $\tilde q_0\in \Sing Z(\tilde s)$.

Consider the point $\tilde q_0 = (t,(Z_0':Z_1:Z_2))=(0,(0:1:1))\in\tilde P_{\tilde 0}$.  Letting $u=Z_0'/Z_2$, $v=Z_1/Z_2-1$, the three functions $(t,u,v)$ form local parameters of $\tilde P$ at $\tilde q_0$.  In these parameters, the local equation of $\Psi_i$ is
\[
g_i = \chi_i(t,u,v+1,1).
\]
In this case the analogue of the matrix (\ref{sing_matrix}) is
\[
\left[
\begin{array}{ccc}
g_1(\tilde q_0) & \ldots & g_5(\tilde q_0) \\
(\partial g_1/\partial t)(\tilde q_0) & \ldots & (\partial g_5 /\partial t)(\tilde q_0) \\
(\partial g_1/\partial u)(\tilde q_0) & \ldots & (\partial g_5 /\partial u)(\tilde q_0) \\
(\partial g_1/\partial v)(\tilde q_0) & \ldots & (\partial g_5 /\partial v)(\tilde q_0) \\
\end{array}
\right] 
 = 
 \left[
\begin{array}{ccccc}
0 & 0 & -2\beta & 2\beta & 0 \\
2\mu & 0 & \mu & \mu & -4\beta^2 \\
0 & 2 & 0 & 0 & 0\\
8\beta & 0 & -2\beta & 6\beta & 0 \\
\end{array}
\right],
\]
and $Z(a_1\Psi_1+\ldots a_5\Psi_5)$ is singular at $\tilde q_0$ if and only if $(a_1,\ldots,a_5)^t$ belongs to the kernel.  One calculates the kernel to be the span of $(-2\beta^2, 0, 4\beta^2,4\beta^2, \mu)^t$.
\end{proof}

In light of the preceding lemma, there exists a rational map
\[
\begin{array}{ccccl}
\eta &:& \symE &\dashrightarrow& R \\
&& q&\mapsto& (\text{the unique $s$ such that $Z(s)$ is singular at $q$}).
\end{array}
\]
More algebraically, this map is defined using the matrix (\ref{sing_matrix}).  If $m_i$ denotes the 4-by-4 minor of this 4-by-5 matrix obtained by omitting the $i$th column, and one or more $m_i$ is nonzero, then we define
\begin{equation}\label{lambda_def}
\eta(q)=\overline{\sum_{i=1}^5 (-1)^i m_i \psi_i} \in R,
\end{equation}
which is equal to the projectivization of the one-dimensional kernel.  The rational map $\eta$ is defined on an open subset of $\symE$, and we let $\hat R$ denote the Zariski-closure of its image in $R$ under $\eta$.  Since $\symE$ is irreducible, so is $\hat R$.

\subsection{} We now distinguish certain pencils in $S$.  As the definition and terminology will indicate, the pencils in question have several properties in common with Lefschetz pencils of hyperplane sections.

\begin{defn}
We will say that a pencil $J\subseteq S$ is an \emph{L-pencil} if it satisfies each of the following: 
\begin{enumerate}
\item[(L1)] The base locus in $\symE$ of $J$ is smooth.
\item[(L2)] For all $s\in J$, the divisor $Z(s)$ contains at most isolated singularities, and $Z(s)$ is smooth for general $s\in J$.
\item[(L3)] There are at least $42$ values of $s\in J$ such that $Z(s)$ is singular.
\end{enumerate}
\end{defn}

\begin{prop}\label{Lprop}
Let $J\subseteq S$ be an L-pencil.  Then there are in fact exactly $42$ values of $s\in J$ such that $Z(s)$ is singular and, for each such $s$, the singular locus of $Z(s)$ consists of one ordinary double point.
\end{prop}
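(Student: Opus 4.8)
The plan is to run an Euler-characteristic bookkeeping argument on the total space of the L-pencil, exactly parallel to the classical Lefschetz-pencil computation. Let $J\subseteq S$ be an L-pencil with base locus $\Gamma\subseteq\symE$, which is smooth by (L1). Then the total space of $J$ is the blow-up $\mathcal Y=\Bl_\Gamma(\symE)$ of Lemma \ref{euler_comp_lem}, with second projection $\rho:\mathcal Y\to J$. By (L2), every fiber $\rho^{-1}(s)=Z(s)$ has at worst isolated singularities, and the general fiber is smooth. Let $\Sigma\subseteq J$ be the finite set of $s$ for which $Z(s)$ is singular; by (L3) we have $|\Sigma|\geq 42$, and the goal is to force equality together with a count of $1$ ordinary double point per singular fiber.

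The key input is a local Euler-characteristic bound for an isolated surface singularity. For $s\in\Sigma$, write $Z(s)$ as a fiber of $\rho$; comparing $Z(s)$ with a nearby smooth fiber $Z(s')$, each isolated singular point $P\in\Sing Z(s)$ contributes a nonnegative correction to the Euler characteristic, and this correction equals $1$ precisely when $P$ is an ordinary double point (a node), and is strictly greater than $1$ for any worse isolated singularity. Concretely, I would stratify $\mathcal Y$ by the locus of singular points of fibers and use the multiplicativity of $e$ in locally trivial fibrations over the strata (the same device used in Lemma \ref{euler_comp_lem} via \cite{GH} and \cite{Mac}) to write
\[
e(\mathcal Y) = e(J)\,e(Z(s_{\mathrm{gen}})) \;-\; \sum_{s\in\Sigma}\ \sum_{P\in\Sing Z(s)} \mu_P,
\]
where each $\mu_P\geq 1$, with $\mu_P=1$ if and only if $P$ is a node. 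By Lemma \ref{euler_comp_lem}, the left-hand side forces $\sum_{s\in\Sigma}\sum_P \mu_P = 42$. Combined with (L3), which gives $|\Sigma|\geq 42$, and the fact that each $s\in\Sigma$ contributes at least one point $P$ with $\mu_P\geq 1$, we get $\sum_{s\in\Sigma}\sum_P\mu_P\geq|\Sigma|\geq 42$; equality throughout forces $|\Sigma|=42$, exactly one singular point per singular fiber, and $\mu_P=1$ for each, i.e.\ that point is an ordinary double point.

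The step I expect to be the main obstacle is justifying the local Euler-characteristic inequality $\mu_P\geq 1$ with equality exactly for nodes, in a form clean enough to plug into the global count. For a node this is the classical vanishing-cycle computation (the Milnor fiber of $x^2+y^2+z^2$ has the homotopy type of $S^2$, contributing $+1$), and the inequality for worse isolated hypersurface or rational-double-point singularities follows from positivity of the Milnor number / the fact that the local Milnor fiber is a bouquet of $2$-spheres of positive rank; but one must be slightly careful because the family $\rho:\mathcal Y\to J$ is not a priori a family of hypersurfaces near a singular fiber, so I would either work locally analytically (embedding a neighborhood of $P$ in $\mathcal Y$ into a smooth ambient space and realizing $Z(s)$ near $P$ as a hypersurface, which is fine since $\symE$ is smooth and $Z(s)$ is a divisor) or invoke the general upper-semicontinuity of $e$ for fibers with isolated singularities together with a node-detection criterion. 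Once $\mu_P\geq 1$ with the node characterization is in hand, the rest is the short arithmetic above.

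A remark on why equality in (L3) is attainable and not vacuous: Corollary \ref{dimR_cor} gives $\dim R=3$, so a general pencil meets $R$ in finitely many points, and the computations feeding the L-pencil hypotheses (the single-node locus $\hat R$) are designed so that such a pencil meets only the codimension-one component of $R$ transversally; the proposition then certifies a posteriori that every singular member of an L-pencil is as mild as possible.
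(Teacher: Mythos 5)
Your proposal is correct and follows essentially the same route as the paper: both arguments compare $e(\mathcal Y)$ with $e(J)e(Z(s))$ via Lemma \ref{euler_comp_lem} and interpret the discrepancy $42$ as a sum of Milnor numbers of the isolated critical points of $\rho:\mathcal Y\to J$, so that (L3) forces exactly $42$ singular members, each with a single node. The only difference is bookkeeping: the paper cites \cite[14.1.5(d)]{Ful} for a zero-cycle supported on the critical locus whose degree is $e(J)e(Z(s))-e(\mathcal Y)$ and whose local contributions are the Milnor numbers, whereas you derive the same identity by stratifying and using the local vanishing-cycle computation, which works since $\mathcal Y$ is smooth by (L1) and the fibers are local hypersurfaces in it.
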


\begin{proof}
Consider the total space $\mathcal Y:=\Bl_{\Gamma}(\symE)$ with its projection $\rho:\mathcal Y\to J$.
By (L1), $\mathcal Y$ is smooth.  Those points $(q,s)\in\mathcal Y$ such that $q\in\Sing Z(s)$ form the collection of critical points of $\rho$; thus $\rho$ has finitely many critical points $\set{y_i}$ by (L2).  If $y_i=(q_i,s_i)$ is such a critical point, let $\mu_i$ denote the Milnor number of the isolated singularity that $Z(s_i)$ possesses at the point $q_i$.  Recall that $\mu_i$ is a positive integer with the property that $\mu_i=1$ if and only if the singularity is an ordinary double point.  We have $\sum_i \mu_i \geq 42$ by (L3).  To prove the proposition, it will suffice therefore to show that $\sum_i \mu_i = 42$.

According to \cite[14.1.5(d)]{Ful} there is a certain zero-cycle $\gamma$ on $\mathcal Y$ satisfying each of the following:
\begin{enumerate}
\item $\gamma$ is supported on the set of critical points of $\rho:\mathcal Y\to J$.
\item Let $y_i\in\mathcal Y$ be a critical point of $\rho$.  Then the restriction of $\gamma$ to $\set{y_i}$ is the zero-cycle $\mu_i y_i$.
\item One has
\[
\deg(\gamma)=e(J)e(Z(s))-e(\mathcal Y),
\]
where $s\in J$ is a general.
\end{enumerate}
Thus we obtain
\[
e(J)e(Z(s))-e(\mathcal Y)=\sum_i \mu_i.
\]
Applying Lemma \ref{euler_comp_lem} gives $\sum_i \mu_i=42$.
\end{proof}

\begin{prop}\label{more_Lprop}
Suppose there exists an L-pencil $J\subseteq S$ with the following property: Whenever $q\in \Sing Z(s)$ for some $s\in J$, the rational map $\eta:\symE\dashrightarrow R$ is defined at $q$.  Then the following hold:
\begin{enumerate}
\item[(a)] The irreducible component $\hat R$ is a hypersurface of degree $42$ in $S$.
\item[(b)] The only $3$-dimensional irreducible component of $R$ is $\hat R$.
\item[(c)] For all $s$ in a Zariski-dense open subset of $\hat R$, $\Sing Z(s)$ consists of one ordinary double point.
\end{enumerate}
\end{prop}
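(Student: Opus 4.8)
The plan is to use the given L-pencil $J$ the way a Lefschetz pencil is used against a dual variety: its singular members get forced onto $\hat R$, the hypothesis on $\eta$ pins down the local structure of $R$ at each of them, and transversality of $J$ with $\hat R$ computes the degree. By Proposition~\ref{Lprop}, $J$ has exactly $42$ distinct members $s_1,\dots,s_{42}$ with $Z(s_i)$ singular, and for each $i$ the set $\Sing Z(s_i)$ is a single ordinary double point, say at $q_i\in\symE$. By hypothesis $\eta$ is defined at each $q_i$, and since $\eta(q_i)$ is by construction the unique $s\in S$ for which $Z(s)$ is singular at $q_i$, we must have $s_i=\eta(q_i)$. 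Thus each $s_i$ lies in $\hat R$; as $J\cap R=\{s_1,\dots,s_{42}\}$ and $\hat R\subseteq R$, it follows that $J\cap\hat R=\{s_1,\dots,s_{42}\}$ as well.

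Now fix $i$ and study $R$ near $s_i$. That $\eta$ is defined at $q_i$ means the matrix (\ref{sing_matrix}) has rank $4$ at $q_i$; equivalently, the four equations ``$s$, and each of its three first partials, vanishes at $q$'' that define the incidence variety $W:=\{(q,s)\in\symE\times S:\ q\in\Sing Z(s)\}$ have linearly independent differentials at $(q_i,s_i)$. Hence $W$ is smooth of dimension $3$ there, and since the $4\times4$ block of $s$-derivatives is invertible, $W$ coincides near $(q_i,s_i)$ with the graph of the morphism $\eta$. Moreover, since $Z(s_i)$ has an \emph{ordinary} double point at $q_i$, the Jacobian ideal of $Z(s_i)$ at $q_i$ is the maximal ideal, so the scheme-theoretic fiber of $W\to S$ over $s_i$ is the reduced point $\{q_i\}$; thus $W\to S$ is unramified, hence a local immersion, at $(q_i,s_i)$. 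Therefore $R$ (the image of the proper morphism $W\to S$) is smooth of dimension $3$ at $s_i$ and coincides there with $\hat R$; consequently the irreducible variety $\hat R$ has dimension $3$, i.e.\ is a hypersurface in $S=\ps^4$. This gives the first assertion of (a).

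For the degree, I first note that $q_i$ does not lie on the base locus $\Gamma$ of $J$: any point of $\mathcal Y=\Bl_\Gamma(\symE)$ over $\Gamma$ sits on the exceptional $\ps^1$, on which the projection $\rho\colon\mathcal Y\to J$ restricts to an isomorphism onto $J$ and is hence not a critical point of $\rho$, whereas $(q_i,s_i)$ \emph{is} a critical point (the critical points of $\rho$ being exactly the pairs $(q,s)$ with $q\in\Sing Z(s)$, since $\mathcal Y$ is smooth by (L1)). Consequently $q_i$ is not a base point of $\Dsys$, so the evaluation $v\mapsto v(q_i)$ is a nonzero linear form on $H^0(\symE,L)$, and every member $s_j\neq s_i$ of $J$ satisfies $s_j(q_i)\neq0$ (otherwise $q_i\in Z(s_i)\cap Z(s_j)=\Gamma$). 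A first-order deformation computation around the ordinary double point of $Z(s_i)$ shows that every $v\in T_{s_i}R$ satisfies $v(q_i)=0$; since $T_{s_i}J$ is spanned by the class of $s_j$, which does not, the line $J$ meets the smooth hypersurface $\hat R$ transversally at $s_i$, so the local intersection multiplicity there is $1$. As $J\not\subseteq\hat R$, B\'ezout now gives $\deg\hat R=\sum_{i=1}^{42} i(J,\hat R;s_i)=42$, completing (a).

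Part (b) follows at once: if $R'$ were a $3$-dimensional irreducible component of $R$ other than $\hat R$, then each $s_i$, being a smooth point of $R$ and hence lying on a unique component, would not lie on $R'$, forcing $J\cap R'=\emptyset$ — impossible, since a line and a hypersurface in $\ps^4$ always meet. For (c), on the dense open subset $V\subseteq\hat R$ where $Z(s)$ has only isolated singularities (nonempty, as it contains every $s_i$), the total Milnor number $\mu(s)=\sum_{q\in\Sing Z(s)}\mu(Z(s),q)$ is upper semicontinuous in $s$ and everywhere $\geq1$; since the closed set $\{\mu\geq2\}$ misses each $s_i$, the locus $\{\mu=1\}$ is a dense open subset of $\hat R$, and for $s$ there $\Sing Z(s)$ consists of a single ordinary double point. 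The most delicate part of this plan is the local analysis in the second and third paragraphs: recognizing the hypothesis on $\eta$ as the rank-$4$ condition on (\ref{sing_matrix}) (which makes $W$ near $(q_i,s_i)$ behave like a conormal variety), and checking — via the exceptional divisor of $\mathcal Y$ — that an ordinary double point of a member of $J$ can never occur at a base point of the pencil.
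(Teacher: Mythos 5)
Your proposal is correct in substance but takes a genuinely different route from the paper. The paper gets $\dim\hat R=3$ from the existence of a zero-dimensional fiber of $\eta$ plus semicontinuity of fiber dimension, and then obtains the degree and (b) indirectly: since the given L-pencil meets $\hat R$ in its $42$ singular members, $\deg R\geq\deg\hat R\geq 42$; a \emph{general} pencil (smooth base locus by Lemma \ref{smooth_axis_lemma}, general position with respect to $R$) is therefore again an L-pencil, and Proposition \ref{Lprop} forces $\deg R=\deg\hat R=42$, whence uniqueness of the $3$-dimensional component; (c) is deduced from the finite map $\hat R\to\mathbb P^3$ onto pencils through a fixed smooth member, a dense open subset of which are L-pencils. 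You instead work locally at the $42$ nodes of the given pencil: reading ``$\eta$ defined at $q_i$'' as the rank-$4$ condition on (\ref{sing_matrix}) makes the incidence variety smooth of dimension $3$ and a graph of $\eta$ there, the ordinary double point makes the projection to $S$ unramified, so $R=\hat R$ is smooth at each $s_i$, and the classical tangent-hyperplane computation ($T_{s_i}\hat R\subseteq\set{v: v(q_i)=0}$ while $s_j(q_i)\neq 0$) gives transversality, hence $\deg\hat R=42$ directly by B\'ezout; (b) then follows because a smooth point of $R$ lies on a unique component and a line meets every hypersurface in $\ps^4$; (c) follows from semicontinuity of the total Milnor number. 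Your route yields a sharper local statement ($\hat R$ smooth at the $s_i$, $J$ transverse to it) and computes the degree from the single given pencil, at the price of more local deformation theory; the paper's route never leaves Proposition \ref{Lprop} and Bertini-type genericity and needs no information about the local structure of $R$.

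Two small repairs. First, your argument that $q_i\notin\Gamma$ is circular as written: the characterization ``critical points of $\rho$ are exactly the pairs $(q,s)$ with $q\in\Sing Z(s)$'' is precisely what fails over the base locus, since at a point of $\Gamma$ the vertical direction $(0,\alpha)$ is always tangent to $\mathcal Y$ and maps onto $T J$, so such a point is never critical whether or not the member is singular there. The fact you need is immediate from (L1) alone: if $Z(aF+bG)$ were singular at $q\in\Gamma$, then $a\,dF+b\,dG=0$ at $q$, so $dF,dG$ are dependent and the one-dimensional scheme $\Gamma=V(F,G)$ is not smooth at $q$. Second, in (c) the conservation argument makes $\set{\mu\geq 2}$ closed only in the analytic topology; to get a \emph{Zariski}-dense open subset you should add constructibility (a constructible set that is analytically closed is Zariski closed), or replace the Milnor number by the total Tjurina number, i.e.\ the fiber length of the scheme-theoretic critical locus over $\hat R$, which is Zariski upper semicontinuous and still characterizes a single ordinary double point by the value $1$. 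Both repairs are routine and do not affect the correctness of your approach.
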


\begin{proof}
If for some $s\in J$ the divisor $Z(s)$ is singular at $q\in\symE$, then by assumption $s$ belongs to the image of the rational map $\eta$.  Hence one may consider the fiber $\eta^{-1}(s)$, which is necessarily a subset of $\Sing Z(s)$.  Since $J$ is an L-pencil, $\Sing Z(s) = \set{q}$ by Proposition \ref{Lprop}, and we conclude that there exist 0-dimensional fibers of $\eta$.  This implies, by semi-continuity of the fiber dimension, that $\dim \hat R=\dim \symE = 3$.  Thus $\hat R$ is a hypersurface in $S$.

Next consider the correspondence
\[
T = \set{ (q,s)\in \symE\times R \ | \ q\in\Sing Z(s) }
\]
with second projection $\pr_2:T\to R$, which is necessarily surjective.  We endow $T$ with its reduced subscheme structure in $\symE\times R$.   Let $R'$ be any  3-dimensional irreducible component of $R$.  Since $\pr_2$ is surjective, the component $R'$ must be dominated by one or more irreducible components of $T$.  If $T'$ is such a component, then $T'$ is closed in $T$, so by properness of $\pr_2$ its image is also closed; thus $\pr_2|_{T'}: T'\to R'$ is surjective.  As $R'$ is a hypersurface in $S$, the given L-pencil $J$ intersects it; at such an intersection point $\sigma$, the fiber $\pr_2^{-1}(\sigma) = \Sing Z(\sigma)\times\set{\sigma}$ is 0-dimensional by Proposition \ref{Lprop}.  It follows by semi-continuity of fiber dimension that, for general $s\in R'$, $\Sing Z(s)\simeq\Sing Z(s)\times\set{s}  = \pr_2^{-1}(s)$ is 0-dimensional.

Thus we have established that the divisors belonging to a general pencil $J'\subseteq S$ have at most isolated singularities, since $J'$ is in general position with respect to $R$.  Moreover, $J'$ must have at least 42 values of $s$ for which $Z(s)$ is singular; indeed, by the given hypothesis on the given L-pencil $J$, we may conclude
\[
\deg R \geq \deg \hat R \geq 42.
\]
(Here $\deg R$ denotes the sum of the degrees of all $3$-dimensional irreducible components of $R$ as hypersurfaces in $S$.)  Finally, the base locus of $J'$ is smooth by Lemma \ref{smooth_axis_lemma}.  In summary, this means that the general pencil $J'$ will be an L-pencil, and therefore by Proposition \ref{Lprop} we may conclude that $\deg R=\deg \hat R=42$.  Thus $\hat R$ is the only 3-dimensional component of $R$.  This proves (a) and (b).

To finish, choose a point $s_0\in S\setminus R$, so that $Z(s_0)$ is smooth.  After choosing an identification between $\mathbb P^3$ and the set of pencils in $S$ passing through $s_0$, we obtain a finite map $\ell: \hat R\to \mathbb P^3$ sending a point in $\hat R$ to the pencil containing itself and $s_0$.  Then, by Lemma \ref{smooth_axis_lemma} and the preceding paragraphs, there is a dense open subset of $\mathbb P^3$ consisting of pencils that have smooth base locus and are in general position with respect to $\hat R$, i.e., a dense open subset of L-pencils.  By Proposition \ref{Lprop}, $\Sing Z(s)$ consists of one ordinary double point whenever $s$ belongs to the inverse image under $\ell$ of this open subset.  This proves (c).
\end{proof}

\subsection{}

We now exhibit a specific elliptic curve $E_1$ for which one may verify the hypotheses of Proposition \ref{more_Lprop}.  Let $E_1$ denote the curve given by the Weierstrass equation
\begin{eqnarray*}
E_1 &:& y^2=x^3+x^2-59x-783/4
\end{eqnarray*}
and let $\tilde E_1$ denote
\begin{equation*}
\tilde E_1\ : \ y^2=w(x):=x^3+x^2+x-3/4.
\end{equation*}
Then there is an isogeny $\varphi:\tilde E_1\to E_1$ such that $\ker\varphi=\set{\tilde 0,C_1,C_2}$, where $C_1=(\alpha,\beta):=(1,3/2)$.
Consider the pencil $J_1\subseteq S$ defined parametrically by
\begin{equation}\label{J1_defn}
J_1:=\set{ a\psi_1+b(\psi_3-\psi_4)\ | \ (a:b)\in\ps^1}.
\end{equation}

\begin{prop}\label{computations}
The pencil $J_1$ of divisors on $\symEone$ is an L-pencil.  Moreover, if $q\in \Sing Z(s)$ for $s\in J_1$, then the rational map $\eta: \symEone\dashrightarrow R$ is defined at $q$.

\end{prop}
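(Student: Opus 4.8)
The plan is to verify every assertion by explicit computation on the étale cover $\tilde P$, in the spirit of the proof of Proposition~\ref{bp}, delegating the heavy algebra to \textsc{Singular}. The first step is the reduction to $\tilde P$. Since $\Phi\colon\tilde P\to\symEone$ is finite étale, Galois with group $G$ of order $3$, and $\Phi^\ast$ carries $\psi_i$ to $\Psi_i$ (see \eqref{Phi_isom}, \eqref{psi_defn}), the pullback of $J_1$ is the pencil spanned by $\Psi_1$ and $\Psi_3-\Psi_4$. For $s\in J_1$ with preimage $\tilde s$ one has $Z(\tilde s)=\Phi^{-1}(Z(s))$, so $Z(s)$ is smooth iff $Z(\tilde s)$ is, and $\Sing Z(\tilde s)=\Phi^{-1}(\Sing Z(s))$ is a union of $G$-orbits; moreover local parameters at $q\in\symEone$ and the local equations of the $\psi_i$ pull back to local parameters and local equations of the $\Psi_i$ at each $\tilde q\in\Phi^{-1}(q)$, so the matrix \eqref{sing_matrix} at $q$ equals its analogue at $\tilde q$, and $\eta$ is defined at $q$ exactly when its analogue is defined at $\tilde q$. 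Hence (L1), (L2), (L3) and the statement about $\eta$ can all be tested on $\tilde P$, and by $G$-equivariance it is enough to work in the charts $\tilde P|_U$ and $\tilde P|_{U'}$ of \S\ref{Ishida_subsec} together with their $G$-translates: on the former I would use the coordinates $(Z_0\colon Z_1\colon Z_2)$ and the equations $\Psi_i$, on the latter the coordinates $(Z_0'\colon Z_1\colon Z_2)$ and the equations $\chi_i$, with $\alpha=1$, $\beta=3/2$ and $\mu=w'(1)=6$. Over $\tilde P|_U$ the $G$-action permutes the three standard fibre charts, so a single one suffices, and I would adjoin the Weierstrass relation $y^2=w(x)$ of $\tilde E_1$ in order to treat the $2$-torsion points (where $x$ fails to be a local parameter) uniformly; over $\tilde P|_{U'}$, whose fibre $\tilde P_{\tilde 0}$ together with its $G$-translates accounts for the three fibres $\tilde P_{\tilde 0},\tilde P_{C_1},\tilde P_{C_2}$ omitted by $U$, I would use the local parameter $t$ and treat all three fibre charts.

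The next step is to check the three axioms in these charts. For (L1), the base locus $Z(\Psi_1)\cap Z(\Psi_3-\Psi_4)$ of $\Phi^\ast J_1$ is cut out by explicit polynomials, and I would confirm its smoothness by the Jacobian criterion. For (L2) and (L3), I would treat $(a,b)$ as additional variables: letting $F_{(a,b)}$ denote the dehomogenized equation of $Z(a\Psi_1+b(\Psi_3-\Psi_4))$ in a given chart, form the ideal $I$ generated by $F_{(a,b)}$ and its partial derivatives in the three local coordinates (adjoining $y^2-w(x)$ where relevant). As $(a:b)$ and the local coordinates vary, the zero locus of $I$ parametrizes exactly the pairs $(\tilde q,s)$ with $\tilde q\in\Sing Z(s)$. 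A Gröbner basis computation should show that $I$ is zero-dimensional in every chart, which forces $\Sing Z(s)$ to be finite for \emph{every} $s\in J_1$ and presents these pairs as a finite set; eliminating the three local coordinates should produce a nonzero polynomial $\delta(a,b)$, so that the general member is smooth — this gives (L2) — while $\delta$ should have at least $42$ distinct roots in $\ps^1(\overline{\Q})$, which is (L3). Since smoothness and the number of geometric points of a variety are unaffected by enlarging the base field, it is enough to perform these computations over $\Q$ — for instance by checking that the relevant elimination ideals are radical, or by factoring $\delta$ — in order to conclude over $\C$. At this point $J_1$ is an L-pencil, so Proposition~\ref{Lprop} improves the count: the critical locus consists of exactly $42$ reduced points, one per singular member, each an ordinary double point.

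It then remains to verify that $\eta$ is defined at every point of $\bigcup_{s\in J_1}\Sing Z(s)$. By the reduction above, this amounts to checking, at each of the finitely many points $\tilde q$ already found, that the $4\times 5$ matrix with rows $\bigl(\Psi_j(\tilde q)\bigr)_j$ and $\bigl((\partial\Psi_j/\partial x_k)(\tilde q)\bigr)_j$ — the analogue of \eqref{sing_matrix} on $\tilde P$ — has rank $4$, equivalently that one of its $4\times 4$ minors is nonzero at $\tilde q$. This is a finite linear-algebra check over the residue fields of those points (each of degree at most $42$ over $\Q$), which I would again carry out in \textsc{Singular}.

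I expect the principal obstacle to be the size and termination of the Gröbner basis and elimination computations over $\Q(a,b)$: producing and certifying the degree-$42$ polynomial $\delta(a,b)$, verifying zero-dimensionality of $I$ in each of the several coordinate charts, and handling the $42$ critical points, which may form a single Galois orbit over a degree-$42$ number field. The elliptic curve $E_1$ and the sparse pencil $\langle\Psi_1,\Psi_3-\Psi_4\rangle$ appear to have been chosen precisely so that these computations are feasible; once that is granted, the remaining content is the bookkeeping described above — the chart and $G$-orbit decomposition, the fibre $\tilde P_{\tilde 0}$ over the point at infinity, and the Weierstrass relation — together with the reduction to $\tilde P$.
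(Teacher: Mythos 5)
Your proposal is correct and follows essentially the same route as the paper: reduce everything to the pullback pencil on the \'etale cover $\tilde P$, verify (L1)--(L3) by Gr\"obner-basis computations in the two charts $\tilde P|_U$ and $\tilde P|_{U'}$ (zero-dimensionality of the incidence ideal and a degree-$42$ polynomial cutting out the singular members), and then check nonvanishing of a $4\times 4$ minor of the analogue of \eqref{sing_matrix} at the finitely many critical points. The only divergence is an implementation detail: the paper certifies the minor condition numerically via high-precision approximations (in \textsc{Mathematica}) at the $126$ points upstairs, whereas you propose an exact check over the residue fields, which is equally valid.
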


\begin{proof}
To verify each of the properties (L1) -- (L3) for $J_1$, it suffices to instead work on the \'etale cover $\Phi:\tilde P\to\symEone$.  Specifically, it suffices to show that the corresponding pencil of pullbacks
\[
\tilde J_1 = \set{ a\Psi_1+b(\Psi_3-\Psi_4)\ | \ (a:b)\in\ps^1},
\]
on $\tilde P$ has smooth base locus, that it has at least 42 singular elements (but only finitely many), and that each of these singular elements has only isolated singularities.

One may verify this with the aid of the computer program \textsc{Singular} by working, separately for both open sets $\tilde P|_U$ and $\tilde P_{U'}$, with explicit rational polynomial rings and ideals coming from the equations in \S\ref{Ishida_subsec}.  First one checks that the ideal corresponding to the singularities of the base locus is empty.  Next one examines the subset
\begin{equation}\label{126cps}
\set{(\tilde q,\tilde s)\in \tilde P \times \tilde J_1 \ | \ \tilde q\in\Sing Z(\tilde s)},
\end{equation}
by showing that the associated quotient ring over open subsets have Krull dimension 0 and then by computing their dimensions as vector spaces over $\Q$.  One finds that the set (\ref{126cps}) consists of 126 points, and that its projection to $\tilde J_1\simeq \ps^1$ is parametrized by an irreducible (hence separable) polynomial of degree 42.  This allows one to conclude that there are exactly 42 singular elements in $\tilde J_1$, and (by $G$-symmetry) that the singular locus of each of these consists of three points.

From \textsc{Singular} one also obtains high-precision approximations to the complex coordinates of each point in the (\ref{126cps}).  Inputting these numerical coordinates into a program such as \textsc{Mathematica} allows one verify that $\eta$ is defined at each of the 126 points of $(\ref{126cps})$ (which of course amounts to checking that a matrix analogous to (\ref{sing_matrix}) has at least one nonzero minor).

Details of these computations may be found online \cite{LyData}.
\end{proof}

By combining Propositions \ref{more_Lprop} and \ref{computations}, we summarize the main outcome of \S\ref{degen_sec}:

\begin{thm}\label{sec3_thm}
Let $E_1$ be the elliptic curve
\[
E_1: \ y^2=x^3+x^2-59x-783/4,
\]
let $S=\ps H^0(\symEone,L)\simeq\ps^4$ parametrize the linear system $\Dsys$ on $\symEone$, and let $R\subseteq S$ be the reduced subscheme parametrizing singular elements.  Then
\begin{enumerate}
\item[(a)] $\dim R=3$.
\item[(b)] $R$ has exactly one $3$-dimensional irreducible component $\hat R$, which is a hypersurface of degree $42$ in $S$.
\item[(c)] For all $s$ in a Zariski-dense open subset of $\hat R$, $\Sing Z(s)$ consists of one ordinary double point.
\end{enumerate}
\end{thm}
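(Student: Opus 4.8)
The plan is to deduce all three assertions by specializing the general machinery of this section to the explicit elliptic curve $E_1$ and its distinguished pencil $J_1$. Part (a) needs no new input whatsoever: it is precisely Corollary \ref{dimR_cor}, which was proved for an arbitrary complex elliptic curve by combining the Euler-characteristic identity of Lemma \ref{euler_comp_lem} with the smoothing argument of Lemma \ref{smooth_axis_lemma}, and $E_1$ is one such curve. So the content is entirely in parts (b) and (c).

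For those, the first step is to invoke Proposition \ref{computations}, which records two facts about $E_1$: the pencil $J_1$ defined in (\ref{J1_defn}) is an L-pencil, and for every $q\in\Sing Z(s)$ with $s\in J_1$ the rational map $\eta:\symEone\dashrightarrow R$ is defined at $q$. These are exactly the two hypotheses needed to apply Proposition \ref{more_Lprop} with $J=J_1$. The second step is then purely formal. Conclusion (a) of Proposition \ref{more_Lprop} states that $\hat R$ is a hypersurface of degree $42$ in $S$, conclusion (b) that $\hat R$ is the only $3$-dimensional irreducible component of $R$, and conclusion (c) that $\Sing Z(s)$ is a single ordinary double point for $s$ in a Zariski-dense open subset of $\hat R$. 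Combining (a) and (b) of that proposition gives part (b) of the theorem, and (c) of that proposition gives part (c) verbatim.

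The only genuine obstacle lives inside Proposition \ref{computations}: the verification — carried out in \textsc{Singular} separately on the two coordinate charts $\tilde P|_U$ and $\tilde P|_{U'}$ of the \'etale cover, using the explicit ideals coming from the equations of \S\ref{Ishida_subsec} — that $\tilde J_1$ has smooth base locus, that the incidence set (\ref{126cps}) is zero-dimensional and consists of exactly $126$ points whose image in $\tilde J_1\simeq\ps^1$ is cut out by an irreducible polynomial of degree $42$ (so that there are exactly $42$ singular members, each with singular locus of three points by $G$-symmetry), and finally that $\eta$ is defined at each of those $126$ points, checked numerically in \textsc{Mathematica} by exhibiting a nonzero $4$-by-$4$ minor of the appropriate analogue of (\ref{sing_matrix}). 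Everything downstream of that, including the present theorem, is bookkeeping; no additional geometric argument is required.

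Thus the proof reduces to three citations: part (a) from Corollary \ref{dimR_cor}, and parts (b) and (c) by applying Proposition \ref{more_Lprop} to the L-pencil $J_1$ furnished by Proposition \ref{computations}.
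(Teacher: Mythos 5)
Your proposal is correct and matches the paper's own argument: the theorem is stated there precisely as the combination of Proposition \ref{more_Lprop} applied to the L-pencil $J_1$ furnished by Proposition \ref{computations}, with part (a) already covered by Corollary \ref{dimR_cor}. No gaps; the only substantive content is the computational verification inside Proposition \ref{computations}, exactly as you note.
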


\begin{remark}
The choice of the particular elliptic curve $E_1$ is computationally motivated.  Namely, the cover $\tilde E_1$ and its 3-torsion point $C_1$ are selected first for the simplicity of their coefficients, so that certain Gr\"obner basis computations are more efficient, and $E_1$ is simply the resulting quotient $\tilde E_1/\inn{C_1}$.
\end{remark}

% 4444444444444444444444444

\section{Big monodromy}\label{mon_sec}

\subsection{}\label{VHS_subsec}

Recall the smooth projective family $\pi:\mathcal X\to \mathcal S$ over $\Q$ constructed in \S\ref{prelim_sec} that contains all admissible \cc surfaces.  Via the embedding $\mathcal X\hookrightarrow \symbigE\times_Y \mathcal S$, the divisors $\mathcal D_0\times_Y\mathcal S$ and $\mathcal G_0\times_Y\mathcal S$ cut out divisors on $\mathcal X$ that we denote by $\lss{D}{\mathcal X}$ and $\lss{G}{\mathcal X}$.  For each $s\in\mathcal S(k)$, the divisors $\lss{D}{\mathcal X}$ and $\lss{G}{\mathcal X}$ in turn cut out divisors on the admissible \cc surface $\mathcal X_s$ that are numerically equivalent to the canonical divisor $K$ and an Albanese fiber $F$.  Define the local system $$\mathbb H_\Z:=R^2 (\pi_\C^{\an})_\ast \Z(1)/(\text{tors.})$$ on $\mathcal S_\C$, letting $\mathbb H:=\mathbb H_\Z\otimes\Q$.  The divisors $\lss{D}{\mathcal X}$, $\lss{G}{\mathcal X}$ yield respective classes $\xi_1$, $\xi_2$ in $H^0(\mathcal S_\C,\mathbb H_{\Z})$.

\begin{lem}\label{global_ind}
The global sections $\xi_1,\xi_2\in H^0(\mathcal S_\C,\mathbb H_{\Z})$ are linearly independent.
\end{lem}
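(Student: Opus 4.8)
The goal is to show that the two global sections $\xi_1,\xi_2 \in H^0(\mathcal S_\C,\mathbb H_\Z)$ coming from the divisors $\lss{D}{\mathcal X}$ and $\lss{G}{\mathcal X}$ are linearly independent. Since $H^0(\mathcal S_\C,\mathbb H_\Z)$ injects into the stalk $(\mathbb H_\Z)_s = H^2(\mathcal X_s,\Z)(1)/(\text{tors.})$ at any point $s\in\mathcal S(\C)$ (a global section is determined by its value at a point, by the connectedness of $\mathcal S_\C$), it suffices to exhibit a single fiber $\mathcal X_s$ in which the classes $c_1(K)=[\lss{D}{\mathcal X}|_{\mathcal X_s}]$ and $c_1(F)=[\lss{G}{\mathcal X}|_{\mathcal X_s}]$ are linearly independent in $H^2(\mathcal X_s,\Q)$. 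But by Corollary \ref{CCfamily} the fiber $\mathcal X_s$ is an admissible \cc surface with canonical class numerically equivalent to $K$ and an Albanese fiber numerically equivalent to $F$, and the excerpt already records (just before the statement of Theorem A) that $K$ and $F$ are numerically independent curves on any \cc surface. So the plan is simply to invoke this numerical independence fiberwise.

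In more detail, I would argue as follows. First I would recall that the intersection form on $H^2(\mathcal X_s,\Q)$ pairs these classes as $K^2 = 3$, $K\cdot F = \deg K|_F$, and $F^2 = 0$ (since $F$ is a fiber of the Albanese fibration, distinct fibers are disjoint). A quick computation of $K\cdot F$: the Albanese fiber has genus $g=3$, and by the adjunction formula on the surface, $2g-2 = F\cdot(F+K) = F^2 + F\cdot K = F\cdot K$, so $K\cdot F = 4$. Thus the Gram matrix of $(K,F)$ with respect to the cup product is $\left(\begin{smallmatrix} 3 & 4 \\ 4 & 0\end{smallmatrix}\right)$, which has determinant $-16\neq 0$; hence $K$ and $F$ are linearly independent in $H^2(\mathcal X_s,\Q)$, and a fortiori $\xi_1,\xi_2$ are linearly independent in $H^0(\mathcal S_\C,\mathbb H)$, hence in $H^0(\mathcal S_\C,\mathbb H_\Z)$.

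The one point requiring a word of care is the passage from "linearly independent in a single fiber" to "linearly independent as global sections": this uses that $\mathcal S_\C$ is connected (established in \S\ref{prelim_sec}) so that the restriction map $H^0(\mathcal S_\C,\mathbb H_\Z)\to (\mathbb H_\Z)_s$ is injective, a global section of a local system on a connected base being determined by its stalk at one point. Everything else is a routine intersection-number bookkeeping, already essentially contained in Theorem \ref{classification} and the invariants $K^2=3$, $g=3$ defining a \cc surface. I do not expect any genuine obstacle here; the lemma is a sanity check that the orthogonal complement $\mathbb V$ used to define the variation of Hodge structure is the expected $9$-dimensional object (recall $\mathbb H$ has rank $b_2 = 11$ for an admissible \cc surface, by Noether's formula $e=12\chi - K^2 = 12\cdot 2 - 3$ hmm this gives $e = 21$? — in any case one does not need $b_2$ for this lemma, only the $2\times 2$ Gram determinant), so the proof is short.

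Hmm, wait — let me reconsider whether there's a subtlety I'm missing about $\mathbb H_\Z$ being the quotient by torsion, which could in principle affect "linear independence": but linear independence is a statement after tensoring with $\Q$, so torsion is irrelevant, and the injectivity $H^0(\mathcal S_\C,\mathbb H_\Z)\hookrightarrow H^0(\mathcal S_\C,\mathbb H)\hookrightarrow (\mathbb H)_s$ makes the reduction to a single fiber clean. So the proof proposal is: reduce to one fiber by connectedness of $\mathcal S_\C$; on that fiber compute the Gram matrix $\left(\begin{smallmatrix} K^2 & K\cdot F \\ K\cdot F & F^2 \end{smallmatrix}\right) = \left(\begin{smallmatrix} 3 & 4 \\ 4 & 0\end{smallmatrix}\right)$ using $K^2=3$ (defining invariant), $F^2=0$ (fibers are disjoint), and $K\cdot F = 2g-2 = 4$ (adjunction, $g=3$); conclude the determinant $-16$ is nonzero.
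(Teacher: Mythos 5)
Your proposal is correct and is essentially the paper's own argument: the paper likewise reduces to a single fiber and notes that the canonical class and Albanese fiber class are independent "by the adjunction formula," which is exactly your Gram-matrix computation $K^2=3$, $F^2=0$, $K\cdot F=2g-2=4$ made explicit. (The parenthetical aside about Noether's formula is off only because $\chi(\mathcal O_X)=1-q+p_g=1$, giving $e=9$ and $b_2=11$, but as you say this is irrelevant to the lemma.)
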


\begin{proof}
It suffices to show the restrictions of these sections to the fiber $H^2(\mathcal X_\sigma,\Z)(1)/(\text{tors.})$ of $\mathbb H_{\Z}$ at a point $\sigma\in\mathcal S(\C)$ are independent.  But these restrictions are just the cycle classes of an Albanese fiber and the canonical divisor on $\mathcal X_s$, which are independent by the adjunction formula.
\end{proof}

Let
\[
\phi: \mathbb H_\Z\otimes\mathbb H_\Z\to R^4(\pi_\C^{\an})_\ast \Z(2)\simeq \Z
\]
be the cup product form.  Let $\mathbb V_\Z$ be the orthogonal complement under $\phi$ of the rank 2 local subsystem of $\mathbb H_\Z$ generated by $\set{\xi_1,\xi_2}$ and let $\mathbb V:=\mathbb V_\Z\otimes\Q$.  Then $\mathbb V_\Z$ naturally gives rise to an integral variation of Hodge structure, for which we use the same notation, of weight zero and rank 9 with Hodge numbers $h^{1,-1}=h^{-1,1}=1, h^{0,0}=7$.  Moreover, the global section $\xi_1$ of $\mathbb H_\Z$ restricts to the class of the canonical divisor in each fiber $\mathbb H_{\Z,\sigma}=H^2(\mathcal X_\sigma,\Z)(1)/(\text{tors.})$, which is ample since each $\mathcal X_s$ is an admissible \cc surface.  Thus the cup product form $\phi:\mathbb V_\Z\otimes\mathbb V_\Z\to \Z$ makes $\mathbb V_\Z$ into a polarized integral variation of Hodge structure on $\mathcal S_\C$.

Recalling the construction of $\pi:\mathcal X\to \mathcal S$, suppose the point $y\in Y(\C)$ on the modular curve $Y$ corresponds to the elliptic curve $E_1$ from Theorem \ref{sec3_thm}.  Then $\mathcal S_{0,y}\simeq S=\ps H^0(\symEone,L)$.  Under such an identification, let $J\subseteq \mathcal S_{0,y}$ be a general line representing a pencil in $|\mathfrak D|$, and let $J^\ast\subseteq J$ represent the locus of smooth divisors.  Hence the pullback of $\mathcal X\to\mathcal S$ to $J^\ast$ is the restriction of the total space $\mathcal Y\to J$ of the pencil to the smooth fibers.  In order prove Theorem A regarding the monodromy of $\mathbb V$ over $\mathcal S_C$, it will suffice to prove the following:

\begin{thm}\label{supp_thm}For $\sigma\in J^\ast$, the Zariski-closure of the image of the monodromy representation
\begin{equation}\label{rhoL}
\lambda: \pi_1(J^\ast(\C),\sigma)\to \bigO(\mathbb V_{\C,\sigma},\phi_\sigma)
\end{equation}
is the complex algebraic group $\bigO(\mathbb V_{\C,\sigma},\phi_\sigma)$.
\end{thm}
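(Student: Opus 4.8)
The plan is to realise $\mathbb V_\C$ over $J^\ast$ as the vanishing cohomology of a Lefschetz-type fibration and then run a Picard--Lefschetz monodromy argument, fed by the analysis of singular members of $\Dsys$ carried out in \S\ref{degen_sec}. By Theorem \ref{sec3_thm} (together with Lemma \ref{smooth_axis_lemma} and Proposition \ref{Lprop}), the general pencil $J\subseteq S=\ps H^0(\symEone,L)$ is an L-pencil with smooth base locus $\Gamma$, so that $\mathcal Y=\Bl_\Gamma(\symEone)$, the total space of $\rho:\mathcal Y\to J$, is smooth and projective, $\rho$ has exactly $42$ critical values each carrying a fibre with a single ordinary double point, and these $42$ singular members are general points of the unique $3$-dimensional irreducible component $\hat R$ of $R$. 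For $\sigma\in J^\ast$ set $\mathcal Y_\sigma:=\rho^{-1}(\sigma)$, a smooth \cc surface. First I would apply the Picard--Lefschetz formula at each of the $42$ nodes: it shows that $M:=\lambda\bigl(\pi_1(J^\ast,\sigma)\bigr)$ is generated by the reflections $s_{\delta_1},\dots,s_{\delta_{42}}$ in the vanishing cycles $\delta_i\in H^2(\mathcal Y_\sigma,\Q)(1)$, each of self-intersection $-2$, with $s_{\delta_i}(x)=x+(x\cdot\delta_i)\,\delta_i$. Since any class pulled back along the blow-down from $H^2(\mathcal Y,\Q)(1)$ restricts to a monodromy-invariant class on $\mathcal Y_\sigma$, it is fixed by each $s_{\delta_i}$ and hence orthogonal to every $\delta_i$; as $\xi_1,\xi_2$ are such classes, $\delta_i\in\langle\xi_1,\xi_2\rangle^\perp=\mathbb V_\sigma$, confirming $M\subseteq\bigO(\mathbb V_{\C,\sigma},\phi_\sigma)$ and giving $E:=\Span_\Q(\delta_1,\dots,\delta_{42})\subseteq\mathbb V_\sigma$.

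Everything then rests on two claims: \emph{(a) $E=\mathbb V_\sigma$}, and \emph{(b) the $\delta_i$ form a single $M$-orbit up to sign}. For (a): by the Picard--Lefschetz formula a class in $H^2(\mathcal Y_\sigma,\Q)(1)$ is $M$-invariant precisely when it is orthogonal to all $\delta_i$, so $E^\perp$ (taken inside $H^2(\mathcal Y_\sigma,\Q)(1)$) is the $M$-invariant subspace. By Deligne's global invariant cycle theorem, applied to the smooth projective variety $\mathcal Y$ and the smooth restriction of $\rho$ over $J^\ast$, this invariant subspace equals the image of the restriction $H^2(\mathcal Y,\Q)(1)\to H^2(\mathcal Y_\sigma,\Q)(1)$. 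As $\symEone$ is a $\ps^2$-bundle over $E_1$ with tautological class $[D_0]$ and fibre class $[G_0]$, and $\mathcal Y$ is its blow-up along the curve $\Gamma$, the space $H^2(\mathcal Y,\Q)$ is spanned by the pullbacks of $[D_0]$ and $[G_0]$ together with the class of the exceptional divisor, and the restrictions of these three to $\mathcal Y_\sigma$ are the classes of $K$, of $F$, and of $\Gamma\equiv 4D_0-G_0$ (Theorem \ref{classification} and Lemma \ref{euler_comp_lem}), all lying in $\langle\xi_1,\xi_2\rangle$. Hence $E^\perp=\langle\xi_1,\xi_2\rangle$; since cup product is nondegenerate on $H^2(\mathcal Y_\sigma,\Q)$, applying orthogonal complement again gives $E=\langle\xi_1,\xi_2\rangle^\perp=\mathbb V_\sigma$. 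For (b): the $42$ singular members of $J$ are its transverse intersections with the single irreducible hypersurface $\hat R$, whose smooth locus is connected, so the standard transitivity argument for Lefschetz pencils shows that $\pi_1(J^\ast,\sigma)$ permutes $\set{\pm\delta_1,\dots,\pm\delta_{42}}$ transitively.

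Granting (a) and (b), one finishes with the usual reflection-group argument. First, $M$ acts irreducibly on $\mathbb V_{\C,\sigma}$: for a nonzero invariant subspace $W$ and each $i$, either $\delta_i\in W$ (when $\phi_\sigma(w,\delta_i)\neq0$ for some $w\in W$) or $W\perp\delta_i$; by (b) the first alternative for a single $i$ forces $W=\mathbb V_{\C,\sigma}$, while the second for all $i$ gives $W\subseteq E^\perp\cap\mathbb V_{\C,\sigma}=0$ by (a) and nondegeneracy of $\phi_\sigma$. Second, $M$ is infinite: otherwise, being a finite group acting irreducibly, it would preserve an essentially unique symmetric bilinear form (Schur's lemma), so $\phi_\sigma$ would be proportional to the averaged positive-definite form and hence definite, contradicting its signature $(2,7)$. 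Finally, an infinite subgroup of an orthogonal group that acts irreducibly and is generated by reflections in a single orbit of spanning vectors has Zariski closure containing the corresponding special orthogonal group: its identity component is a nontrivial connected normal subgroup, so $\Lie(\overline M)$ is a nonzero $M$-submodule of $\mathfrak{so}(\mathbb V_{\C,\sigma})$, which irreducibility together with the presence of the reflections forces to be all of $\mathfrak{so}(\mathbb V_{\C,\sigma})$; and since $M$ contains the reflections $s_{\delta_i}$, of determinant $-1$, we conclude $\overline M=\bigO(\mathbb V_{\C,\sigma},\phi_\sigma)$.

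The step I expect to be the main obstacle is claim (a): pinning down the vanishing cohomology as $\mathbb V_\sigma$ exactly, rather than as a proper subspace. This is precisely where the failure of $\mathfrak D$ to be very ample must be handled — classical Lefschetz theory does not apply verbatim — so one must both check that blowing up the base locus $\Gamma$ introduces no invariant classes on $\mathcal Y_\sigma$ beyond those coming from $K$, $F$ and $\Gamma$, and invoke the global invariant cycle theorem for the fibration $\rho:\mathcal Y\to J$, whose total space remains smooth projective despite its $42$ singular fibres. Claim (b) is a genuine input as well, but it follows immediately from the irreducibility of $\hat R$ already proved in Theorem \ref{sec3_thm}, and the concluding group-theoretic step is by now standard.
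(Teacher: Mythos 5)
Your proposal follows the same architecture as the paper's argument: reduce to the general pencil $J$ on $\symEone$, use Theorem \ref{sec3_thm} (with Lemma \ref{smooth_axis_lemma} and Proposition \ref{Lprop}) to get $42$ nodal members, identify $\mathbb V_\sigma$ with the span of the vanishing cycles, get the single-orbit and irreducibility statements from the irreducibility of $\hat R$, and kill the finite case with the signature $(2,7)$ argument. Your proof of claim (a) is a correct minor variant: you use the global invariant cycle theorem and a double orthogonal complement, where the paper uses the theorem of the fixed part together with the splitting $H^2(\mathcal Y_\sigma,\Q)(1)=I\oplus V$ coming from (a Hard Lefschetz consequence of) the ampleness of $\mathfrak D$; both work. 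In claim (b) you elide one point the paper treats explicitly: the monodromy factors through $\pi_1(S(\C)\setminus R(\C),\sigma)$, and one needs $\pi_1(S(\C)\setminus R(\C),\sigma)\simeq\pi_1(S(\C)\setminus \hat R(\C),\sigma)$ (because $R\setminus\hat R$ has codimension at least two) before the classical conjugacy-of-elementary-loops argument for the irreducible hypersurface $\hat R$ applies; this is minor.

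The genuine gap is in your final density step. You assert that since $M$ is infinite, $\Lie(\overline M)$ is a nonzero $\mathrm{Ad}(M)$-stable subspace of $\mathfrak{so}(\mathbb V_{\C,\sigma})$, and that ``irreducibility together with the presence of the reflections'' forces $\Lie(\overline M)=\mathfrak{so}(\mathbb V_{\C,\sigma})$. That deduction is not valid as stated: irreducibility of $M$ on $\mathbb V_{\C,\sigma}$ does not make $\mathfrak{so}(\mathbb V_{\C,\sigma})\simeq\wedge^2\mathbb V_{\C,\sigma}$ an irreducible $M$-module, so a nonzero stable subspace need not be everything (think of $\overline M^0$ a principal $\SO_3$ in its $9$-dimensional orthogonal representation, or the normalizer of a torus: in such configurations $\Lie(\overline M)$ is a proper nonzero $\mathrm{Ad}$-stable subspace), and because the Picard--Lefschetz reflections have order two you cannot extract Lie-algebra elements from them as one does with unipotent transvections in the symplectic case. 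Ruling out exactly these intermediate closures — i.e., showing that an infinite group generated by reflections in a single spanning orbit of $(-2)$-classes is Zariski-dense in the full orthogonal group — is the nontrivial content of Deligne's Lemma $4.4.2^{s}$ in \cite{Del2}, which is what the paper invokes (Corollary \ref{f_or_f_cor}) to get the dichotomy ``finite or Zariski-dense'' before the signature argument. So either cite that lemma at this point, or supply an actual classification-type argument (first showing $\overline M^0$ acts irreducibly, then analyzing connected irreducible subgroups of the orthogonal group normalized by a reflection in a vanishing cycle); as written, the density step is unproved.
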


In summary, we are reduced to investigating the monodromy of a general pencil of divisors in the complete linear system $\Dsys$ on $\symEone$.

%%%%%%%%%%%%%%%%%%%%%%%%%%%

\subsection{}\label{Thm_A_subsec}

The proof of Theorem \ref{supp_thm} will follow along the lines of the classical theory of Lefschetz, which concerns the setting of Lefschetz pencils of very ample divisors on a smooth projective variety.  Our primary reference for Lefschetz theory will be the exposition \cite{Lamot}.  One may also consult \cite[Expos\'es XVII, XVIII]{SGA7}.

The divisor $\mathfrak D$ on $\symEone$ is ample (by \cite[Prop.\ 1.14]{CC}) but not very ample (e.g., by Proposition \ref{bp}), so Lefschetz theory does not apply directly to our case.  However, with Theorem \ref{sec3_thm} at our disposal, the techniques in \cite{Lamot} can be extended to the present case.  Since Lefschetz theory is well-known and  the extension is straightforward, we only highlight the most salient points in the argument.

As mentioned, $J^\ast$ is the smooth locus of a general pencil $J$ in the linear system $\Dsys$ on $\symEone$.  By Lemma \ref{smooth_axis_lemma} and Theorem \ref{sec3_thm}, $J$ has the following properties:
\begin{enumerate}
\item The base locus of $J$ is smooth.
\item If $Z(s)$ is singular for some $s\in J$, then $\Sing Z(s)$ consists of one ordinary double point.
\item $J$ intersects the singular locus $R\subseteq S$ only at smooth points of $\hat R$, and does so transversally.
\end{enumerate}
Fix a base point $\sigma\in J^\ast(\C)$, so that $Z(\sigma)$ is smooth.  Then (1) and (2) allow the application of Picard-Lefschetz theory to the fibration $\mathcal Y\to J$.  In particular, this theory introduces a collection of vanishing cycles in $H_2(\mathcal Y_\sigma,\Q)(-1)$ (of which there are 42 by Theorem \ref{sec3_thm}).  By abuse of terminology, we will also use the term \emph{vanishing cycles} to refer to the elements in $H^2(\mathcal Y_\sigma,\Q)(1)$ that are the Poincar\'e duals of the (homological) vanishing cycles; since our primary viewpoint will be cohomological, this should not result in confusion.

\begin{lem}
Under the isomorphism $\mathbb H_\sigma \simeq H^2(\mathcal Y_\sigma,\Q)(1)$, the subspace $\mathbb V_\sigma$ of $\mathbb{H}_\sigma$ is identified as a $\pi_1(J^\ast(\C),\sigma)$-module with the subspace of $H^2(\mathcal Y_\sigma,\Q)(1)$ generated by the vanishing cycles of the fibration $\mathcal Y\to J$.  In particular, $\pi_1(J^\ast(\C),\sigma)$ does not fix any nonzero vector in $\mathbb V_\sigma$.
\end{lem}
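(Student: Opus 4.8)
The claim has two parts: first, an identification of $\mathbb{V}_\sigma$ with the span of the vanishing cycles inside $H^2(\mathcal{Y}_\sigma,\Q)(1)$; second, the consequence that this span contains no nonzero $\pi_1(J^\ast,\sigma)$-invariant. For the second part, one invokes the standard Picard--Lefschetz fact: the monodromy group acts transitively on the set of vanishing cycles up to sign (since the discriminant locus $R$ meets $J$ in a single orbit under $\pi_1(J^\ast)$, $J$ being a general pencil, which holds because $\hat R$ is irreducible by Theorem \ref{sec3_thm}(b)), and each simple reflection $x\mapsto x\pm\langle x,\delta_i\rangle\delta_i$ attached to a vanishing cycle $\delta_i$ moves any vector not orthogonal to $\delta_i$. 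Hence a vector fixed by all of $\pi_1(J^\ast,\sigma)$ would be orthogonal to every $\delta_i$, hence orthogonal to the whole span of the $\delta_i$'s, hence (being in that span) zero — provided the cup-product form is nondegenerate on the vanishing subspace, which follows once we know that subspace equals $\mathbb{V}_\sigma$, on which $\phi_\sigma$ is a polarization and so nondegenerate. So the second sentence is purely formal once the first is established.

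**The identification.** The real content is matching $\mathbb{V}_\sigma$ with the vanishing cohomology. Here I would proceed exactly as in the classical Lefschetz setup (following \cite{Lamot}), with $\symEone$ playing the role of the ambient projective variety and $\mathfrak{D}$ the (non-very-ample) divisor class, using Theorem \ref{sec3_thm} to supply the hypotheses that very-ampleness would ordinarily guarantee. First, the blow-up $\mathcal{Y}=\Bl_\Gamma(\symEone)$ fibers over $J$ with $\mathcal{Y}_\sigma\cong Z(\sigma)$; the fiber inclusion $\mathcal{Y}_\sigma\hookrightarrow\mathcal{Y}$ and the weak Lefschetz-type statement give that $H^2(\mathcal{Y}_\sigma,\Q)(1)$ is generated by the image of $H^2(\mathcal{Y},\Q)(1)$ together with the vanishing cycles, and that the subspace of invariant (i.e., coming from the total space) classes is the orthogonal complement of the vanishing subspace. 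One then computes the invariant part: restriction from $\symEone$ gives the classes of $D_0|_{Z(\sigma)}$ and $G_0|_{Z(\sigma)}$ — numerically $K$ and $F$ — plus the class of the exceptional divisor, and a rank count using $h^0(\symEone,L)=5$, $\dim R=3$, and the Picard--Lefschetz relation $\sum\mu_i=42$ (Lemma \ref{euler_comp_lem}) pins down the rank of the vanishing subspace as $9$. Since $\mathbb{H}_\sigma$ has rank $11$ and $\mathbb{V}_\sigma$ was defined as the rank-$9$ orthogonal complement of $\langle\xi_1,\xi_2\rangle=\langle K,F\rangle$, and the invariant classes in $H^2(\mathcal{Y}_\sigma)(1)$ that descend to $\symEone$ are exactly spanned by $K$ and $F$ (the exceptional class being numerically a combination of pullbacks of $D_0,G_0$ restricted to the fiber), the vanishing subspace must coincide with $\mathbb{V}_\sigma$. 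The identification is moreover $\pi_1(J^\ast,\sigma)$-equivariant because both sides are defined intrinsically inside the local system $\mathbb{H}$ restricted to $J^\ast$.

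**Main obstacle.** The delicate point is not the monodromy argument but verifying that the weak-Lefschetz machinery of \cite{Lamot} genuinely applies when $\mathfrak{D}$ is only ample, not very ample. The usual proofs embed the ambient variety by $|\mathfrak{D}|$ and take hyperplane sections; here instead one works directly with the (base-point-bearing) linear system $\Dsys$ and the pencil $\mathcal{Y}\to J$. What saves the argument is precisely Lemma \ref{smooth_axis_lemma} (smooth base locus, so $\mathcal{Y}$ is smooth and the pencil behaves like a Lefschetz pencil) together with Theorem \ref{sec3_thm} (general singular fibers have a single ODP, and the discriminant component $\hat R$ is irreducible of degree $42$, giving exactly $42$ vanishing cycles forming one monodromy orbit). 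Granting these, the cohomological input — vanishing cycles generate the "extra" part of $H^2$ of the fiber, the invariant part is the part restricted from the total space, and Picard--Lefschetz reflections act in the standard way — goes through verbatim, which is why the paper can afford to "only highlight the most salient points." I would therefore structure the proof as: (i) cite that $\mathcal{Y}\to J$ is a Lefschetz-type pencil by Lemma \ref{smooth_axis_lemma} and Theorem \ref{sec3_thm}; (ii) apply the vanishing-cohomology dichotomy of \cite{Lamot} to split $H^2(\mathcal{Y}_\sigma,\Q)(1)$; (iii) identify the invariant summand with $\langle K,F\rangle^{\perp\perp}$-complement via the rank count, concluding the vanishing summand is $\mathbb{V}_\sigma$; (iv) deduce no nonzero invariant vector from transitivity of the monodromy on vanishing cycles plus nondegeneracy of $\phi_\sigma$ on $\mathbb{V}_\sigma$.
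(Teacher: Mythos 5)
Your proposal follows essentially the same route as the paper's proof: identify the $\pi_1(J^\ast(\C),\sigma)$-invariant part of $H^2(\mathcal Y_\sigma,\Q)(1)$ with the image of $H^2(\mathcal Y,\Q)(1)$ (Deligne's theorem of the fixed part), use the blow-up decomposition $H^2(\mathcal Y,\Q)(1)\simeq H^2(\symEone,\Q)(1)\oplus\Q(1)$ to see that this image is spanned by $K$ and $F$, and then invoke the Lefschetz-pencil machinery for the merely ample class $\mathfrak D$ (Hard Lefschetz for $\symEone$ plus Picard--Lefschetz, with Theorem \ref{sec3_thm} and Lemma \ref{smooth_axis_lemma} supplying the hypotheses) to conclude that the span of the vanishing cycles is exactly the orthogonal complement, i.e.\ $\mathbb V_\sigma$. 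Two minor remarks: your ``rank count'' from $h^0(\symEone,L)=5$, $\dim R=3$ and $\sum_i\mu_i=42$ is not what pins the vanishing subspace down --- what does is $b_2(\symEone)=2$ together with the exceptional class restricting into the span of $K$ and $F$, which you also state --- and the transitivity of monodromy on vanishing cycles (proved only afterwards, in Proposition \ref{single_orbit}) is not needed for the final claim, since orthogonality to every vanishing cycle plus nondegeneracy of $\phi_\sigma$ on $\mathbb V_\sigma$ (or, as in the paper, the decomposition $H^2(\mathcal Y_\sigma,\Q)(1)=I\oplus V$ with $I\cap V=0$) already suffices.
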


\begin{proof}
The isomorphism of $\pi_1(J^\ast(\C),\sigma)$-modules
\[
\mathbb H_\sigma \simeq H^2(\mathcal Y_\sigma,\Q)(1)
\]
identifies the spans of the canonical and Albanese classes on each side, and thus identifies their orthogonal complements.  In $\mathbb H_\sigma$ this complement is $\mathbb V_\sigma$.  Thus it will suffice to show that the complement in $H^2(\mathcal Y_\sigma,\Q)(1)$ is the span of the vanishing cycles.

Let $I\subseteq H^2(\mathcal Y_\sigma,\Q)(1)$ denote the space of $\pi_1(J^\ast(\C),\sigma)$-invariants.  Then the subspace spanned by the canonical and Albanese classes in $H^2(\mathcal Y_\sigma,\Q)(1)$ is contained in $I$, and we claim this inclusion is an equality.  Using Deligne's theorem of the fixed part, $I$ is exactly the image of $H^2(\mathcal Y,\Q)(1)$ induced by $\mathcal Y_\sigma\hookrightarrow \mathcal Y$.  Since $\mathcal Y$ is the blow-up of $\symEone$ along the smooth base locus, one has
\[
H^2(\mathcal Y,\Q)(1)\simeq H^2(\symEone,\Q)(1)\oplus \Q(1),
\]
where the summand $\Q(1)$ is generated by the class of the exceptional divisor.  We know that the summand $H^2(\symEone,\Q)(1)$ contributes the canonical and Albanese classes to $H^2(\mathcal Y_\sigma,\Q)(1)$.  Furthermore, the class of the exceptional divisor restricts on $\mathcal Y_\sigma$ to that of the base locus curve, which is generated by the canonical and Albanese classes.  This shows the claim.

Next, using the fact that $\mathfrak D$ is ample (and therefore an analogue of Hard Lefschetz holds for $\symEone$ using the class of $\mathfrak D$), it follows that
\begin{equation}\label{PL_direct_sum}
H^2(\mathcal Y_\sigma,\Q)(1) = I\oplus V,
\end{equation}
where $V$ is the kernel of the Gysin map $H^2(\mathcal Y_\sigma,\Q)(1) \to H^2(\symEone,\Q)(1)$ (see \cite[p.412]{PS}).  But by Picard-Lefschetz theory, $V$ is the subspace generated by the vanishing cycles of the fibration $\mathcal Y\to J$.  Since $V\subseteq I^{\perp}$ by the Picard-Lefschetz formula, (\ref{PL_direct_sum}) gives $V=I^\perp$.  In particular, no nonzero vector in $V$ is fixed since $I\cap V=0$.
\end{proof}

\begin{prop}\label{single_orbit}
Concerning the action of $\pi_1(J^\ast(\C),\sigma)$ on $\mathbb V_{\sigma}$, we have:
\begin{enumerate}
\item[(a)] Let $\delta\in \mathbb V_\sigma$ be a vanishing cycle; then for any other vanishing cycle $\delta' \in \mathbb V_\sigma$, either $\delta'$ or $-\delta'$ is contained in the orbit of $\delta$ under $\pi_1(J^\ast(\C),\sigma)$.
\item[(b)] The action of $\pi_1(J^\ast(\C),\sigma)$ on the complexification $(\mathbb V_{\sigma})_\C$ is irreducible.
\end{enumerate}
\end{prop}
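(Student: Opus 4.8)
The plan is to deduce part (a) from the classical fact that the monodromy of a Lefschetz pencil acts transitively, up to sign, on the set of vanishing cycles once the relevant discriminant is irreducible --- here that discriminant is the hypersurface $\hat R\subseteq S$ of Theorem \ref{sec3_thm} --- and then to obtain part (b) formally from (a), from the nondegeneracy of the polarization $\phi$ on $\mathbb V_\sigma$, and from the preceding lemma, which identifies $\mathbb V_\sigma$ with the span of the vanishing cycles of $\mathcal Y\to J$.

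For (a), I would fix a point $s_0\in S\setminus R$ and identify $\mathbb P^3$ with the set of pencils in $S$ through $s_0$, as in the proof of Proposition \ref{more_Lprop}(c), together with its finite map $\ell\colon\hat R\to\mathbb P^3$. By Lemma \ref{smooth_axis_lemma} and Theorem \ref{sec3_thm}, the L-pencils form a Zariski-dense open subset $U\subseteq\mathbb P^3$. Over $U$ the incidence variety $\{(J,\sigma')\ :\ J\in U,\ \sigma'\in J\cap\hat R\}$ is, via $(J,\sigma')\mapsto\sigma'$, isomorphic to the dense open subset $\ell^{-1}(U)$ of the irreducible variety $\hat R$, hence connected; and it is finite over $U$ of degree $\deg\hat R=42$, the fibre over an L-pencil $J$ being the set of its $42$ critical values (Proposition \ref{Lprop}). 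Consequently $\pi_1(U)$ permutes these $42$ critical values transitively. Feeding this into the universal family of smooth fibres over the total space of the pencils parametrized by $U$, exactly as in \cite{Lamot} (see also \cite[Exp.\ XVIII]{SGA7}), one concludes that already $\pi_1(J^\ast(\C),\sigma)$ sends a given vanishing cycle to $\pm$ each of the others. The one point I expect to require attention is that \cite{Lamot} and \cite{SGA7} are written for Lefschetz pencils of very ample divisors; but the three properties of $J$ recorded at the start of \S\ref{Thm_A_subsec} --- smooth base locus, a single ordinary double point on each singular member, and transversal meeting with $\hat R$ at smooth points --- are precisely the hypotheses those arguments use, so the transcription will be routine. (The substantive work, namely the irreducibility of the discriminant and the genericity of ordinary double points, has already been carried out in \S\ref{degen_sec}.)

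For (b), let $W\subseteq(\mathbb V_\sigma)_\C$ be a nonzero $\pi_1(J^\ast(\C),\sigma)$-stable subspace. For each vanishing cycle $\delta$, the Picard--Lefschetz formula shows that the corresponding simple loop acts by $w\mapsto w\pm\langle w,\delta\rangle\,\delta$, where $\langle\ ,\ \rangle$ denotes the form $\phi$; hence either $\langle W,\delta\rangle=0$ or $\delta\in W$. If $\delta\notin W$ for every vanishing cycle, then $W$ is orthogonal to all of them, hence to their span; by the preceding lemma that span equals $(\mathbb V_\sigma)_\C$, on which $\phi$ is nondegenerate, forcing $W=0$, contrary to assumption. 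Hence $W$ contains some vanishing cycle, and then by (a) it contains $\pm\delta$, and so $\delta$, for every vanishing cycle $\delta$; thus $W$ contains their $\C$-span, which is all of $(\mathbb V_\sigma)_\C$. Therefore $W=(\mathbb V_\sigma)_\C$, which proves the irreducibility asserted in (b). The genuine obstacle in the proposition is thus entirely in (a), and it has essentially been removed in advance by Theorem \ref{sec3_thm}.
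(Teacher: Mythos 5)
Your part (b) is exactly the paper's argument (the standard one from \cite[p.46]{Lamot}), and your instinct to reduce everything to the irreducibility of $\hat R$ from Theorem \ref{sec3_thm} matches the paper's strategy. But in part (a) there is a genuine gap at the step ``feeding this into the universal family \ldots exactly as in \cite{Lamot}.'' Your incidence-variety argument correctly shows that $\pi_1(U)$ permutes the $42$ critical values of $J$ transitively; however, transporting vanishing cycles along such loops only shows that $\delta$ and $\pm\delta'$ lie in a single orbit under the monodromy of loops in the total space of the family of pencils --- equivalently, under (the image of) $\pi_1(S(\C)\setminus R(\C),\sigma)$ --- not under $\pi_1(J^\ast(\C),\sigma)$, which is what the proposition asserts. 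To descend, you still need a Zariski--Lefschetz type surjectivity statement, namely that $\pi_1(J^\ast(\C),\sigma)\to\pi_1(S(\C)\setminus R(\C),\sigma)$ is onto (this is \cite[\S 7.5]{Lamot} in the very ample setting), together with the observation that $\lambda$ factors through the latter group. Your transitivity lemma does not substitute for that surjectivity; it only repackages the conjugacy input of \cite[\S 7.4]{Lamot}, so the heavy lifting you defer to the references includes a step your lemma cannot replace.

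Moreover, the ``routine transcription'' from the very ample case has exactly one non-routine point, which you do not address: the discriminant here is $R$, and only its divisorial part $\hat R$ is known to be irreducible, while $R$ may have extra components of codimension at least two; the monodromy of $\mathbb V$ naturally factors through $\pi_1(S(\C)\setminus R(\C),\sigma)$, whereas the classical statements (conjugacy of elementary loops, surjectivity from the pencil) are proved for the complement of the irreducible hypersurface $\hat R$. The paper bridges this by noting that the inclusion induces an isomorphism $\pi_1(S(\C)\setminus R(\C),\sigma)\simeq\pi_1(S(\C)\setminus\hat R(\C),\sigma)$ because $R\setminus\hat R$ has complex codimension at least two (citing \cite[Prop.\ 4.1.1]{Dimca}), after which \cite[\S\S 7.4--7.6]{Lamot}, the Picard--Lefschetz formula, and the nondegeneracy of $\phi_\sigma$ finish (a). With these two points supplied --- the surjectivity statement and the $R$ versus $\hat R$ comparison --- your argument closes; as written, it is incomplete at precisely the steps the paper takes care over.
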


\begin{proof}
Part (a) may be reduced to verifying two topological facts:
\begin{itemize}
\item[(i)] Viewing $J$ as a line in $S$, so that
\[
J^\ast = J\setminus R \subseteq S \setminus R,
\]
the induced map $\pi_1(J^\ast(\C),\sigma)\to \pi_1(S(\C)\setminus R(\C),\sigma)$ is surjective.
\item[(ii)] Choose any two points $p_0,p_1\in J\cap R=J\cap \hat R$ and let $\gamma_i$ be an elementary path corresponding to $p_i$, for $i=0,1$.  (That is, $\gamma_i$ starts at $\sigma$, travels a path in $J^\ast$ to a point near $p_i$, circles once around $p_i$ in the positive sense, and returns to $\sigma$ along its initial path; see \cite[Fig. 3]{Lamot}.)  Then the homotopy classes $[\gamma_0],[\gamma_1]\in \pi_1(S(\C)\setminus R(\C),\sigma)$ are conjugate.
\end{itemize}

The analogous statements to (i) and (ii) are true when $R$ is replaced by the irreducible hypersurface $\hat R$; see \cite[\S7.4, 7.5]{Lamot}.  On the other hand, considering the homomorphisms
\[
\pi_1(J^\ast(\C),\sigma)\to \pi_1(S(\C)\setminus R(\C),\sigma) \to \pi_1(S(\C)\setminus \hat R(\C),\sigma)
\]
induced by the inclusions $J^\ast \hookrightarrow S\setminus R\hookrightarrow S\setminus \hat R$, the second of these is an isomorphism, since $R\setminus\hat R$ has complex codimension at least two in $S$; see \cite[Prop. 4.1.1]{Dimca}.  Hence the statements (i) and (ii) regarding $R$ are true. 

Since the representation $\lambda$ factors as
\[
\lambda: \pi_1(J^\ast(\C),\sigma) \ \to \ \pi_1(S(\C)\setminus R(\C),\sigma) \ \to \  \bigO(\mathbb V_{\C,\sigma},\phi_\sigma)
\]
statement (i) shows that part (a) may be reduced to proving that either $\delta'$ or $-\delta'$ is contained in the orbit of $\delta$ under $\pi_1(S(\C)\setminus R(\C),\sigma)$.  Following the same reasoning as \cite[\S7.6]{Lamot}, one may then accomplish this using statement (ii), the Picard-Lefschetz formula, and the nondegeneracy of $\phi_\sigma$ on $\mathbb V_\sigma$.

Part (b) then follows from part (a) using (again) the Picard-Lefschetz formula and the nondegeneracy of $\phi_\sigma$; see \cite[p.46]{Lamot}.
\end{proof}

\begin{cor}\label{f_or_f_cor}
The image of the monodromy representation $\lambda$ in (\ref{rhoL}) is either finite or Zariski-dense in the complex algebraic group $\bigO((\mathbb V_{\sigma})_\C,\phi_\sigma)$.
\end{cor}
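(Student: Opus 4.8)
The plan is to run the classical dichotomy argument for the monodromy of a Lefschetz pencil (as in \cite{Lamot} or \cite[Expos\'e XVIII]{SGA7}), which the earlier results of this section make available. Write $\Gamma:=\lambda(\pi_1(J^\ast(\C),\sigma))$, let $G\subseteq\bigO((\mathbb V_\sigma)_\C,\phi_\sigma)$ be its Zariski closure, and set $\mathfrak g:=\Lie(G)\subseteq\mathfrak{so}((\mathbb V_\sigma)_\C,\phi_\sigma)$. By Picard--Lefschetz theory, $\Gamma$ is generated by the orthogonal reflections $s_\delta$ in the $42$ vanishing cycles $\delta$ of the Lefschetz fibration $\mathcal Y\to J$; each $\delta$ is non-isotropic (one has $\phi_\sigma(\delta,\delta)=\pm 2$), these cycles span $(\mathbb V_\sigma)_\C$, the $s_\delta$ form a single $\Gamma$-orbit up to sign by Proposition \ref{single_orbit}(a), and $\Gamma$ acts irreducibly on $(\mathbb V_\sigma)_\C$ by Proposition \ref{single_orbit}(b). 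If $\mathfrak g=0$ then $G$, and hence $\Gamma$, is finite, which is the first alternative. So I would then assume $\mathfrak g\neq 0$ and prove $G=\bigO((\mathbb V_\sigma)_\C,\phi_\sigma)$, equivalently $\mathfrak g=\mathfrak{so}((\mathbb V_\sigma)_\C,\phi_\sigma)$.

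The first step is to produce a ``small'' element of $\mathfrak g$. Fix a vanishing cycle $\delta$ and write $(\mathbb V_\sigma)_\C=\C\delta\oplus\delta^\perp$; the involution $\mathrm{Ad}(s_\delta)$ of $\mathfrak{so}((\mathbb V_\sigma)_\C)\simeq\wedge^2(\mathbb V_\sigma)_\C$ then has $(+1)$-eigenspace $\wedge^2(\delta^\perp)=\mathfrak{so}(\delta^\perp)$ and $(-1)$-eigenspace $\delta\wedge\delta^\perp$. As $\mathfrak g$ is $\mathrm{Ad}(\Gamma)$-stable it is $\mathrm{Ad}(s_\delta)$-stable, so $\mathfrak g=\bigl(\mathfrak g\cap\mathfrak{so}(\delta^\perp)\bigr)\oplus\bigl(\mathfrak g\cap(\delta\wedge\delta^\perp)\bigr)$. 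Were the second summand zero for every vanishing cycle, $\mathfrak g$ would annihilate every $\delta$, hence all of $(\mathbb V_\sigma)_\C$, forcing $\mathfrak g=0$. Thus for some $\delta_0$ there is $0\neq v_0\in\delta_0^\perp$ with $\delta_0\wedge v_0\in\mathfrak g$, and since $\phi_\sigma(\delta_0,\delta_0)\neq0$ the vectors $\delta_0,v_0$ are independent, so $\delta_0\wedge v_0$ has rank $2$ as an endomorphism of $(\mathbb V_\sigma)_\C$. Applying $\mathrm{Ad}(\Gamma)$ and using transitivity (up to sign) on the vanishing cycles, $\mathfrak g$ contains a rank-$2$ element $\delta\wedge v_\delta$ for each vanishing cycle $\delta$.

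To conclude, I would first upgrade the $\Gamma$-irreducibility of $(\mathbb V_\sigma)_\C$ to $G^\circ$-irreducibility. By Clifford theory $(\mathbb V_\sigma)_\C$ splits into $G^\circ$-isotypic components that $\Gamma$ permutes transitively; but a reflection is a rank-one perturbation of the identity, so it can permute two distinct such components nontrivially only if they, and hence all of them, are lines --- and this is impossible, because $(\mathbb V_\sigma)_\C$ is odd-dimensional with $\phi_\sigma$ nondegenerate, which leaves exactly one self-paired character line, necessarily $\Gamma$-fixed, contradicting transitivity; a parallel rank-one argument on the multiplicity space rules out a single isotypic component with multiplicity $>1$. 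Hence $G^\circ$ is semisimple (a central element would act by a scalar, hence trivially inside $\SO$) and acts irreducibly on $(\mathbb V_\sigma)_\C\simeq\C^9$, so $\mathfrak g$ is a semisimple subalgebra of $\mathfrak{so}_9$ that acts irreducibly on $\C^9$ and contains an element of rank $2$. The only semisimple subalgebras of $\mathfrak{so}_9$ acting irreducibly on $\C^9$ are $\mathfrak{so}_9$, the image of $\mathfrak{so}_3$ under its $9$-dimensional irreducible representation, and $\mathfrak{so}_3\times\mathfrak{so}_3$ under $\mathrm{Sym}^2\boxtimes\mathrm{Sym}^2$; in the last two, every nonzero element has rank at least $6$, so $\mathfrak g=\mathfrak{so}_9$, i.e.\ $G^\circ=\SO((\mathbb V_\sigma)_\C,\phi_\sigma)$. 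Since $\Gamma$ contains reflections, which have determinant $-1$, $G$ strictly contains $G^\circ$, whence $G=\bigO((\mathbb V_\sigma)_\C,\phi_\sigma)$. I expect the main obstacle to be this final passage --- extracting $\mathfrak g=\mathfrak{so}_9$ from the presence of rank-$2$ elements; it can alternatively be carried out directly in the spirit of \cite[Expos\'e XVIII]{SGA7}, by bracketing the elements $\delta\wedge v_\delta$ with one another and using that the vanishing cycles span $(\mathbb V_\sigma)_\C$ to produce every infinitesimal rotation in a $2$-plane.
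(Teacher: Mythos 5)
Your proposal is correct, but it takes a genuinely different route from the paper. The paper disposes of this corollary in one line: it applies Deligne's Lemma~4.4.2$^{s}$ from \cite{Del2} to the quadratic space $((\mathbb V_{\sigma})_\C,-\phi_\sigma)$, using only Proposition \ref{single_orbit}(a) and the Picard--Lefschetz formula; the finite-or-dense dichotomy for a reflection group generated by a single orbit (up to sign) of vanishing cycles is exactly the content of that lemma, in any dimension. You instead reprove the dichotomy by hand: the $\mathrm{Ad}(s_\delta)$-eigenspace decomposition of $\mathfrak g$ inside $\wedge^2(\mathbb V_\sigma)_\C$ to extract a rank-$2$ element (using that the vanishing cycles span $\mathbb V_\sigma$), Clifford theory plus the rank-one nature of $s_\delta-\mathrm{id}$ to force $G^\circ$ to act irreducibly and be semisimple, and finally the classification of irreducible $9$-dimensional orthogonal representations of semisimple Lie algebras ($\mathfrak{so}_9$, the principal $\mathfrak{sl}_2$ in its $\mathrm{Sym}^8$, and $\mathfrak{so}_3\times\mathfrak{so}_3$ acting on $3\boxtimes 3$) together with a minimal-rank bound to pin down $\mathfrak g=\mathfrak{so}_9$. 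All of these steps do check out: in the two small subalgebras every nonzero element has rank at least $8$, resp.\ $6$, on $\C^9$, and the "parallel rank-one argument" in the isotypic case works because each element of $G$ is then a pure tensor $T\otimes S$ and an involution of that form with a one-dimensional $(-1)$-eigenspace forces one tensor factor to be a line --- but these two steps are asserted rather than proved and would need to be written out. The trade-off is clear: the paper's citation is shorter and dimension-independent, and uses only Proposition \ref{single_orbit}(a); your argument is self-contained (also invoking part (b) and the spanning of $\mathbb V_\sigma$ by vanishing cycles) but is tied to the specific dimension $9$ through the classification step, unless one substitutes the bracketing argument in the spirit of \cite{SGA7} that you mention at the end.
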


\begin{proof}
This follows by an application of \cite[Lemma $4.4.2^s$]{Del2} to the quadratic space $((\mathbb V_{\sigma})_\C,-\phi_\sigma)$, using Proposition \ref{single_orbit}(a) and the Picard-Lefschetz formula.
\end{proof}

\noindent\emph{Proof of Theorem A}.  As noted, it suffices to prove Theorem \ref{supp_thm}.  Let $M$ denote the Zariski-closure of the image of $\lambda$ and assume that $M$ is finite.  Since the action of $M$ on $(\mathbb V_{\sigma})_\C$ is irreducible by Propostion \ref{single_orbit}(b), this implies that any $M$-invariant nonzero bilinear form on $\mathbb V_\sigma$ is either positive or negative definite.  But $\phi_\sigma$ is such a form and has signature $(2,7)$, giving a contradiction.  Hence the theorem follows from Corollary \ref{f_or_f_cor} \qed

% 5555555555555555555555555

\section{Applications to Galois representations}\label{Tate_sec}

Throughout \S\ref{Tate_sec}, $k_0$ will be a finitely generated subfield of $\C$ and $k$ will be its algebraic closure in $\C$.  We will frequently replace $k_0$ by a finite extension when necessary, sometimes without mention.  This will not be problematic, since it is sufficient to prove the statements in Theorem B over some finite extension of the original field of definition.

We will prove Theorem B by following the axiomatic approach laid out in \cite{Andre-KS}.  However, a modification of Andr\'e's axioms is required in the present situation, and we discuss more carefully those points of the argument that are potentially affected by this modification.

\subsection{Axioms}\label{axiom_subsec}

Let $X$ be a smooth projective geometrically connected surface defined over $k_0$.  Suppose
\begin{equation}\label{A_ref}
\Omega\subseteq H_\Z:= H^2(X_\C,\Z)(1)/(\text{tors})
\end{equation}
denotes a fixed sublattice of classes such that:
\begin{itemize}
\item[(i)] $\Omega$ is spanned by the cycle classes of a finite collection of numerically independent effective divisors $D_1, D_2, \ldots, D_m$ on $X$ defined over $k_0$ (and hence all of the classes in $\Omega$ are algebraic).
\item[(ii)] The divisor $D_1$ is ample.
\end{itemize}
Suppose also that $H_\Z$ has Hodge numbers $h^{-1,1}=h^{1,-1}=1, h^{0,0}>0$, and $h^{p,q}=0$ otherwise.

If
\[
\theta: H_\Z\otimes H_\Z \To H^4(X_\C,\Z)(2) \tilde\To \Z
\]
is the bilinear form given by the cup product, we define the orthogonal complement
\begin{equation}\label{V_defn}
V_\Z := \Omega^\perp\subseteq H_\Z.
\end{equation}
As $\Omega$ contains an ample class, $(V_\Z,\theta)$ is an integral polarized Hodge structure.  If $R\subseteq \C$ is any (commutative, unital) ring of characteristic zero, let $V_R=V_\Z\otimes R$.  We will also set $V:=V_\Q$.

We now suppose that $\pi:\mathcal X\to\mathcal S$ is a smooth projective family of surfaces defined over $k_0$, with $\mathcal S$ smooth and geometrically connected, that satisfies four axioms (A1) through (A4).

\begin{enumerate}
\item[(A1)] There is  a point $s\in \mathcal S(k_0)$ such that $X$ is isomorphic to $\mathcal X_s$ over $k_0$.  After fixing such an isomorphism, we may assume that $X=\mathcal X_s$.  Let $\sigma = s_\C\in\mathcal S(\C)$.
\item[(A2)]  There exist effective divisors $\mathcal D_1, \mathcal D_2, \ldots,\mathcal D_m$ on $\mathcal X$, flat over $\mathcal S$, whose pullbacks to $X=\mathcal X_s$ are numerically equivalent to the divisors $D_1, D_2, \ldots, D_m$.  Moreover, the pullback of $\mathcal D_1$ to any fiber $\mathcal X_\tau$, $\tau\in\mathcal S(\C)$, is ample.
\end{enumerate}

The elements of $\Omega\subseteq H_\Z$ thus extend to global sections of $\mathbb H_\Z:=R^2(\pi^{\an}_\C)_\ast \Z(1)/(\text{tors})$, and one of these sections restricts to an ample class at every point.  If 
\[
\phi: \mathbb H_\Z \otimes\mathbb H_\Z \To R^4(\pi_\C^{\an})_\ast \Z(2)\simeq \Z.
\]
is the cup product form, then the orthogonal complement $\mathbb V_\Z$ of the global sections coming from $\Omega$ is a polarized integral variation of Hodge structure of weight zero with Hodge numbers
\begin{equation}\label{hdgV}
h^{-1,1}=h^{1,-1}=1, \qquad h^{0,0}=:N>0
\end{equation}
and $h^{p,q}=0$ otherwise.  For a ring $R$ of characteristic zero, let $\mathbb V_R:=\mathbb V_\Z\otimes R$, and set $\mathbb V:=\mathbb V_\Q$.  With $\sigma=s_\C\in\mathcal S(\C)$ as in (A1), we have $(\mathbb V_{\Z,\sigma},\phi_\sigma)=(V_\Z,\theta)$.
\begin{enumerate}
\item[(A3)] There exists $\mu\in\mathcal S(\C)$ such that the Hodge structure $\mathbb V_\mu$ contains nonzero algebraic classes. 
\item[(A4)]  The image of the monodromy representation
\[
\Lambda: \pi_1(\mathcal S(\C),\sigma)\to\bigO(\mathbb V_,\sigma,\phi_\sigma)=\bigO(V,\theta)
\]
contains a dense subgroup of $\SO(V,\theta)$.
\end{enumerate}

These axioms are similar to the those being considered in \cite[p.207]{Andre-KS} (which in turn are similar to axioms in \cite{Rap}), but they differ in two ways.  The first is that in (A2) we focus on a subvariation of the full primitive cohomology of the family, albeit one whose complement in $\mathbb H=R^2(\pi_\C^{\an})_\ast\Q(1)$ is still algebraic.  The second and more significant difference is represented in (A3) and (A4), which together replace the single assumption that the image of period map of the variation $\mathbb V$ over $\mathcal S_\C$ contains an open subset of the period domain.

Finally we note that, since we have restricted our attention to the simpler situation where all fibers of $\pi\colon \mathcal X\to \mathcal S$ are surfaces, the Lefschetz $(1,1)$ Theorem eliminates the need to assume that the Hodge classes in each fiber of $\mathbb V_\Z$ are algebraic.

\medskip

\noindent\textbf{Main Example.}  We recall the smooth projective family $\pi:\mathcal X\to\mathcal S$ over $\Q$ constructed in \S\ref{prelim_sec}, with $\mathcal S$ smooth, geometrically connected, and of dimension 5.  If $X$ is any admissible \cc surface defined over $k_0$, then by Corollary \ref{CCfamily} there exists a point $s\in \mathcal S(k_0)$ such that $X\simeq \mathcal X_s$.  Keeping the notation $H_\Z$ above, let $\Omega\subseteq H_\Z$ denote the span of the cycle classes of the canonical divisor and an Albanese fiber.  As in \S\ref{VHS_subsec}, these two divisor classes are arise from effective divisors on $\mathcal X$ that are flat over $\mathcal S$, one of which gives an ample divisor class on all fibers. Thus taking the orthogonal complement yields the variation of Hodge structure $\mathbb V_\Z$ over $S$ with Hodge numbers
\begin{equation}\label{CC_hodge}
h^{-1,1}=h^{1,-1} = 1, \quad h^{0,0} = 7.
\end{equation}
This verifies Axioms (A1) and (A2) for this example.  Lastly, Theorem \ref{CM_exists} and Theorem A show that (A3) and (A4) hold.

Note in this example that the image of the period map of the variation $\mathbb V_\Z$ over $\mathcal S$ cannot contain an open subset of the relevant period domain; indeed, by (\ref{CC_hodge}), the period domain has dimension 7, but the dimension of $\mathcal S$ is 5.

%%%%%%%%%%%%%%%%%%%%%%

\subsection{}\label{motives_subsec}

In the course of proving Theorem \ref{general_Tate} below (of which Theorem B is a special case), we will work with Andr\'e's theory of \emph{motivated cycles} developed in \cite{Andre-Mot}.  All motivated cycles are absolute Hodge cycles (in the sense of \cite{Del-AH}), and conjecturally both notions are equivalent to algebraic cycles modulo homological equivalence.

Given a subfield $E$ of $\C$, one may define the category of pure motives for motivated cycles over $E$ with coefficients in $\Q$; this is a Tannakian semisimple category with a fiber functor to the category of rational Hodge structures called the \emph{Betti realization}.  Below we will refer to this simply as the category of \emph{motives over $E$}.  Given two motives $\mathscr W_1$ and $\mathscr W_2$ over $E$ with Betti realizations $W_1$ and $W_2$, to say that a Hodge correspondence $c: W_1\to W_2$ is \emph{motivated over $E$} is to say that $c$ is the Betti realization of a morphism $\mathscr W_1\to\mathscr W_2$.  By Tannakian formalism, the category of motives over $E$ is equivalent to the category of finite-dimensional $\Q$-representations of the associated motivic Galois group $\mathcal G_E$.  

Let us mention some motives that will appear below.  For any variety $Y$ over $E$, we let $\mathscr H^i(Y)$ denote the motive whose Betti realization is $H^i(Y,\Q)$.  Considering the case that the field $E$ contains $k_0$, the variety $X_E$ gives (with a harmless ambiguity of notation) the motive $\mathscr H^2(X)$ over $E$.  Since algebraic cycles are motivated, we have a submotive of $\mathscr H^2(X)$ whose Betti realization is the algebraic subspace $\Omega\otimes \Q$ in $H^2(X,\Q)$: as these cycles are defined over $k_0$, this submotive is the sum of $m=\rank \Omega$ copies of the trivial motive over $E$.  As the cup product $H^2(X,\Q)\otimes H^2(X,\Q)\to H^4(X,\Q)$ is motivated, there is also a motive $\mathscr V$ with Betti realization $V=(\Omega\otimes\Q)^\perp$.  We note in particular that 
\begin{equation}\label{motive_decomp}
\mathscr H^2(X)(1)\simeq \mathbf{1} \oplus \cdots\oplus \mathbf{1} \oplus \mathscr V,
\end{equation}
where there are $m$ summands of the trivial motive $\mathbf{1}$ on the right.  In the case when $E=\C$, we will denote the motive $\mathscr V$ as $\mathscr V_\C$.  In general, we will use the script notation to denote the motives that are realized by certain cohomological objects.  For instance, $\mathscr C^+(\mathscr V)$ will denote the motive with Betti realization $C^+(V)$ (where $C^+(V)$ denotes the even Clifford algebra of the quadratic space $(V,\theta)$), and $\enmot(\mathscr H^i(Y))$ will denote the motive with Betti realization $\End(H^i(Y,\Q))$.

The idea of the proof of Theorem \ref{general_Tate} is to show (in Theorem \ref{AH_over_k}) that $\mathscr V$ belongs to the Tannakian category generated by the motives of abelian and zero-dimensional varieties over $k_0$, and then to exploit the work of Faltings \cite{Fal, Fal-Wus} on abelian varieties.

%%%%%%%%%%%%%%%%%%%%%%

\subsection{}\label{KS_subsec}

We now invoke the Kuga--Satake--Deligne construction, which associates abelian schemes to certain types of variations of Hodge structure, specifically those over smooth connected varieties with that are polarized of weight zero and have Hodge type $\set{(-1,1),(0,0),(1,-1)}$ with $h^{-1,1}=1$.  For more details about the construction, we refer to \cite{Del-K3, Andre-KS}.  Since $\mathbb V$ is of this type, we obtain:

\begin{thm}[Kuga--Satake \cite{KS}, Deligne \cite{Del-K3}]\label{KS_thm}
After perhaps replacing $\mathcal S$  by a connected finite \'etale covering, as well as the variation $\mathbb V_\Z$ over $\mathcal S_\C$ by the corresponding pullback variation, the following exist:
\begin{itemize}
\item[(a)] a complex abelian scheme $a:\mathcal A\to \mathcal S_\C$,
\item[(b)] a ring $B$ with an embedding $\nu:B\hookrightarrow \End_{\mathcal S_\C}(\mathcal A)$,
\item[(c)] an isomorphism of variations of Hodge structure
\begin{equation}\label{isom_of_rings}
u_\Z: C^+(\mathbb V_\Z) \tilde{\To} \underline{\End}_{B}(R^1 a^{\an}_\ast \Z)
\end{equation}
that induces an underlying isomorphism of local systems of rings.
\end{itemize}
\end{thm}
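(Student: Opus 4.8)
The plan is to run the Kuga--Satake--Deligne construction fiberwise and then spread it out over $\mathcal S_\C$, the connected finite \'etale covering allowed in the statement being forced precisely by the spreading-out; the finitely many \'etale coverings that arise compose to a single connected finite \'etale covering of $\mathcal S_\C$.  For the pointwise construction, recall that with $\sigma$ as in (A1) the fiber $(\mathbb V_{\Z,\sigma},\phi_\sigma)=(V_\Z,\theta)$ is a polarized integral Hodge structure of weight $0$ and type $\set{(-1,1),(0,0),(1,-1)}$, with $h^{-1,1}=1$ and $h^{0,0}=N$, so that $\theta$ has signature $(2,N)$.  Let $C^+(V)$ be the even Clifford algebra of $(V,\theta)$.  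Choosing an oriented orthonormal basis $(e_1,e_2)$ of the positive-definite ``Hodge plane'' $P_\sigma\subseteq V_\R$ --- the real $2$-plane spanned by the real and imaginary parts of a generator of $\mathbb V^{1,-1}_\sigma$, oriented by that complex line --- the element $J_\sigma:=e_1e_2\in C^+(V_\R)$ satisfies $J_\sigma^2=-1$ and depends only on the oriented plane, so left multiplication by $J_\sigma$ defines a complex structure on $C^+(V_\R)$, hence a Hodge structure of weight $1$ and type $\set{(1,0),(0,1)}$ on the lattice $C^+(V_\Z)$; an auxiliary element $f\in C^+(V)$ with $f^\iota=-f$ provides a polarization (\cite{KS,Del-K3}), so this is the Hodge structure of the \emph{Kuga--Satake abelian variety}.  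Now let $B:=C^+(V)$ act on $C^+(V)$ by \emph{right} multiplication; this commutes with left multiplication, in particular with $J_\sigma$, so realizes $B$ as a ring of Hodge endomorphisms, and the ring of $B$-linear endomorphisms of $C^+(V)$ is exactly the ring of left multiplications, canonically $\cong C^+(V)$ as a ring.  Regarded inside $\End$ of the weight-$1$ Kuga--Satake Hodge structure, this copy of $C^+(V)$ carries a weight-$0$ Hodge structure; the action on it is conjugation, which factors through $\SO(V)$, so via the $\SO(V)$-equivariant isomorphism $C^+(V)\simeq\bigoplus_k\Lambda^{2k}V$ it is precisely the weight-$0$ Hodge structure $C^+(\mathbb V_\Z)_\sigma$ induced functorially by the Hodge structure of $V$.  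This is the fiberwise form of (\ref{isom_of_rings}), the bimodule bookkeeping producing the ring structure.

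To globalize, recall that over $\mathcal S_\C$ the variation $\mathbb V_\Z$ amounts to a monodromy $\pi_1(\mathcal S_\C,\sigma)\to\bigO(V_\Z,\theta)$ together with a holomorphic period map into the period domain $D$ of polarized Hodge structures of the above type --- a Hermitian symmetric domain for the orthogonal group $\bigO(V_\R,\theta)\cong\mathrm{O}(2,N)$ --- while the Clifford construction, being functorial, yields a local system of rings $C^+(\mathbb V_\Z)$ on $\mathcal S_\C$ (with monodromy by Clifford automorphisms).  The obstruction to globalizing the fiberwise Kuga--Satake Hodge structures --- which live on $C^+(V_\Z)$ via \emph{left} multiplication by the spin group --- is that one must lift the monodromy from $\bigO(V_\Z,\theta)$ into $\GSpin(V_\Z)$ (in particular into $\SO(V_\Z,\theta)$) so that the spin representation makes global sense; this obstruction (a class of Stiefel--Whitney type) lies in a finite group, and is killed by replacing $\mathcal S_\C$ by a suitable connected finite \'etale covering --- precisely the covering in the statement, up to a possible further such covering so that $f$ may be chosen uniformly.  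After this base change one obtains a polarized integral variation of Hodge structure of weight $1$ and type $\set{(1,0),(0,1)}$ over $\mathcal S_\C$, holomorphic because it is induced from the holomorphic period map.

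By the standard equivalence between polarized integral weight-$1$ variations of Hodge structure and polarized abelian schemes, this variation is $R^1a^{\an}_\ast\Z$ for a polarized abelian scheme $a\colon\mathcal A\to\mathcal S_\C$: after a further finite \'etale covering making the monodromy land in a principal congruence subgroup, the induced period map lifts into a Siegel modular variety with fine level structure, and one pulls back the universal abelian scheme and algebraizes via Borel's algebraicity theorem together with GAGA.  The right-multiplication action of $B=C^+(V)$ on $R^1a^{\an}_\ast\Z$ is flat and is a morphism of Hodge structures in every fiber, so by the theorem of the fixed part together with the rigidity of abelian schemes it arises from an embedding $\nu\colon B\hookrightarrow\End_{\mathcal S_\C}(\mathcal A)$.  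Finally $\underline{\End}_B(R^1a^{\an}_\ast\Z)$ --- the sub-local-system of endomorphisms commuting with $\nu(B)$, which in every fiber is the ring of left multiplications --- receives the flat, filtration-compatible globalization of the fiberwise identification above, yielding the desired isomorphism of variations of Hodge structure $u_\Z\colon C^+(\mathbb V_\Z)\tilde\To\underline{\End}_B(R^1a^{\an}_\ast\Z)$; it underlies an isomorphism of local systems of rings because left and right multiplications on $C^+(V)$ commute and $C^+(V)$ is its own commutant under right multiplication.

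The main obstacle, and the only place where passing to a connected finite \'etale covering of $\mathcal S_\C$ is genuinely required, is the globalization step: lifting the orthogonal monodromy to the spin group (and thereby making the family of Kuga--Satake Hodge structures into an honest variation) and, along the way, acquiring a level structure fine enough to realize the associated abelian scheme.  Everything else is the standard dictionary between polarized weight-$1$ variations and abelian schemes, the theorem of the fixed part for the $\nu(B)$-action, and the elementary linear algebra of Clifford bimodules; and since the hypotheses of \S\ref{Tate_sec} --- in particular Theorem A and the family $\pi\colon\mathcal X\to\mathcal S$ of \S\ref{prelim_sec} --- are insensitive to finite \'etale base change, the conclusion holds.
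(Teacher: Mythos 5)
The paper does not prove Theorem \ref{KS_thm}: it invokes it as a known result of Kuga--Satake and Deligne, referring to \cite{KS}, \cite{Del-K3} and \cite{Andre-KS} for the construction, so there is no internal proof to compare against; your sketch is essentially the standard argument from those sources (fiberwise Clifford--algebra Hodge structure via $J=e_1e_2$, the right $C^+(V)$-action giving $B$, identification of the $B$-commutant with left multiplications, monodromy lifting after a finite \'etale cover, and algebraization via a fine moduli space of abelian varieties plus Borel and GAGA), and it is correct in outline. The one loose point is your justification of the lifting step: the obstruction class in $H^2\bigl(\pi_1(\mathcal S(\C)),\Z/2\bigr)$ is not ``killed because it lies in a finite group''; the standard argument pulls back the central extension $1\to\set{\pm1}\to\Spin\to\SO\to 1$ along the monodromy and uses residual finiteness (Malcev/Selberg) of the resulting finitely generated linear group to find a finite-index subgroup over which the extension splits, and one further finite-index reduction so that the lifted monodromy stabilizes some lattice in $C^+(V_\Q)$ (statement (c) only requires the endomorphism sheaf of $R^1a^{\an}_\ast\Z$, not that the lattice be $C^+(V_\Z)$ itself). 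These are precisely the technical points carried out in \cite{Del-K3} and \cite{Andre-KS}, so your outline is consistent with the intended proof.
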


Henceforth, we assume that $\mathcal S$ has been replaced by a finite \'etale covering, and $\pi:\mathcal X\to\mathcal S$ by its respective pullback, so as to guarantee the existence of the structures in Theorem \ref{KS_thm} over $\mathcal S$.  Note that, since we are insensitive to replacing $k_0$ by a finite extension, we may take this finite covering to also be defined over $k_0$.  Also note that after making this replacement, the properties (A1) through (A4) still hold for $\pi:\mathcal X\to\mathcal S$.  A key result in what follows is:

\begin{lem}\label{unique_map}
Let $R$ be a subring of $\C$ and consider the isomorphism
\begin{equation}\label{R_isom_of_rings}
u_\Z \otimes 1: C^+(\mathbb V_R) \tilde{\To} \underline{\End}_{B}(R^1 a^{\an}_\ast R)
\end{equation}
obtained by tensoring (\ref{isom_of_rings}) with the constant sheaf $R$.  Then (\ref{R_isom_of_rings}) is the unique isomorphism from $C^+(\mathbb V_R)$ to $\underline{\End}_{B}(R^1 a^{\an}_\ast R)$ as local systems of $R$-algebras on $\mathcal S_\C$.
\end{lem}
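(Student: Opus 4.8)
The plan is to reduce the statement to the assertion that the \emph{only} automorphism of $C^+(\mathbb V_R)$ as a local system of $R$-algebras on $\mathcal S_\C$ is the identity. Indeed, if $u'$ is a second isomorphism as in the statement, then $\beta := u' \circ (u_\Z \otimes 1)^{-1}$ is such an automorphism, and its triviality gives $u' = u_\Z \otimes 1$. Next I would reduce to the case $R = \C$: since $R$ is a subring of $\C$ it is an integral domain, the stalks of $C^+(\mathbb V_R)$ are free $R$-modules, and hence $C^+(\mathbb V_R) \hookrightarrow C^+(\mathbb V_R)\otimes_R \C = C^+(\mathbb V_\C)$; thus an automorphism that becomes the identity after $\otimes_R\C$ was already the identity. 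So it suffices to show that any automorphism $\beta$ of $C^+(\mathbb V_\C)$ as a local system of $\C$-algebras is the identity.

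Here I would exploit that $\dim \mathbb V_\sigma$ is odd (it equals $N+2 = 9$ in the Main Example), so that $C^+(V_\C) := C^+(\mathbb V_{\C,\sigma})$ is a central simple $\C$-algebra, concretely $\cong M_{16}(\C)$. Since $\mathcal S_\C$ is connected, $\beta$ is determined by its stalk $\beta_\sigma$ at $\sigma$, which is a $\C$-algebra automorphism of $C^+(V_\C)$; and being a morphism of local systems is exactly the condition that $\beta_\sigma$ commute with the monodromy action. Functoriality of the Clifford algebra turns the monodromy $\Lambda \colon \pi_1(\mathcal S(\C),\sigma) \to \bigO(V,\theta)$ into an action $c$ of $\pi_1$ on $C^+(V_\C)$ by $\C$-algebra automorphisms, and the required condition is $c_\gamma \circ \beta_\sigma = \beta_\sigma \circ c_\gamma$ for all $\gamma$. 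By Skolem--Noether, $\beta_\sigma = \mathrm{int}(g)$ is conjugation by some $g\in C^+(V_\C)^\times$. Since $c_\gamma \circ \mathrm{int}(g)\circ c_\gamma^{-1} = \mathrm{int}(c_\gamma(g))$, commutation says $\mathrm{int}(c_\gamma(g)) = \mathrm{int}(g)$, i.e. $c_\gamma(g) g^{-1} \in Z(C^+(V_\C))^\times = \C^\times$ for all $\gamma$; equivalently, the line $\C\cdot g \subseteq C^+(V_\C)$ is stable under $c(\pi_1)$.

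The final step is big monodromy. The element $-\mathrm{id}_V$ acts trivially on $C^+(V)$, and since $\dim V$ is odd one has $\bigO(V,\theta) = \SO(V,\theta)\times\{\pm\mathrm{id}_V\}$, so the image of $\bigO(V,\theta)$ in $\GL(C^+(V_\C))$ equals the image of $\SO(V,\theta)$. By Axiom (A4) the group $c(\pi_1)$ contains a Zariski-dense subset of this image; hence $\C\cdot g$ is stable under the (conjugation) action of $\SO(V,\theta)$. As $\SO(V,\theta)$ is connected semisimple it has no nontrivial characters, so it fixes the line $\C\cdot g$ pointwise, i.e. $g$ centralizes the image of $\SO(V,\theta)$ in $C^+(V_\C)^\times$. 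But under the conjugation action $C^+(V_\C) \cong \bigoplus_{k\ \mathrm{even}} \bigwedge^k V_\C$ as an $\SO(V,\theta)$-module, and for $0 < k < \dim V$ the invariants of $\bigwedge^k V_\C$ vanish; thus the centralizer of $\SO(V,\theta)$ in $C^+(V_\C)$ is $\bigwedge^0 V_\C = \C\cdot 1$. Therefore $g\in\C^\times\cdot 1$, so $\beta_\sigma = \mathrm{int}(g) = \mathrm{id}$ and $\beta = \mathrm{id}$.

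The argument is short, so there is no single heavy computation; the crux is conceptual. First, one must arrange that every relevant algebra automorphism is \emph{inner}, which is where the oddness of $\dim\mathbb V$ enters (making $C^+$ central simple and Skolem--Noether applicable). Second, one must turn the abstract compatibility with monodromy into a concrete constraint on the conjugating unit $g$, and then use (A4) to force $\C\cdot g$ to be a one-dimensional, hence trivial, $\SO(V,\theta)$-subrepresentation of $C^+(V_\C)$ — which pins $g$ down to a scalar. The minor bookkeeping points are the reduction to $R = \C$ and the identification of the trivial isotypic piece of $C^+(V_\C)$ under conjugation.
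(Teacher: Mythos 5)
Your overall strategy is the same as the paper's: compose the two isomorphisms to get an automorphism of the local system, observe this is a $\pi_1(\mathcal S(\C),\sigma)$-equivariant algebra automorphism of the stalk, complexify, and use the density in (A4) to force it to be the identity. The paper finishes by citing Deligne \cite[Prop.\ 3.5]{Del-K3} (an automorphism of $C^+(V_\C)$ commuting with a dense subgroup of $\Spin(V_\C)$ acting by conjugation is the identity), whereas you supply a direct proof of this step via Skolem--Noether and the decomposition $C^+(V_\C)\simeq\bigoplus_{k\ \mathrm{even}}\bigwedge^k V_\C$. Your reductions (to an automorphism, and to $R=\C$ using freeness of the stalks over the domain $R$) are fine, and your centralizer computation is correct \emph{when $\dim V$ is odd}.

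The gap is precisely that parity assumption. Lemma \ref{unique_map} is stated in the axiomatic setting of \S\ref{axiom_subsec}, where $\dim V = N+2$ with $N=h^{0,0}>0$ arbitrary; the paper genuinely needs both parities (Lemma \ref{trivial_det} and Proposition \ref{AH_embed} treat ``$N$ even'' and ``$N$ odd'' separately), and Theorem \ref{general_Tate} is proved in this generality, not only for the Main Example where $\dim V=9$. When $\dim V$ is even, $C^+(V_\C)$ is not central simple: its center is two-dimensional, spanned by $1$ and the volume element, and $C^+(V_\C)\simeq M_r(\C)\times M_r(\C)$. Skolem--Noether then does not yield that an arbitrary algebra automorphism is inner; there are automorphisms swapping the two simple factors, and your argument says nothing to exclude them (the $\SO$-action on the center is trivial, so equivariance alone does not immediately force the automorphism to fix the center pointwise). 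To close the gap you would need an extra step in the even case --- for instance, showing that a factor-swapping automorphism cannot commute with the conjugation action of a Zariski-dense subgroup of $\Spin(V_\C)$ because the two half-spin representations are not projectively equivalent, after which Skolem--Noether applies factor by factor and your centralizer argument resumes. The paper sidesteps all of this by invoking Deligne's Prop.\ 3.5, which covers both parities. As written, your proof establishes the lemma only for odd-dimensional $V$ (in particular, it does suffice for the \cc family itself).
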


\begin{proof}
The argument comes from \cite[Lemme 6.5.1]{Del-K3}.  Given two isomorphisms of local systems of $R$-algebras
\[
C^+(\mathbb V_R) \tilde\To \underline{\End}_B(R^1 a^{\an}_{\ast} R),
\]
one obtains an automorphism of the local system $C^+(\mathbb V_R)$, i.e., a $\pi_1(\mathcal S(\C),\sigma)$-invariant $R$-algebra automorphism of the even Clifford algebra $C^+(\mathbb V_{R,\sigma}) = C^+(V_{R})$; upon base change to $\C$, we then obtain a $\pi_1(\mathcal S(\C),\sigma)$-invariant $\C$-algebra automorphism of $C^+(V_{\C})$.  But by the density of the monodromy representation $\Lambda$ in $\SO(V_\C,\theta)$ from (A4), such an automorphism necessarily commutes with a dense subgroup of $\Spin(V_\C)$, which acts by conjugation on $C^+(V_\C)$.  Deligne \cite[Prop.\ 3.5]{Del-K3} shows that this implies the automorphism of $C^+(V_\C)$ is the identity, and hence the original two maps of local systems of $R$-algebras agree.
\end{proof}

Taking $R=\Q$ in Lemma \ref{unique_map}, one obtains the isomorphism of rational variations of Hodge structure that we denote simply as
\begin{equation}\label{u_isom}
u: C^+(\mathbb V) \tilde{\To} \underline{\End}_B(R^1 a^{\an}_\ast \Q).
\end{equation}
Note that under our identification $X=\mathcal X_s$, specializing $u$ at $\sigma=s_\C$ yields
\begin{equation}\label{u_isom_fiber}
u_{\sigma}: C^+(V) \tilde\To \End_B(H^1(\mathcal A_{\sigma},\Q)),
\end{equation}
which is an isomorphism of both weight zero Hodge structures and $\Q$-algebras.  One has similar isomorphisms at all other $\tau\in\mathcal S(\C)$.

\begin{prop}\label{AH_over_C}
The Hodge correspondence $u_{\sigma}$ in (\ref{u_isom_fiber}) is motivated over $\C$.
\end{prop}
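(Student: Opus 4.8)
The plan is to deduce that $u_\sigma$ is motivated from the fact that it is, in some sense, the unique reasonable isomorphism available, and then to invoke the functoriality of the Kuga--Satake construction in families together with the big monodromy in (A4). First I would recall that $u_\sigma$ is a priori only a Hodge isomorphism $C^+(V)\,\tilde\To\,\End_B(H^1(\mathcal A_\sigma,\Q))$. The key point, exactly as in Deligne \cite{Del-K3}, is that although the Kuga--Satake isomorphism over a single point need not be motivated on the nose, its counterpart over the base $\mathcal S_\C$ becomes motivated after one promotes everything to the generic motive and exploits monodromy. Concretely, I would first establish that the \emph{local system of algebras} underlying $u$, together with the variation of Hodge structure, is determined by data that comes from geometry: the motive $\mathscr V_\C$ and its even Clifford motive $\mathscr C^+(\mathscr V_\C)$ are defined (the cup product being motivated, as recorded in \S\ref{motives_subsec}), and the abelian scheme $\mathcal A\to\mathcal S_\C$ gives the motive $\enmot(\mathscr H^1(\mathcal A_\sigma))$ over $\C$.

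The main step is to show that \emph{some} isomorphism $\mathscr C^+(\mathscr V_\C)\,\tilde\To\,\enmot_B(\mathscr H^1(\mathcal A_\sigma))$ of motives over $\C$ exists, and then that it must realize to $u_\sigma$. For existence, I would argue as follows. The construction of $\mathcal A$ in Theorem \ref{KS_thm} is, after passing to a suitable Mumford--Tate domain and using the fact that $\mathbb V$ is a sub-VHS of the cohomology of the family $\pi$, compatible with the one-variable Kuga--Satake picture; on a suitable level structure the abelian scheme is definable over a number field, and the Hodge class cutting out $\mathcal A$ from a self-product of Kuga--Satake abelian varieties deforms in a family. The cleanest route here, following Andr\'e \cite{Andre-KS}, is to observe that over $\C$ the Hodge cycle defining the correspondence $u$ lies in $\HH^0\bigl(\mathcal S_\C, \enmot(\ldots)\bigr)$ and hence, being the flat transport of a Hodge cycle, defines a global section of a variation; one then uses Andr\'e's theorem that Hodge cycles on abelian varieties are motivated, applied after spreading out, to conclude. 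In our setting there is a further simplification: $C^+(V)$ carries an action of $\Spin(V)$ by conjugation, and by (A4) the monodromy image is Zariski-dense in $\SO(V,\theta)$, so Lemma \ref{unique_map} (with $R=\Q$) tells us the isomorphism of VHS is unique. Uniqueness is what lets us upgrade: any motivated isomorphism of the correct shape must coincide with $u$, so it suffices to produce one such motivated isomorphism, and conversely the rigidity supplied by big monodromy forces the Hodge cycle defining $u$ to be a ``constant'' family of cycles, hence motivated at $\sigma$ as soon as it is motivated at one point of $\mathcal S(\bar\Q)$.

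Thus the argument proceeds in the following order. (1) Recall the motives $\mathscr V_\C$, $\mathscr C^+(\mathscr V_\C)$, and $\enmot_B(\mathscr H^1(\mathcal A_\sigma))$ and note that the Betti realization of a putative iso between the latter two would be an $\End_B$-algebra isomorphism of weight-zero Hodge structures. (2) Using the family $\mathcal A\to\mathcal S_\C$ and (A4), invoke Lemma \ref{unique_map} to see that the only such Hodge isomorphism is $u$ itself, so ``motivated'' need only be checked for $u$. (3) Spread out and use Andr\'e's result \cite{Andre-Mot} that Hodge cycles on abelian varieties (here, on $\mathcal A_\tau\times\mathcal A_\tau$ or rather within $\enmot(\mathscr H^1)$) are motivated, applied at a point $\tau$ where the relevant cycle is most accessible, e.g.\ a point with extra algebraic classes as furnished by (A3)/Theorem \ref{CM_exists}. (4) Transport along $\mathcal S_\C$: since motivated cycles form a local system compatible with the Gauss--Manin connection and $\mathscr H^0(\mathcal S_\C,-)$ of the ambient motive sees $u$ as a flat section, the cycle is motivated at every point, in particular at $\sigma$. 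I expect step (3)--(4), the passage from ``motivated at one good point'' to ``motivated at $\sigma$,'' to be the main obstacle: one must be careful that Andr\'e's theory of motivated cycles is stable under the specialization/generization maps in a smooth family and that the relevant Hodge cycle genuinely extends to a global section, rather than merely extending after passing to a further cover; this is precisely the point where the big monodromy result (A4), via the uniqueness in Lemma \ref{unique_map}, does the essential work by eliminating any ambiguity in how the cycle could spread out.
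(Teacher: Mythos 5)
There is a genuine gap at the step you yourself flag as the main obstacle: you never actually establish that the cycle is motivated at \emph{any} point, and the tool you invoke does not apply where you want to use it. In step (3) you appeal to Andr\'e's theorem that Hodge cycles on abelian varieties are motivated, applied to $u_\tau$ at a special point $\tau$. But $u_\tau$ is not a Hodge class on an abelian variety (or a product of abelian varieties): it lives in $\Hom\bigl(C^+(\mathbb V_\tau),\End_B(H^1(\mathcal A_\tau,\Q))\bigr)$, whose source involves the surface's cohomology in an essential way, which is exactly the object whose motivic nature is in question. The only situation in which the special point genuinely reduces you to Hodge classes on an abelian variety is when \emph{all} of $\mathbb V_\tau$ is algebraic, so that $\mathscr C^+(\mathscr V_\tau)$ is a trivial motive and $u_\tau$ is encoded by Hodge classes in $\End(H^1(\mathcal A_\tau,\Q))$. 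Axiom (A3) supplies only one nonzero algebraic class, so in the axiomatic setting in which Proposition \ref{AH_over_C} is stated no such point is available; even in the Main Example, where Theorem \ref{CM_exists} does give a maximal Picard number point, you would still have to write out this reduction and then justify the propagation in step (4) via Andr\'e's deformation theorem for motivated cycles \cite{Andre-Mot} (applied to the flat section $u$ of the relevant family of motives). As written, the proposal asserts the conclusion of step (3) without an argument that works under (A1)--(A4). A smaller point: Lemma \ref{unique_map} gives uniqueness of $u$ among monodromy-equivariant algebra isomorphisms of local systems, not among Hodge isomorphisms of the fiber, so your step (2) should be phrased accordingly; and if you do obtain motivatedness by deforming the global section $u$ itself, the uniqueness is not even needed to identify the result with $u_\sigma$.

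For comparison, the paper's proof avoids special points and axiom (A3) entirely. Following \cite[Prop.\ 6.2.1]{Andre-KS}, one shows via tensor constructions and Deligne's theorem of the fixed part that the space of $\pi_1(\mathcal S(\C),\sigma)$-invariant elements of $\Hom\bigl(C^+(V),\End_B(H^1(\mathcal A_\sigma,\Q))\bigr)$ is the Betti realization of a motive over $\C$; the motivic Galois group $\mathcal G_\C$ therefore acts on it and preserves the subset of $\Q$-algebra isomorphisms. By Lemma \ref{unique_map} that subset is the single point $u_\sigma$, so $\mathcal G_\C$ fixes $u_\sigma$, i.e.\ $u_\sigma$ is motivated. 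Your deformation-from-a-special-point strategy is closer in spirit to how the paper later handles Proposition \ref{AH_embed} (where motivatedness of $\underline\beta_\mu$ at the point $\mu$ of (A3) is proved by hand and then deformed), but for the present statement it both overshoots the hypotheses and leaves the decisive verification unproved.
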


\begin{proof}
One argues as in \cite[Prop.\ 6.2.1]{Andre-KS}.  The point is to show that the collection of all $\pi_1(\mathcal S(\C),\sigma)$-invariant elements in $\Hom\bigl(C^+(V),\End_B(H^1(A_\sigma,\Q))\bigr)$ is the Betti realization of a motive over $\C$, by using tensor constructions and Deligne's theorem of the fixed part.  Since the motivic Galois group $\mathcal G_\C$ preserves isomorphisms of $\Q$-algebras $C^+(V) \to \End_B(H^1(A_\sigma,\Q))$, this implies it will preserve such $\Q$-algebra isomorphisms that are also invariant under $\pi_1(\mathcal S(\C),\sigma)$.  By Lemma \ref{unique_map}, $u_\sigma$ is the only such isomorphism, so $u_\sigma$ is fixed by $\mathcal G_\C$.
\end{proof}

Recall the special point $\mu\in\mathcal S(\C)$ of (A3), and let us fix a subspace $W_\mu\subseteq \mathbb V_\mu$ of codimension one by taking the orthogonal complement some nonzero algebraic class.  Then $W_\mu$ is a rational Hodge structure with Hodge numbers $h^{-1,1}=h^{1,-1}=1$ and $h^{0,0}=N-1$.  One may construct the Kuga-Satake variety $\KS(W_\mu)$ of $W_\mu$ (defined up to isogeny since $W_\mu$ has rational coefficients), along with an isomorphism of Hodge structures
\begin{equation}\label{mu_isom}
v_\mu: C^+(W_\mu) \tilde\To \End_{B'}(H^1(\KS(W_\mu),\Q)),
\end{equation}
for a particular ring of endomorphisms $B'$ of $\KS(W_\mu)$.  The abelian variety $\KS(W_\mu)$ is isogenous to $\mathcal A_\mu^2$ (see \cite[4.1.5]{Andre-KS}).  Using this fact, Proposition \ref{AH_over_C}, and the fact that Hodge cycles on abelian varieties are motivated \cite{Andre-Mot}, one may show that the Hodge correspondence $v_\mu$ is also motivated.

\begin{lem}\label{trivial_det}
The motive $\det \mathscr V_\C$ is trivial.
\end{lem}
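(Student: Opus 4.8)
The plan is to exhibit $\det\mathscr V_\C$ as the fibre of a rank-one motivated variation of Hodge structure whose monodromy can be trivialized, and then to invoke the motivated form of Deligne's theorem of the fixed part, exactly as in the proof of Proposition \ref{AH_over_C}.

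First I would reduce to the case in which the monodromy of $\mathbb V$ is special orthogonal. Since $\det\circ\Lambda\colon\pi_1(\mathcal S(\C),\sigma)\to\{\pm1\}$ is a character of order at most two, its kernel corresponds to a connected finite \'etale cover of $\mathcal S_\C$; replacing $\mathcal S$ by this cover is harmless, both because the assertion of the lemma depends only on $X_\C$ and because, as in \S\ref{KS_subsec}, finite \'etale base changes preserve the whole setup. We may thus assume $\Lambda$ takes values in $\SO(\mathbb V_\sigma,\phi_\sigma)$, so that the rank-one local system underlying $\det\mathbb V$ has trivial monodromy. Moreover, a direct computation from the Hodge numbers $h^{-1,1}=h^{1,-1}=1$, $h^{0,0}=N$ shows that $\det\mathbb V$ is pure of Hodge type $(0,0)$, so its Hodge filtration is constant; hence $\det\mathbb V$ is the constant variation $\Q(0)$ over $\mathcal S_\C$, and in particular every class in the fibre $\det\mathbb V_\sigma=\det V$ is invariant under $\pi_1(\mathcal S(\C),\sigma)$.

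Next I would observe that $\det\mathbb V$ is a motivated subvariation of a tensor construction on $\mathbb H=R^2(\pi^{\an}_\C)_\ast\Q(1)$: the inclusion $\mathbb V\subseteq\mathbb H$ is the image of the motivated projector orthogonal to the algebraic, horizontal classes $\xi_1,\dots,\xi_m$ spanning $\Omega$ (a projector built from $\mathrm{id}$, the $\xi_i$, and the motivated pairing $\phi$), and $\det\mathbb V=\bigwedge^{N+2}\mathbb V$ is cut out of $\mathbb V^{\otimes(N+2)}$ by the motivated antisymmetrization projector. Hence, just as in the proof of Proposition \ref{AH_over_C}, the theorem of the fixed part in the motivated setting \cite{Andre-Mot} shows that every $\pi_1(\mathcal S(\C),\sigma)$-invariant class in $\det\mathbb V_\sigma$ is a motivated cycle, i.e.\ lies in the image of $\Hom(\mathbf 1,\det\mathscr V_\C)$ under the Betti realization. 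By the preceding paragraph this applies to a nonzero class, so there is a nonzero morphism $\mathbf 1\to\det\mathscr V_\C$ in the category of motives over $\C$; since source and target both have rank one, it is an isomorphism. Therefore $\det\mathscr V_\C\simeq\mathbf 1$.

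I expect the only genuinely delicate points to be the verification that passing to the finite cover is legitimate here (it is, by the bookkeeping of \S\ref{axiom_subsec}--\S\ref{KS_subsec}) and the check that $\det\mathbb V$ really is obtained from $R^2(\pi^{\an}_\C)_\ast\Q(1)$ by motivated operations, so that Andr\'e's theorem of the fixed part applies; everything else is formal. One may note in passing that the motivated cup-product pairing $\theta$ already identifies $\mathscr V_\C$ with $\mathscr V_\C^\vee$ and hence yields $(\det\mathscr V_\C)^{\otimes 2}\simeq\mathbf 1$ for free, which makes the conclusion expected but does not by itself give triviality — for that, the monodromy input above is needed.
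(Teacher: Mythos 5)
Your reductions are fine (passing to the double cover that kills $\det\circ\Lambda$ is harmless, since $\det\mathscr V_\C$ depends only on the fibre, and $\det\mathbb V$ is indeed of type $(0,0)$ with trivial monodromy after the cover), but the pivotal step is not: Andr\'e's theorem of the fixed part for motivated cycles in \cite{Andre-Mot} is a \emph{deformation} principle — if a flat section of the relevant family of realizations is motivated at \emph{one} point of the base, then it is motivated at every point — and not the assertion that every monodromy-invariant class of type $(0,0)$ is a motivated cycle. What monodromy invariance plus Deligne's fixed part theorem actually gives is that your generator of $\det\mathbb V_\sigma$ is the restriction of a Hodge class on a smooth compactification of (a fibre power of) the total space; to promote this to a motivated cycle you would need an unknown instance of ``Hodge implies motivated''. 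Put differently, the fixed-part formalism only exhibits the invariant line as the realization of a submotive, which here is all of $\det\mathscr V_\C$ and hence says nothing; the content of the lemma is precisely to identify this rank-one motive with $\mathbf 1$, i.e.\ to show its generator is motivated, and a rank-one motive over $\C$ of weight $0$ whose realization is spanned by a Hodge class is not known in general to be trivial. Note also that your argument never uses (A3) or the Kuga--Satake construction, which is a warning sign: if it worked, the lemma would be formal, whereas this is exactly where the difficulty lies.

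The paper supplies the missing motivatedness from abelian varieties, where Hodge classes \emph{are} known to be motivated. For $N$ even, $\det V$ is identified with an $\bigO(V,\theta)$-stable subquotient of the filtered algebra $C^+(V)$, and then transported through the motivated Kuga--Satake isomorphism of Proposition \ref{AH_over_C} into $\enmot_B(\mathscr H^1(\mathcal A_\sigma))$; since $\SO(V,\theta)$ acts trivially on $\det V$, the corresponding classes are fixed by the Hodge group of $H^1(\mathcal A_\sigma,\Q)$, hence are Hodge classes on an abelian variety, hence motivated, so the submotive is trivial. For $N$ odd the same Clifford-algebra argument is run for the even-dimensional $W_\mu$ at the special point $\mu$ of (A3), using the motivated correspondence $v_\mu$ of (\ref{mu_isom}) and the algebraic class splitting $\mathbb V_\mu$, and only then does a deformation argument — now a legitimate use of the motivated fixed part theorem, since motivatedness is known at $\mu$ — carry the triviality of $\det\mathbb V_\tau$ from $\mu$ to $\sigma$. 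Your closing observation that the pairing gives $(\det\mathscr V_\C)^{\otimes 2}\simeq\mathbf 1$ is correct but, as you yourself note, insufficient.
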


\begin{proof}
With $N = h^{0,0}(\mathbb V_\Z) = h^{0,0}(V)>1$ as in (\ref{hdgV}), there are two cases:

\smallskip

\noindent \emph{$N$ even.}  In this case the lemma follows fairly directly from Proposition \ref{AH_over_C}.  Arguing as in \cite[6.2.2]{Andre-KS}, one may consider the $\bigO(V,\theta)$-stable increasing filtration $F_j$ on $C^+(V)$ by the images of the spaces $\bigoplus_{i=0}^j V^{\otimes 2i}$.  Then the $\bigO(V,\theta)$-modules $\det V$ and $F_{N/2}/F_{(N-1)/2}$ are isomorphic, and one may use this to identify $\det V$ noncanonically with a subspace of $C^+(V)$ that is stabilized by $\bigO(V,\theta)$.  Then, as the motivic Galois group $\mathcal G(\mathscr V_\C)$ of $\mathscr V_\C$ is a subgroup of $\bigO(V,\theta)$, this identifies $\det \mathscr V_\C$ with a submotive of the motive $\mathscr C^+(\mathscr V_\C)$, and hence with a submotive $\mathscr M$ of $\enmot_B (\mathscr H^1(\mathcal A_\sigma))$ by Proposition \ref{AH_over_C}.   Let $M\subseteq \End(H^1(\mathcal A_\sigma,\Q))$ denote the Betti realization of $\mathscr M$.  Since $\bigO(V,\theta)$, and hence $\mathcal G(\mathscr V_\C)$, must act on $\mathscr M\simeq \det \mathscr V_\C$ by $\set{\pm 1}$, it follows from the (Kuga-Satake) construction of $\mathcal A_\sigma$ that the Hodge group of $H^1(\mathcal A_\sigma,\Q)$ acts trivially on $M$; thus the classes in $M$ are algebraic and $\mathscr M$ is the trivial motive.

\smallskip

\noindent\emph{$N$ odd.}  In this case one proceeds along the lines of \cite[6.4.1]{Andre-KS}.  Since the Hodge structure $W_\mu$ has even dimension $N-1$, one may use the motivated correspondence $v_\mu$ in (\ref{mu_isom}) to argue as in the previous case that $\det W_\mu$ is the realization of the trivial motive, and hence so is $\det \mathbb V_\mu$.  Considering the local system $\det \mathbb V$, one then applies a deformation argument to conclude that $\det \mathbb V_\tau$ is the realization of the trivial motive at all other $\tau\in\mathcal S(\C)$.  Since the realization of $\det \mathscr V_\C$ is $\det \mathbb V_\sigma$, this gives the lemma in this second case.
\end{proof}

\begin{prop}\label{AH_embed}
Over $\C$, there is a motivated correspondence
\begin{equation}\label{AH_gamma_C}
\gamma: V \hookrightarrow \End (H^1(\mathcal A_{\sigma},\Q)).
\end{equation}
\end{prop}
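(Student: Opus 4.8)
The plan is to realize $\gamma$ as the Betti realization of a morphism of motives over $\C$ built by composing three ingredients: an $\bigO(V,\theta)$-equivariant embedding of $V$, up to a determinant twist, into the even Clifford algebra $C^{+}(V)$; the triviality of $\det\mathscr V_\C$ from Lemma \ref{trivial_det}; and the motivated isomorphism $\mathscr C^{+}(\mathscr V_\C)\simeq\enmot_B(\mathscr H^1(\mathcal A_\sigma))$ of Proposition \ref{AH_over_C}. What makes the first ingredient legitimate is that the polarization $\theta$ is a morphism of motives, so the motivic Galois group $\mathcal G(\mathscr V_\C)$ lies in $\bigO(V,\theta)$; hence any $\bigO(V,\theta)$-equivariant linear map between tensor constructions on $V$ is automatically a morphism of motives over $\C$.

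I would first treat the case $\dim V$ odd, which is the case $\dim V=9$ relevant to \cc surfaces. Let $\omega\in C^{-}(V)$ be the volume element: it is central in $C(V)$, invertible (as $\theta$ is nondegenerate), and spans an $\bigO(V,\theta)$-stable line on which $\bigO(V,\theta)$ acts through the determinant character. Since $\omega$ has odd Clifford degree, left multiplication by $\omega$ carries $V\subseteq C^{-}(V)$ injectively into $C^{+}(V)$, giving an $\bigO(V,\theta)$-equivariant embedding $V\otimes\det V\hookrightarrow C^{+}(V)$ and hence a morphism of motives $\mathscr V_\C\otimes\det\mathscr V_\C\hookrightarrow\mathscr C^{+}(\mathscr V_\C)$. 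As $\det\mathscr V_\C\simeq\mathbf 1$ by Lemma \ref{trivial_det}, this is an embedding $\mathscr V_\C\hookrightarrow\mathscr C^{+}(\mathscr V_\C)$. Composing with Proposition \ref{AH_over_C} and with the inclusion $\enmot_B(\mathscr H^1(\mathcal A_\sigma))\hookrightarrow\enmot(\mathscr H^1(\mathcal A_\sigma))$ — a morphism of motives, being the centralizer of the algebraic endomorphisms $\nu(B)$, i.e.\ an intersection of kernels of morphisms of motives — produces the desired morphism, whose Betti realization is $\gamma$.

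When $\dim V$ is even the volume element lies in $C^{+}(V)$ and this embedding is unavailable ($V\otimes\det V$ then occurs in $C^{-}(V)$). Here I would pass to the auxiliary variation $\mathbb V':=\mathbb V\oplus\mathbf 1$, of odd rank $\dim V+1$ and still of Hodge type $\set{(-1,1),(0,0),(1,-1)}$ with $h^{-1,1}=1$, form its Kuga--Satake family $\mathcal B\to\mathcal S_\C$ as in Theorem \ref{KS_thm}, re-run the odd case for $\mathscr V':=\mathscr V\oplus\mathbf 1$ (using $\det\mathscr V'\simeq\det\mathscr V_\C\simeq\mathbf 1$) to obtain a motivated embedding $\mathscr V'\hookrightarrow\enmot(\mathscr H^1(\mathcal B_\sigma))$, and restrict to the motivated direct summand $\mathscr V$ — so $\gamma$ is produced with $\mathcal B_\sigma$ in place of $\mathcal A_\sigma$ (equivalently, after enlarging $\mathcal A$). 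The point needing care is that the monodromy of $\mathbb V'$ is only Zariski-dense in $\SO(V)\subseteq\SO(V')$, so the analogues of Lemma \ref{unique_map} and Proposition \ref{AH_over_C} for $\mathcal B$ cannot copy Deligne's argument directly; instead one uses that $C^{+}(V')\otimes_\Q\C$ is a central simple $\C$-algebra (since $\dim V'$ is odd), so by Skolem--Noether every automorphism is inner, and a $\pi_1$-invariant inner automorphism must be conjugation by an element normalizing $\Spin(V_\C)$ and centralizing a Zariski-dense subgroup of it, hence by a central element, hence the identity.

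I expect the even-dimensional case — and within it this re-verification of the uniqueness statement for the auxiliary family $\mathcal B$ — to be the main obstacle. The odd case, which is all that the \cc surface application requires, is purely formal given Proposition \ref{AH_over_C} and Lemma \ref{trivial_det}.
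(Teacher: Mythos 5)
Your odd case is correct and is essentially the paper's own argument for $N$ odd (note $\dim V=N+2$, so your parity split by $\dim V$ coincides with the paper's split by $N$): where the paper identifies $V\otimes\det V$ with the top piece $F_{(N+1)/2}/F_{(N-1)/2}\simeq\bigwedge^{N+1}V$ of the filtration on $C^+(V)$, you realize it as the subspace $\omega\cdot V\subseteq C^+(V)$; both routes then invoke Lemma \ref{trivial_det} and Proposition \ref{AH_over_C}, and since $N=7$ for \cc surfaces this is indeed all that the application to Theorem B requires.

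The even case, however, contains a genuine gap, exactly at the step you flagged. When $\dim V$ is even, the volume element $\omega_V=e_1\cdots e_d$ of $V$ lies in $C^+(V)$ and is central there; inside $C^+(V'_\C)$, with $V'=V\oplus\Q e_{d+1}$, it commutes with $e_{d+1}$ and anticommutes with every $v\in V$, so conjugation by $\omega_V$ is a nontrivial algebra automorphism of $C^+(V'_\C)$: it is the identity on $C^+(V_\C)$ and acts by $-1$ on the summand $C^-(V_\C)\,e_{d+1}$. Since the monodromy of $\mathbb V'$ acts on $C^+(\mathbb V'_\C)$ through conjugation by elements of $\Spin(V_\C)\subseteq C^+(V_\C)^\times$, this automorphism is monodromy-invariant; hence the analogue of Lemma \ref{unique_map} for your auxiliary family $\mathcal B$ is \emph{false}, not merely in need of a new proof. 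Your Skolem--Noether argument fails at ``hence by a central element'': centralizing a Zariski-dense subgroup of $\Spin(V_\C)$ only forces the conjugating element to centralize the subalgebra it spans, namely $C^+(V_\C)$, and the centralizer of $C^+(V_\C)$ in $C^+(V'_\C)$ contains $\omega_V$, so it is strictly larger than the center. Without uniqueness, the paper's mechanism for Proposition \ref{AH_over_C} is unavailable for $\mathcal B$, and one cannot substitute the isogeny $\mathcal B_\sigma\sim\mathcal A_\sigma^2$ together with motivatedness of Hodge cycles on abelian varieties, because the problematic odd part of $C^+(V')$ contains precisely the copy of $V$ (surface cohomology, not abelian-variety cohomology) whose motivatedness is the point at issue; the same failure would also obstruct redoing the descent of Theorem \ref{AH_over_k} with $\mathcal B$ in place of $\mathcal A$, which your substitution of $\mathcal B_\sigma$ for $\mathcal A_\sigma$ would force. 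The paper's even case avoids all of this: it keeps $\mathcal A$, writes down an explicit $\SO(V,\theta)$-invariant embedding $\beta\colon V\hookrightarrow\End(C^+(V))$, proves $\underline\beta_\mu$ is motivated at the special point $\mu$ of (A3) by passing to the codimension-one subspace $W_\mu$ and its Kuga--Satake variety (related to $\mathcal A_\mu$ by isogeny, so that only Hodge cycles on abelian varieties and the already-established correspondence of Proposition \ref{AH_over_C} are needed), and then deforms back to $\sigma$ --- no uniqueness statement for any auxiliary family enters.
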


\begin{proof}
As in the previous proof, we have two separate cases:

\smallskip

\noindent \emph{$N$ odd}.  Following the proof of \cite[6.2.3]{Andre-KS}, one may form (as in the ``$N$ even'' portion of the proof of Lemma \ref{trivial_det}) the filtration $F_j$ on $C^+(V)$ given by the images of the spaces $\bigoplus_{i=0}^j V^{\otimes 2i}$.  The $\bigO(V,\theta)$-module $F_{(N+1)/2}/F_{(N-1)/2}$ (which is isomorphic as a vector space to $\bigwedge^{N+1} V$) is isomorphic to $V\otimes \det V$, and this allows one to (noncanonically) identify $\mathscr V_\C\otimes \det\mathscr V_\C$ as a submotive of $\enmot_B (\mathscr H^1(\mathcal A_\sigma))$.  But $\det \mathscr V_\C$ is the trivial motive by Lemma \ref{trivial_det}, giving the result in this case.

\smallskip

\noindent \emph{$N$ even.}  Arguing as in \cite[6.4.3]{Andre-KS}, one may show that there is a $\SO(V,\theta)$-invariant embedding $V\hookrightarrow \End(C^+(V))$.  (Note that $\GSpin(V)$ acts upon $C^+(V)$ by left multiplication and hence acts through its quotient $\SO(V,\theta)$ upon $\End(C^+(V))$; this is the intended action of $\SO(V,\theta)$ on the right side of the aforementioned embedding.)  Explicitly, let $d=N+2$, choose an orthogonal basis $\set{e_1,\ldots, e_{d}}$ of $V$, and choose a nonisotropic vector $v$: then one may check that
\[
\beta: \quad w \mapsto L_{we_1\ldots e_d} R_v
\]
gives an $\SO(V,\theta)$-invariant embedding $\beta: V\hookrightarrow \End(C^+(V))$, where $L_a$ and $R_a$ denote multiplication on the left and right by $a$, respectively.  (See the proof of \cite[6.4.3]{Andre-KS} for a more conceptual view of such an embedding.)  From the Kuga--Satake--Deligne construction, it follows that $\beta$ extends to an embedding of variations of Hodge structure
\[
\underline\beta: \mathbb V \hookrightarrow \underline{\End} (R^1 a^{\an}_\ast \Q).
\]
It suffices to show that $\beta = \underline\beta_\sigma$ is motivated and, by a deformation argument, this will in turn follow if one shows that $\underline{\beta}_\mu$, with $\mu\in\mathcal S(\C)$ as in (A3), is motivated.

Let $v^\ast$, $e_1^\ast, \ldots, e_d^\ast$ in $\mathbb V_\mu$ be the continuations of $v,e_1,\ldots, e_d$ in $V=\mathbb V_\sigma$ along some chosen homotopy class of paths from $\sigma$ to $\mu$, so that
\[
\underline{\beta}_\mu: \quad w \mapsto L_{we_1^\ast\ldots e_d^\ast} R_{v^\ast}.
\]
(Note that $v^\ast$, $e_1^\ast, \ldots, e_d^\ast$ depend upon the choice of path, but $\underline{\beta}_\mu$ does not.)  We may suppose without loss of generality that $e_d^\ast$ is algebraic and that $\mathbb V_\mu = W_\mu\oplus \Q e_d^\ast$.  Let us write three maps:
\begin{eqnarray*}
\delta_1: && \mathbb V_\mu = W_\mu\oplus \Q e_d^\ast \To C^+(W_\mu)\oplus \Q e_d^\ast \\
&& (w,\alpha e_d^\ast) \longmapsto (we_1^\ast \ldots e_{d-1}^\ast ,\alpha e_d^\ast) \\
\delta_2: && C^+(W_\mu)\oplus \Q e_d^\ast \To \End (C^+(W_\mu)) \oplus \Q e_d^\ast \simeq \End (H^1(\KS(W_\mu),\Q)) \oplus \Q e_d^\ast \\
 && (a,\alpha e_d^\ast) \longmapsto (L_a,\alpha e_d^\ast) \\
\delta_3: && \End(C^+(W_\mu)) \oplus \Q e_d^\ast \To \End(C^+(\mathbb V_\mu)) \simeq \End (H^1(\mathcal A_u,\Q)) \\
&& (\varphi, \alpha e_d^\ast) \longmapsto (\varphi+\alpha L_{e_d^\ast}) L_{e_1^\ast\ldots e_d^\ast} R_{v^\ast} 
\end{eqnarray*}
Then one may check that $\underline\beta_\mu = \delta_3\delta_2\delta_1$.  The map $\delta_1$ is essentially the composition $W_\mu\to W_\mu\otimes \det W_\mu \to C^+(W_\mu)$, which is motivated: the first of these is motivated since $\det W_\mu$ is motivically trivial by Lemma \ref{trivial_det}, and the second is motivated following an argument similar to the case of ``$N$ odd" above.  The map $\delta_2$ is in fact $v_\mu\oplus\text{Id}$ (with $v_\mu$ as in (\ref{mu_isom})), hence is motivated.  Finally, one may verify that $\delta_3$ is a morphism of $\SO(W_\mu)$-modules, and hence gives a Hodge correspondence between $\End (H^1(\KS(W_\mu),\Q)) \oplus \Q e_d^\ast$ and $\End (H^1(\mathcal A_u,\Q))$; since Hodge cycles on abelian varieties are motivated, it follows that $\delta_3$ is motivated.  Therefore $\underline\beta_\mu$ is motivated, as desired.
\end{proof}

\begin{thm}\label{AH_over_k}
The abelian variety $\mathcal A_{\sigma}$ has a model $A$ over $k_0$ such that the inclusion $\nu_\sigma: B\hookrightarrow \End(\mathcal A_\sigma)$ descends to $B\hookrightarrow \End_{k_0}(A)$.  Furthermore, the motivated correspondence $\gamma$ in (\ref{AH_gamma_C}) descends to a motivated correspondence over $k_0$.
\end{thm}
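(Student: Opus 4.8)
The plan is to treat the two assertions separately: first descend the Kuga--Satake abelian variety $\mathcal A_\sigma$, together with a polarization and the $B$-action $\nu_\sigma$, to a finite extension of $k_0$ (which yields the model $A$ and the descent of $\nu_\sigma$), and then descend the motivated correspondence $\gamma$ of (\ref{AH_gamma_C}). Throughout we use freely that $k_0$ may be replaced by a finite extension. The whole argument runs as in \cite[\S6.3]{Andre-KS} (which in turn follows \cite[\S6]{Del-K3}); we indicate the shape of it and where the hypotheses of the present setting enter.

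First I would descend $\mathcal A_\sigma$, equipped with a polarization, the $B$-action, and a full level structure, by the conjugation argument, whose essential input is that the Kuga--Satake package is \emph{rigid}: by Lemma \ref{unique_map}, and by its $\ell$-adic analogue proved by the same argument, the isomorphism $u$ of (\ref{u_isom}) is the \emph{unique} one of its kind, a uniqueness that rests precisely on the Zariski-density of the monodromy of $\mathbb V$ in $\SO(V,\theta)$ supplied by Axiom (A4). Concretely, I would view $\mathcal A_\sigma$ with its structures as a $\C$-point of a quasi-projective moduli scheme of polarized abelian varieties with $B$-action and level structure, defined over a number field which we may assume contained in $k_0$. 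Since $\mathcal S$, the family $\pi$, and the divisors $\mathcal D_i$ are defined over $k_0$, every $\tau\in\Aut(\C/k_0)$ fixes $\mathcal S_\C$ together with the \'etale $\Q_\ell$-sheaf on $\mathcal S_\C$ deduced from $\mathbb V$; applying $\tau$ to $a\colon\mathcal A\to\mathcal S_\C$ and to $u$, comparing Betti with $\ell$-adic cohomology, and invoking the $\ell$-adic rigidity of $u$ identifies the sheaf of $B\otimes\Q_\ell$-modules $R^1(a^\tau)_\ast\Q_\ell$ with $R^1 a_\ast\Q_\ell$ on $\mathcal S_\C$. By Faltings' isogeny theorem \cite{Fal}, applied at a closed point and then spread out over the base, this forces a $B$-equivariant isogeny $\mathcal A^\tau\sim\mathcal A$ and, after enlarging the level structure, compatible isomorphisms of the associated polarized abelian varieties over the fibre $\sigma$. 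Hence the moduli point determined by $\mathcal A_\sigma$ has finite $\Aut(\C/k_0)$-orbit, so it, and with it $\mathcal A_\sigma$, its polarization, and the $B$-action, is defined over a finite extension of $k_0$. This produces $A/k_0$ and $B\hookrightarrow\End_{k_0}(A)$ inducing $\nu_\sigma$ after base change to $\C$, which is the first assertion.

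Then I would descend $\gamma\colon V\hookrightarrow\End(H^1(\mathcal A_\sigma,\Q))$, motivated over $\C$ by Proposition \ref{AH_embed}. The point is that, as built in the proof of Proposition \ref{AH_embed}, $\gamma$ is assembled from the Clifford isomorphism $u_\sigma$ of (\ref{u_isom_fiber}), the cup-product form $\theta$, the $B$-action, and the motivically trivial line $\det\mathscr V_\C$ of Lemma \ref{trivial_det}, by morphisms of $\SO$-modules defined over $\Q$. Once $A$ is a model over $k_0$ with $B\hookrightarrow\End_{k_0}(A)$, each ingredient is motivated over $k_0$ after a finite extension: for $u_\sigma$ one re-runs the uniqueness argument, noting that its $\ell$-adic realization, being the unique isomorphism $C^+(V_{\Q_\ell})\tilde\To\End_B(H^1(A_k,\Q_\ell))$, is automatically $\Gal(k/k_0)$-equivariant; and $\det\mathscr V_\C$ remains motivically trivial over a finite extension of $k_0$, the Hodge-group argument of Lemma \ref{trivial_det} being insensitive to the field of definition. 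Thus $\gamma$ has finite $\Gal(k/k_0)$-orbit, hence is Galois-invariant after a finite extension of $k_0$, and therefore motivated over $k_0$ by Andr\'e's Galois-descent property for motivated cycles \cite{Andre-Mot}.

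I expect the main obstacle to be the descent of the Kuga--Satake variety itself (the second paragraph above): since $\tau\in\Aut(\C/k_0)$ is not continuous and does not transport the Hodge-theoretic or integral data, one must argue entirely through \'etale realizations and Faltings' theorem and then verify the compatibilities that make the descent effective. This is exactly where Axiom (A4) does its work, the $\ell$-adic rigidity of $u$ substituting for the hypothesis---available in \cite{Andre-KS} but not in our situation---that the period map of $\mathbb V$ has open image in the period domain.
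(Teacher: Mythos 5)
Your route for the first assertion (conjugation of the whole family by $\tau\in\Aut(\C/k_0)$, rigidity of $u$, Faltings, finiteness of the moduli orbit) differs from the paper's, but its central step has a genuine gap. First, Lemma \ref{unique_map} pins down only the isomorphism of sheaves of \emph{algebras} $C^+(\mathbb V_R)\simeq \underline{\End}_B(R^1a^{\an}_\ast R)$; applying $\tau$ and using that $\mathbb V_\ell$ comes from $k_0$ therefore identifies $\underline{\End}_B\bigl(R^1(a^\tau)_\ast\Q_\ell\bigr)$ with $\underline{\End}_B\bigl(R^1a_\ast\Q_\ell\bigr)$, not the $H^1$-sheaves themselves; passing from the endomorphism algebras to the modules needs the extra invertible (rank-one) module argument of Deligne, which you do not supply. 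Second, and more seriously, even granting an isomorphism $R^1(a^\tau)_\ast\Q_\ell\simeq R^1a_\ast\Q_\ell$ of $\Q_\ell$-local systems on $\mathcal S_\C$, that isomorphism is equivariant only for the topological fundamental group, and this is not enough to produce an isogeny: Faltings' theorem requires the two abelian varieties to live over a common finitely generated field with a \emph{Galois}-equivariant isomorphism of Tate modules, and geometric monodromy equivariance alone fails to imply isogeny (constant abelian schemes over $\mathcal S_\C$ already give non-isogenous families with isomorphic $\Q_\ell$-local systems). Supplying the missing arithmetic equivariance is exactly the hard point, and it is what the paper's proof does: spread $(\mathcal A,a,\nu,u)$ out over a $k_0$-variety $T$, pass to the generic fibre over $k_0(T)$, and use the uniqueness statement of Lemma \ref{unique_map} (i.e.\ Axiom (A4)) to show that $u_{\ell,\sigma}$, being fixed by the geometric \'etale fundamental group and merely permuted among such isomorphisms by the arithmetic one, is in fact fixed by $\pi_1(\mathcal S_{k_0(T)},\sigma)$. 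Once one has this, no appeal to Faltings is needed at all: $u_\ell$ spreads over $T$, and since $C^+(\mathbb V)$ is constant along $s\times_{k_0}T$, the abelian scheme is essentially constant there, so its fibre at a $k_0$-point of $s\times_{k_0}T$ already gives the model $A$ with its $B$-action (Andr\'e's Lemma 5.5.1). As written, your proposal assumes this crucial equivariance rather than proving it (and the phrase ``applied at a closed point and then spread out'' has the logic backwards); moreover, even with an isogeny $\mathcal A^\tau\sim\mathcal A$ in hand, finiteness of the moduli orbit needs an additional countability argument, since an isogeny class contains infinitely many isomorphism classes.

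The second half of your proposal (descent of $\gamma$) is essentially sound and close to the paper, though more elaborate than necessary: once $A$ and $B\hookrightarrow\End_{k_0}(A)$ are in place, the correspondence $\gamma$ of Proposition \ref{AH_embed}, being motivated over $\C$ between realizations of motives defined over $k$, descends to $k$ by invariance of motivated cycles under extension of algebraically closed fields, and hence is defined over a finite extension of $k_0$; you do not need to re-derive Galois equivariance of $u_{\ell,\sigma}$ or the motivic triviality of $\det\mathscr V$ over $k_0$ for this step.
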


\begin{proof}
We follow \cite[\S5.5]{Andre-KS}, which in turn is a stronger version of \cite[Prop.\ 6.5]{Del-K3}.  Without reproducing the entire proof, we take care to explain the role played by (A4), the density of the monodromy.

Consider the collection $\mathcal C_1=(\mathcal S,\mathcal X,\pi,\mathcal A,a, \nu)$; as this collection is defined by a finite number of equations, there is  (after perhaps replacing $k_0$ by a finite extension) a smooth connected variety $T$ over $k_0$ such that $\mathcal C_1$ descends to a collection $\mathcal C_2=(\mathcal S_2,\mathcal X_2,\pi_2, \mathcal A_2, a_2, \nu_2)$ over the function field $k_0(T)$ of $T$.  Note that as $\pi:\mathcal X\to\mathcal S$ is defined over $k_0$, $\pi_2:\mathcal X_2\to\mathcal S_2$ is obtained simply by base change to $k_0(T)$:
\[
\mathcal S_2=\mathcal S_{k_0(T)}, \quad \mathcal X_2=\mathcal X_{k_0(T)}, \quad \pi_2=\pi_{k_0(T)}.
\]
In fact, upon replacing $T$ if necessary, the collection $\mathcal C_2$ is the generic fiber of a collection $\mathcal C_3=(\mathcal S_3,\mathcal X_3,\pi_3,  \mathcal A_3, a_3, \nu_3)$ defined over all of $T$.   Just as before, the first three objects in $\mathcal C_3$ are obtained by base change to $T$:
\[
\mathcal S_3=\mathcal S_T, \quad \mathcal X_3=\mathcal X_T, \quad \pi_2=\pi_T.
\]

To show the existence of $A$, the main point is to produce an isomorphism of local systems of rings over $(\mathcal S_3)_\C$ that is similar to $u_\Z$ in (\ref{isom_of_rings}).  This is not automatic, since the transcendental isomorphism $u_\Z$ does not ``spread'' to $T$ along with the collection $\mathcal C_1$.  To obtain this we use (A4), in the guise of Lemma \ref{unique_map}.

For a prime number $\ell$, one tensors $u_\Z$ with $\Z_\ell$ and uses comparison to obtain an isomorphism of $\Z_\ell$-sheaves of algebras
\[
u_\ell: C^+({\mathbb V}_\ell) \tilde{\To} \underline{\End}_B (R^1 a_{\ast} \Z_\ell) 
\]
in the \'etale topology on $\mathcal S_\C$; here ${\mathbb V}_\ell$ is the subsheaf of the $\Z_\ell$-sheaf $R^2 (\pi_\C)_\ast \Z_\ell(1)$ obtained as the complement of the global sections arising from the algebraic classes $\Omega$ in (A2) (i.e., the construction is exactly similar to that of $\mathbb V_\Z$).  Then $u_\ell$ automatically descends to an isomorphism of \'etale sheaves over $\bar{\mathcal S}_2:=\mathcal S_2\otimes_{k_0(T)} \overline{k_0(T)}$, where $\overline{k_0(T)}$ is the algebraic closure of $k_0(T)$.  This means that the map on fibers
\begin{equation}\label{tau_isom}
u_{\ell,\sigma}\colon C^+({\mathbb V}_{\ell,\sigma}) \tilde\To \End(H^1(\mathcal A_{2,\sigma},\Z_\ell))
\end{equation}
is invariant under the action of the geometric \'etale fundamental group $\pi_1(\bar{\mathcal S}_2,\sigma)$.  But, after choosing a ring embedding $\Z_\ell\hookrightarrow \C$, it follows from Lemma \ref{unique_map} that (\ref{tau_isom}) is the unique isomorphism of $\Z_\ell$-algebras between $C^+({\mathbb V}_{\ell,\sigma})$ and $\End(H^1(\mathcal A_{2,\sigma}),\Z_\ell))$ that is invariant under $\pi_1(\bar{\mathcal S}_2,\sigma)$.  Since an element of the arithmetic \'etale fundamental group $\pi_1(\mathcal S_2,\sigma)$ sends such an isomorphism to another, it must also fix (\ref{tau_isom}), allowing one to conclude that $u_\ell$ in fact descends to an isomorphism of \'etale sheaves over $\mathcal S_2$ itself.

Hence, after perhaps replacing $T$ again, $u_\ell$ is the generic fiber of an isomorphism of \'etale sheaves over $T$ that, by comparison, yields an isomorphism of local systems of rings 
\begin{equation}\label{analytic_isom_over_T}
C^+(\mathbb V_T)\tilde{\To} \underline{\End}_B(R^1 a_3^{\an}\Z)
\end{equation}
 in the analytic site on $(\mathcal S_3)_\C=\mathcal S_\C\times_\C T_\C$; here $\mathbb V_T$ denotes the pullback of $\mathbb V$ from $\mathcal S_\C$ to $\mathcal S_\C\times_\C T_\C$.  Then, as in \cite[Lemma 5.5.1]{Andre-KS}, one uses (\ref{analytic_isom_over_T}) to prove that any specialization of the abelian scheme $\mathcal A_3\to S_T=S\times_{k_0} T$ to a point along the subvariety $s\times_{k_0} T$ (with $s\in\mathcal S(k_0)$ as in (A1)) in fact gives a model for $(\mathcal A_{\sigma},\nu_\sigma)$ over the residue field of that point.  In particular, choosing a $k_0$-valued point of $s\times _{k_0}T$, we get a model $A$ for $\mathcal A_{\sigma}$ over $k_0$ for which the endomorphisms $B\hookrightarrow \End(\mathcal A_\sigma)$ descend to endomorphisms $B\hookrightarrow \End_{k_0}(A)$, completing the first part of the theorem.

Having established this, one considers the motive $\enmot(\mathscr{H}^1(A))$ over $k_0$, whose Betti realization is $\End(H^1(A_\C,\Q)) = \End(H^1(\mathcal A_\sigma,\Q))$.  Proposition \ref{AH_embed} gives an embedding $\gamma$ of $V$ into $\End(H^1(A_\C,\Q))$ which is motivated over $\C$.  This descends automatically to a motivated embedding over the algebraic closure $k$ of $k_0$, and hence to a finite extension of $k_0$, which we may assume to be $k_0$ itself.  Thus one has an embedding
\begin{equation}\label{AH_gamma}
\gamma_0: \mathscr V \hookrightarrow \enmot(\mathscr{H}^1(A))
\end{equation}
of motives over $k_0$.
\end{proof}

\subsection{}\label{ThmB_subsec}

Theorem B is now a special case of the following result:

\begin{thm}\label{general_Tate}
In the axiomatic setting of \S\ref{axiom_subsec}, fix a prime number $\ell$ and consider the $\ell$-adic representation
\[
r_\ell:\Gal(k/k_0)\to \Aut\bigl( H^2(X_{k},\Q_\ell)(1) \bigr).
\]
Then the following hold:
\begin{itemize}
\item[(i)] The representation $r_\ell$ is semisimple.
\item[(ii)]  (Tate Conjecture) Let $V_{\alg}$ be the $\Q_\ell$-subspace generated by the image of the cycle class map
\[
c_\ell:\CH^1(X_{k})\to H^2(X_{k},\Q_\ell)(1).
\]
Then $V_{\alg}$ is exactly the subspace of elements in $H^2(X_{k},\Q_\ell)(1)$ that are fixed by an open subgroup of $\Gal(k/k_0)$.
\end{itemize}
\end{thm}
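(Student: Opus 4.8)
\medskip

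\noindent\emph{Proof plan.} The plan is to deduce both assertions from the corresponding (known) facts for the abelian variety $A/k_0$ furnished by Theorem~\ref{AH_over_k}, using the motive decomposition (\ref{motive_decomp}) and the motivated embedding $\gamma_0\colon \mathscr V\hookrightarrow \enmot(\mathscr H^1(A))$ of (\ref{AH_gamma}). First I would pass to $\ell$-adic realizations. Since (\ref{motive_decomp}) is a decomposition of motives over $k_0$, it gives a $\Gal(k/k_0)$-equivariant splitting $H^2(X_k,\Q_\ell)(1)=\Omega_{\Q_\ell}\oplus V_{\Q_\ell}$, where $\Omega_{\Q_\ell}$ is the $\Q_\ell$-span of the (algebraic) classes of $D_1,\dots,D_m$, on which $\Gal(k/k_0)$ acts trivially because these divisors are defined over $k_0$, and $V_{\Q_\ell}$ is the $\ell$-adic realization of $\mathscr V$. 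Likewise, since $\gamma_0$ is a morphism of motives over $k_0$ — equivalently a motivated, hence absolute Hodge, correspondence over $k_0$ — its $\ell$-adic realization is a $\Gal(k/k_0)$-equivariant injection $\gamma_\ell\colon V_{\Q_\ell}\hookrightarrow \End\bigl(H^1(A_k,\Q_\ell)\bigr)$, which under the Artin comparison isomorphisms is identified with $\gamma\otimes_\Q\Q_\ell$, where $\gamma$ is the Betti realization (\ref{AH_gamma_C}), a morphism of rational Hodge structures. In this way both (i) and (ii) are reduced to statements about the single Galois module $V_{\Q_\ell}$.

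For part (i), the external input is Faltings' theorem \cite{Fal,Fal-Wus}: $H^1(A_k,\Q_\ell)$ is a semisimple $\Gal(k/k_0)$-module. Hence the Zariski closure $G_\ell$ of the image of $\Gal(k/k_0)$ in $\GL\bigl(H^1(A_k,\Q_\ell)\bigr)$ is reductive — a linear algebraic group in characteristic zero with a faithful semisimple representation has trivial unipotent radical — so all of its finite-dimensional representations, in particular $\End\bigl(H^1(A_k,\Q_\ell)\bigr)$, are semisimple. Therefore the Galois submodule $\gamma_\ell(V_{\Q_\ell})\cong V_{\Q_\ell}$ is semisimple, and together with the trivial module $\Omega_{\Q_\ell}$ this shows $H^2(X_k,\Q_\ell)(1)$ is semisimple. (Semisimplicity is unaffected by replacing $k_0$ by a finite extension, since this leaves the identity component of $G_\ell$ unchanged.)

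For part (ii), the inclusion of $V_{\alg}$ in the union $\bigcup_U H^2(X_k,\Q_\ell)(1)^U$ over open subgroups $U$ of $\Gal(k/k_0)$ is clear, as $V_{\alg}$ is finite-dimensional and hence spanned by the classes of finitely many divisors, all defined over one finite extension of $k_0$. For the reverse inclusion I would argue by a dimension count. Because $H^2(X_k,\Q_\ell)(1)$ is finite-dimensional, the spaces of $\Gal(k/k_1)$-invariants stabilize for $k_1$ running over sufficiently large finite extensions of $k_0$, and their common value is $\bigcup_U H^2(X_k,\Q_\ell)(1)^U$; so after replacing $k_0$ by a finite extension over which, moreover, all endomorphisms of $A$ and finitely many divisors spanning $V_{\alg}$ are defined, it suffices to prove $V_{\Q_\ell}^{\Gal(k/k_0)}= V_{\alg}\cap V_{\Q_\ell}$ (both $H^2(X_k,\Q_\ell)(1)^{\Gal(k/k_0)}$ and $V_{\alg}$ decomposing as $\Omega_{\Q_\ell}$ plus their intersection with $V_{\Q_\ell}$). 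On the $\ell$-adic side, $\gamma_\ell$ identifies $V_{\Q_\ell}^{\Gal(k/k_0)}$ with $\gamma_\ell(V_{\Q_\ell})\cap \End\bigl(H^1(A_k,\Q_\ell)\bigr)^{\Gal(k/k_0)}$, and the latter intersectand equals $\End_{k_0}(A)\otimes\Q_\ell$ by Faltings' theorem on endomorphisms. On the Betti side, $\gamma$ being a morphism of Hodge structures identifies the space of Hodge classes $V^{(0,0)}$ with $\gamma(V)\cap \End\bigl(H^1(A_\C,\Q)\bigr)^{(0,0)}=\gamma(V)\cap\bigl(\End_\C(A)\otimes\Q\bigr)$, using the classical identification of the Hodge endomorphisms of $H^1(A_\C,\Q)$ with $\End_\C(A)\otimes\Q$. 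Since $\gamma_\ell=\gamma\otimes\Q_\ell$, since $\End_{k_0}(A)=\End_\C(A)$ after our enlargement of $k_0$, and since intersection of $\Q$-subspaces is compatible with the field extension $\Q\hookrightarrow\Q_\ell$, comparing these two descriptions gives $\dim_{\Q_\ell}V_{\Q_\ell}^{\Gal(k/k_0)}=\dim_\Q V^{(0,0)}$. By the Lefschetz $(1,1)$ theorem every Hodge class in $V\subseteq H^2(X_\C,\Q)(1)$ is a divisor class, and since the N\'eron--Severi group is unchanged under extension of algebraically closed fields of characteristic zero, all such classes already lie in $V_{\alg}$; hence $\dim_{\Q_\ell}(V_{\alg}\cap V_{\Q_\ell})\ge\dim_\Q V^{(0,0)}=\dim_{\Q_\ell}V_{\Q_\ell}^{\Gal(k/k_0)}$. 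Combined with the automatic inclusion $V_{\alg}\cap V_{\Q_\ell}\subseteq V_{\Q_\ell}^{\Gal(k/k_0)}$ (the spanning divisors now being defined over $k_0$), this forces equality and finishes the proof.

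The genuinely substantive work is already packaged into Theorem~\ref{AH_over_k} — the descent of the Kuga--Satake abelian variety and the motivatedness of $\gamma_0$ over $k_0$ — and it is there, via Lemma~\ref{unique_map}, that the density statement (A4) is used. Beyond that and the black boxes of Faltings and Lefschetz $(1,1)$, the only real delicacy is the bookkeeping of realizations: one must keep the Betti and $\ell$-adic pictures in lockstep — so that under the comparison isomorphisms $\gamma_\ell$ literally is $\gamma\otimes\Q_\ell$, and so that ``$\Gal$-invariants'' on the $\ell$-adic side correspond to ``$(0,0)$-classes'' on the Betti side after intersecting with the respective copies of $\End_\C(A)\otimes\Q$ — and verify that the repeated (harmless) enlargements of $k_0$ do not disturb the earlier constructions. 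That compatibility is exactly what the absolute-Hodge / motivated nature of $\gamma_0$ provides.
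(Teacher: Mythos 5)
Your proposal is correct and takes essentially the same route as the paper: reduce to the motive $\mathscr V$ via the decomposition (\ref{motive_decomp}), use the motivated embedding $\gamma_0$ from Theorem \ref{AH_over_k} together with Faltings' semisimplicity and endomorphism/isogeny theorems, keep the Betti and $\ell$-adic realizations in lockstep through the comparison diagram, and finish with the Lefschetz $(1,1)$ theorem. The only difference is cosmetic: where the paper chases an individual Galois-invariant class through the diagram (\ref{comp_diag}), you conclude part (ii) by the equivalent dimension count identifying $V_{\Q_\ell}^{\Gal(k/k_0)}$ with the space of rational Hodge classes in $V$ after a harmless enlargement of $k_0$.
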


\begin{proof}
As noted earlier, it suffices to prove both of the statements after replacing $k_0$ by a finite extension, and so we may assume the results of \S\ref{KS_subsec}.

The Galois representation $H^2(X_{k},\Q_\ell)(1)$ is the $\ell$-adic realization of the motive $\mathscr H^2(X)$.  Letting $V_\ell$ denote the $\ell$-adic realization of $\mathscr V$, the decomposition (\ref{motive_decomp}) shows that the $\Gal(k/k_0)$-module $H^2(X_{k},\Q_\ell)(1)$ is the direct sum of $V_\ell$ and $m$ copies of the trivial representation.  The truth of part (i) of the theorem now follows from the semisimplicity of $V_\ell$, which in turn follows from taking the $\ell$-adic realization of the motivated correspondence $\gamma_0$ (\ref{AH_gamma}) proved in Theorem \ref{AH_over_k} and using Faltings' proof of the semisimplicity conjecture for abelian varieties \cite{Fal, Fal-Wus}.

For part (ii), one must show the elements of $V_\ell$ fixed by an open subgroup of $\Gal(k/k_0)$ are $\Q_\ell$-combinations of algebraic (i.e., divisor) classes.  First let us set up notation.  For a smooth projective variety $Y$ over $k_0$, let $i_\ell: H^\ast(Y_\C,\Q)\otimes_\Q \Q_\ell \tilde\To H^\ast(Y_k,\Q_\ell)$ be the canonical comparison $\Q_\ell$-isomorphism obtained from our fixed embedding $k\hookrightarrow \C$.  Then the motivated embedding $\gamma_0$ over $k_0$ in (\ref{AH_gamma}) yields a commutative diagram
\begin{equation}\label{comp_diag}
\xymatrixcolsep{5pc}
\xymatrix{
V \otimes_\Q \Q_\ell \ar[d]_{i_\ell} \ar@{^{(}->}[r]^-{\gamma \otimes 1} & \End (H^1(A_\C,\Q))\otimes_\Q \Q_\ell \ar[d]_{i_\ell} \\
V_\ell \ar@{^{(}->}[r]^-{\gamma_\ell} & \End (H^1(A_k,\Q_\ell)), \\
}
\end{equation}
in which the bottom arrow $\gamma_\ell$ is an embedding of $\Gal(k/k_0)$-modules.

Now suppose that $v_\ell\in V_\ell$ is fixed by an open subgroup of $\Gal(k/k_0)$; then so is $\gamma_\ell(v_\ell)$ and so, by Faltings' proof of the isogeny conjecture \cite{Fal, Fal-Wus}, $\gamma_\ell(v_\ell)$ arises from a $\Q_\ell$-combination of isogenies of $A_k$.  It then follows that $(\gamma\otimes 1)(i_\ell^{-1}(v_\ell)) = i_\ell^{-1}(\gamma_\ell(v_\ell))$ is a $\Q_\ell$-combination of Hodge classes.  
But as $\gamma$ is a Hodge correspondence, any $\Q_\ell$-combination of Hodge classes in the upper right of (\ref{comp_diag}) that lies in the image of $\gamma\otimes 1$ necessarily comes from such a combination in  the upper left.  Thus $i_\ell^{-1}(v_\ell)$ is a $\Q_\ell$-combination of Hodge classes, and hence by the Lefschetz (1,1) Theorem is a $\Q_\ell$-combination of divisor classes.  Therefore $v_\ell$ is as well.
\end{proof}

\bibliography{CC_arith_published}{}
\bibliographystyle{math}

\end{document}